\newcommand{\remove}[1]{}
\begin{document}


\RUNAUTHOR{Dey et al.}

\RUNTITLE{Dual bounds for sparse PCA}

\TITLE{Using $\ell_1$-relaxation and integer programming to obtain dual bounds for sparse PCA }

\ARTICLEAUTHORS{%
\AUTHOR{Santanu S. Dey}
\AFF{School of Industrial and Systems Engineering, Georgia Institute of Technology, \EMAIL{santanu.dey@isye.gatech.edu}} 
\AUTHOR{Rahul Mazumder}
\AFF{Operations Research Center, Massachusetts Institute of Technology, \EMAIL{rahulmaz@mit.edu}}
\AUTHOR{Guanyi Wang}
\AFF{School of Industrial and Systems Engineering, Georgia Institute of Technology, \EMAIL{gwang93@gatech.edu}}
} 

\ABSTRACT{%
Principal component analysis (PCA) is one of the most widely used dimensionality reduction tools in scientific data analysis.  The PCA direction, given by the leading eigenvector of a covariance matrix, is a linear combination of all features with nonzero loadings---this impedes interpretability. Sparse principal component analysis (SPCA) is a framework that enhances interpretability by incorporating an additional sparsity requirement in the feature weights (factor loadings) while finding a direction that explains the maximal variation in the data. However, unlike PCA, the optimization problem associated with the SPCA problem is NP-hard. Most conventional methods for solving SPCA are heuristics with no guarantees such as certificates of optimality on the solution-quality via associated dual bounds. Dual bounds are available via standard semidefinite programming (SDP) based relaxations, which may not be tight and the SDPs are difficult to scale using off-the-shelf solvers. In this paper, we present a convex integer programming (IP) framework to derive dual bounds. At the heart of our approach is the so-called $\ell_1$-relaxation of SPCA. While the $\ell_1$-relaxation leads to convex optimization problems for $\ell_0$-sparse linear regression and relatives; it results in a non-convex optimization problem for the PCA problem. We first show that the $\ell_1$-relaxation gives tight multiplicative bound on SPCA. Then we show how to use standard integer programming techniques to further relax the $\ell_1$-relaxation into a convex IP, for which there are good commercial solvers. We present worst-case results on the quality of the dual bound provided by the convex IP. We empirically observe that the dual bounds are significantly better than worst-case performance, and are superior to the SDP bounds on some real-life instances. Moreover, solving the convex IP model using commercial IP solvers appears to scale much better than solving the SDP-relaxation using commercial solvers. To the best of our knowledge, we obtain the best dual bounds for real and artificial instances for SPCA problems involving covariance matrices of size up to $2000\times 2000$.
}
%


\KEYWORDS{$\ell_1$ relaxation, Dual bounds, Sparse principal component analysis} 

\maketitle

%


\section{Introduction}
Principal component analysis (PCA) is one of the most widely used dimensionality reduction methods in data science. Given a data matrix $Y \in \mathbb{R}^{m \times n}$ (with $m$ samples and $n$ features; and each feature is centered to have zero mean), PCA seeks to find a principal component (PC) direction $x \in \mathbb{R}^n$ with $\|x\|_2 = 1$ that maximizes the variance of a weighted combination of features. Formally, this PC direction can be found by solving
\begin{align}
	\max_{\|x\|_2 = 1} x^{\top} A x \tag{PCA} \label{eq:PCA}
\end{align}
where $A \triangleq \frac{1}{m} Y^{\top} Y$ is the sample covariance matrix.
An obvious drawback of PCA is that all the entries of $\hat{{x}}$ (an optimal solution to~\eqref{eq:PCA}) are (usually) nonzero, which leads to the PC direction being a linear combination of all features -- this impedes interpretability~\cite{cadima1995loading,jolliffe2003modified,zou2006sparse}. In biomedical applications for example, when ${Y}$ corresponds to the gene-expression measurements for different samples, it is desirable to obtain a PC direction which involves only a handful of the features (e.g, genes) for interpretation purposes. In financial applications (e.g, $A$ may denote a covariance matrix of stock-returns), a sparse subset of stocks that are responsible for driving the first PC direction may be desirable for interpretation purposes.  Indeed, in many scientific and industrial applications~\cite{allen2011sparse,hastie2015statistical,wht_09},
for additional interpretability, it is desirable for the factor loadings to be sparse, i.e., few of the entries in $\hat{x}$ are nonzero and the rest are zero. This motivates the notion of a sparse principal component analysis (SPCA)~\cite{hastie2015statistical,jolliffe2003modified}, wherein, in addition to maximizing the variance, one also desires the direction of the first PC to be sparse in the factor loadings. The most natural optimization formulation of this problem, modifies criterion~\ref{eq:PCA} with an additional sparsity constraint on $x$ leading to:
\begin{align}
	\lambda^k(A) \triangleq \max_{\|x\|_2 = 1, \|x\|_0 \leq k} x^{\top} A x \tag{SPCA} \label{eq:SPCA}
\end{align}
where $\|x\|_0 \leq k$, is equivalent to allowing at most $k$ components of $x$ to be nonzero. Unlike the \ref{eq:PCA} problem, the \ref{eq:SPCA} problem is NP-hard~\cite{chan2015worst, magdon2017np}.

Many heuristic algorithms have been proposed in the literature that use greedy methods~\cite{jolliffe2003modified,zhang2002low,he2011algorithm,d2008optimal},  alternating methods~\cite{yuan2013truncated} and the related power methods~\cite{journee2010generalized}. However, conditions under which (some of) these computationally friendlier methods can be shown to work well, make very strong and often unverifiable assumptions on the problem data. Therefore, the performance of these heuristics (in terms of how close they are to an optimal solution of the ~\ref{eq:SPCA} problem) on a given dataset is not clear. 

Since \ref{eq:SPCA} is NP-hard, there has been exciting work in the statistics community~\cite{berthet2013optimal,wang2016statistical} in understanding the statistical properties of convex relaxations (e.g., those proposed by~\cite{jordan_07} and variants)  of \ref{eq:SPCA}. It has been established~\cite{berthet2013optimal,wang2016statistical} that the statistical performance of estimators available from convex relaxations are sub-optimal (under suitable modeling assumptions) when compared to estimators obtained by (optimally) solving~\ref{eq:SPCA}---this further underlines the importance of creating tools to be able to solve \ref{eq:SPCA} to optimality. 

Our main goal in this paper is to propose an integer programming framework that allows the computation of certificates of optimality via dual bounds, which make limited restrictive/unverifiable assumptions on the data. Dual bounds can also translate into suitable guarantees for statistical performance of the estimator---see for example,~\cite{mazumder2017discrete}[Theorem 4] for results pertaining to approximate solutions for sparse regression settings\footnote{In~\cite{mazumder2017discrete}, estimators with certificates on dual bounds translate to simple modifications of error bounds that correspond to the global solution of the original nonconvex estimator.}.
To the best of our knowledge, the only published methods for obtaining dual bounds of \ref{eq:SPCA} are based on semidefinite programming (SDP) relaxations~\cite{d2005direct,d2014approximation,d2008optimal,zhang2012sparse} (see Appendix \ref{sec:appsdp} for the SDP relaxation) and spectral methods involving a low-rank approximation of the matrix $A$~\cite{papailiopoulos2013sparse}. Both these approaches however, have some limitations. 
The SDP relaxation does not appear to scale easily (using off-the-shelf solver {\color{black} Mosek 8.0.0.60}) for matrices with more than a few hundred rows/columns, while applications can be significantly larger.  Indeed, even a relatively recent implementation based on the Alternating Direction Method of Multipliers for solving the SDP considers instances with  $n\approx 200$~\cite{Ma2013}.  
The spectral methods involving a low-rank approximation of $A$ proposed in~\cite{papailiopoulos2013sparse} have a running time of $\mathcal{O}(n^d)$ where $d$ is the rank of the matrix---in order to scale to large instances, no more than a rank $2$ approximation of the original matrix seems possible. The paper \cite{bertsimas_berk_2016} presents a specialized branch and bound solver\footnote{This paper is not available in the public domain at the time of writing this paper.} to obtain solutions to the SPCA problem, but their method can handle problems with $n\approx 100$ -- the approach presented here is different, and our proposal scales to problem instances that are much larger. 
	
	The methods proposed here are able to obtain approximate dual bounds of \ref{eq:SPCA} by solving convex integer programs and a related perturbed version of convex integer programs that are easier to solve. The dual bounds we obtain are incomparable to dual bounds based on the SDP relaxation, i.e. neither dominates the other, and the method appears to scale well to matrices up to sizes of $2000 \times 2000$.

\section{Main results}
In this paper, we use upper case letters such as $A, X$ to denote symmetric matrices. The $(i,j)$-th component of matrix $A$ is denoted as $[A]_{ij}$ or $A_{ij}$ in short. We use lower case letters such as $v, x$ for vectors, and denote the $i$-th component of a vector $v$ as $[v]_i$ or $v_i$ in short. We use upper case letter $I$ for set of indices. Given a vector  where $v \in \mathbb{R}^n$ and $I \subseteq [n]$, we let $v_I \in \mathbb{R}^n$ to be the vector: 
\begin{align*}
	[v_I]_i = \left\{
	\begin{array}{lll}
		v_i & i \in I \\
		0 & i \notin I \\
	\end{array}
	\right.
\end{align*}
We use the usual notation $\|\cdot\|_1, ~ \|\cdot\|_2$ for $\ell_1, ~ \ell_2$ norm respectively for a given vector. Let $\|\cdot\|_0$ be the $\ell_0$ norm which denotes the number of non-zero components. Given a set $S$, we denote $\textup{conv}(S)$ as the convex hull of $S$; given a positive integer $n$ we denote $\{1, \dots, n\}$ by $[n]$; given a matrix $A$, we denote its trace by $\textup{tr}(A)$. {\color{black} Given $n$ scalars $v_1, \dots v_n$, $\text{diag}(v_1, \dots, v_n)$ is the $n \times n$ matrix whose diagonal elements are $v_i$'s and the off-diagonal terms are equal to $0$. } We list all the notation used in this paper in Table~\ref{tab:notation}.

Notice that the constraint $\|x\|_2 = 1, \|x\|_0 \leq k$ implies that $\|x\|_1 \leq \sqrt{k}$. Thus, one obtains the so-called $\ell_1$-norm relaxation of \ref{eq:SPCA}:
\begin{align}
\text{OPT}_{\ell_1} \triangleq \max_{\|x\|_2 \leq 1, \|x\|_1 \leq \sqrt{k}} x^{\top} A x \tag{$\ell_1$-relax} \label{eq:ell_1-relax}.
\end{align} 
The relaxation \ref{eq:ell_1-relax} has two advantages: 
\begin{itemize}
\item[(a)] As shown in Theorem~\ref{thm:app-ratio} below, \ref{eq:ell_1-relax} gives a constant factor bound on \ref{eq:SPCA}, 
\item[(b)] The feasible region is convex and all the nonconvexity is in the objective function. 
\end{itemize}
\textcolor{black}{We build on these two advantages: our convex IP relaxation is a further relaxation of \ref{eq:ell_1-relax} (together with some implied linear inequalities for \ref{eq:SPCA}) which heavily use the fact that the feasible region of \ref{eq:ell_1-relax} is convex.} {\color{black}We require to use IP methods and construct the convex IP, since the objective of \ref{eq:ell_1-relax} is non-convex. Thus, we use a combination of \ref{eq:ell_1-relax} and IP methods to obtain strong dual bounds.}

We note that \ref{eq:ell_1-relax} is an important estimator in its own right~\cite{hastie2015statistical,wht_09}---it is commonly used in the statistics/machine-learning community as one that leads to an eigenvector of $A$ with entries having a small $\ell_{1}$-norm (as opposed to a small $\ell_0$-norm). {\color{black} We emphasize that $\ell_{1}$-relaxation has never been used to computationally obtain dual bounds for SPCA. Indeed, to the best of our knowledge there has been no systematic study of the theoretical and empirical computational properties of the $\ell_1$-relaxation vis-\`a-vis SPCA.} 

The rest of this section is organized as follows:  In Section~\ref{sec:l1}, we present the constant factor bound on \ref{eq:SPCA} given by \ref{eq:ell_1-relax}, improving upon some known results. In Section~\ref{sec:cip}, we present the construction of our convex IP and prove results on the quality of bound provided. In Section~\ref{sec:cip2}, we discuss perturbing the original matrix in order to make the convex IP more efficiently solvable while still providing reasonable dual bounds.
In Section~\ref{sec:num-exp}, we present results from our computational experiments. 

\subsection{Quality of $\ell_1$-relaxation as a surrogate for the \ref{eq:SPCA} problem}\label{sec:l1}
The following theorem is an improved version of a result appearing in~\cite{vershynin2016high} (Exercise 10.3.7).
\begin{theorem} \label{thm:app-ratio}
	The objective value $\text{OPT}_{\ell_1}$ is upper bounded by a multiplicative factor $\rho^2$ away from $\lambda^k(A)$, i.e., $\lambda^k(A) \leq \text{OPT}_{\ell_1} \leq \rho^2 \cdot \lambda^k(A)$ with $\rho \leq 1 + \sqrt{\frac{k}{k + 1}} $. 
\end{theorem}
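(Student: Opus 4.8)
The plan is to establish the two inequalities separately; the lower bound $\lambda^k(A)\le\text{OPT}_{\ell_1}$ is immediate, since $\|x\|_2=1,\ \|x\|_0\le k$ forces $\|x\|_1\le\sqrt{\|x\|_0}\,\|x\|_2\le\sqrt k$, so the feasible region of \ref{eq:SPCA} sits inside that of \ref{eq:ell_1-relax}. For the upper bound I would use that $A\succeq0$ (it is a covariance matrix): writing $A=B^{\top}B$ gives $x^{\top}Ax=\|Bx\|_2^2$ and makes $x\mapsto x^{\top}Ax$ convex, so \ref{eq:ell_1-relax} attains its maximum at an extreme point $x^{\star}$ of its feasible region, which --- since the Euclidean ball is strictly convex --- must satisfy $\|x^{\star}\|_2=1$ in addition to $\|x^{\star}\|_1\le\sqrt k$. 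I would then split $x^{\star}=y+z$, where $y$ is the restriction of $x^{\star}$ to the index set $S$ of its $k$ largest-magnitude entries and $z$ is the restriction to $S^c$. The triangle inequality gives $\|Bx^{\star}\|_2\le\|By\|_2+\|Bz\|_2$ (this is where $A\succeq0$ matters), and since $y/\|y\|_2$ is feasible for \ref{eq:SPCA} one gets the ``head'' bound $\|By\|_2\le\alpha\sqrt{\lambda^k(A)}$ with $\alpha\triangleq\|y\|_2\in(0,1]$.

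The heart of the argument is the ``tail'' bound on $z^{\top}Az$. I would first record the structural facts (i) $\|z\|_\infty=|x^{\star}_{(k+1)}|\le\min\{\alpha/\sqrt k,\ 1/\sqrt{k+1}\}$ --- where $x^{\star}_{(j)}$ denotes the $j$-th largest-magnitude entry of $x^{\star}$, the first inequality coming from $k|x^{\star}_{(k)}|^2\le\|y\|_2^2$ and the second from $(k+1)|x^{\star}_{(k+1)}|^2\le\|x^{\star}\|_2^2=1$; (ii) $\|z\|_1\le\sqrt k-\|y\|_1\le\sqrt k-\alpha$; (iii) $\|z\|_2^2=1-\alpha^2$. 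Next I would use the elementary fact that the vertices of $\{u:\|u\|_1\le t,\ \|u\|_\infty\le1\}$ are supported on at most $\lceil t\rceil$ coordinates: applying it to $z/\|z\|_\infty$ writes $z$ as $\|z\|_\infty$ times a convex combination of vectors $w_j$ with $\|w_j\|_0\le\lceil\|z\|_1/\|z\|_\infty\rceil$ and $\|w_j\|_2^2\le\|z\|_1/\|z\|_\infty$. Two further ingredients finish the tail bound: convexity of $w\mapsto w^{\top}Aw$ together with Jensen's inequality bounds $z^{\top}Az$ by $\max_j w_j^{\top}Aw_j$, and the inequality $\lambda^p(A)\le(p/k)\lambda^k(A)$ for integers $p\ge k$ --- which I would prove by averaging the restriction of a $p$-sparse optimizer over all of its $k$-subsets and using convexity once more --- turns the sparsity of the $w_j$ into a numerical factor. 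Depending on whether $\lceil\|z\|_1/\|z\|_\infty\rceil$ is $\le k$ or $>k$, this produces (up to the rounding in $\lceil\cdot\rceil$) $z^{\top}Az\le\|z\|_\infty\|z\|_1\,\lambda^k(A)$ or $z^{\top}Az\le(\|z\|_1^2/k)\,\lambda^k(A)$, and with (i)--(iii) it becomes $\|Bz\|_2\le g(\alpha)\sqrt{\lambda^k(A)}$ for an explicit $g$ with $g(1)=0$.

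It then remains to combine: $\sqrt{\text{OPT}_{\ell_1}}=\|Bx^{\star}\|_2\le(\alpha+g(\alpha))\sqrt{\lambda^k(A)}$, and I would maximize $\alpha+g(\alpha)$ over the feasible range of $\alpha$ (which is itself restricted by $1-\alpha^2=\|z\|_2^2\le\|z\|_\infty\|z\|_1$). A short calculation should give that this maximum equals $1+\sqrt{k/(k+1)}$, with the value $\sqrt{k/(k+1)}$ entering as the $\alpha$ at which the two bounds on $\|z\|_\infty$ in (i) coincide (i.e.\ $\alpha/\sqrt k=1/\sqrt{k+1}$); this delivers $\rho=1+\sqrt{k/(k+1)}$ and hence $\text{OPT}_{\ell_1}\le\rho^2\,\lambda^k(A)$.

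The step I expect to be the main obstacle is the tail estimate together with the final one-variable optimization. Using only the crude bounds $\|z\|_1\le\sqrt k$ and $\|z\|_\infty\le1/\sqrt k$ yields merely $\rho\le2$; obtaining the sharper constant forces one to exploit all three of the $\ell_1$, $\ell_2$, and $\ell_\infty$ controls on $z$ at once, to handle the non-integer ratio $\|z\|_1/\|z\|_\infty$ via the vertex structure of the truncated $\ell_1$-ball, and then to verify that the resulting maximization indeed peaks at the crossover $\alpha=\sqrt{k/(k+1)}$.
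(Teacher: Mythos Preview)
Your head--tail split and the use of $A=B^\top B$ are sound, and the auxiliary inequality $\lambda^p(A)\le(p/k)\,\lambda^k(A)$ for $p\ge k$ is correct (write a $p$-sparse unit vector as $\tfrac{1}{\binom{p-1}{k-1}}\sum_{S}v_S$ over its $k$-subsets, apply the triangle inequality for $\|B\cdot\|_2$ and Cauchy--Schwarz). The route, however, is genuinely different from the paper's and more circuitous.

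The paper does \emph{not} handle the tail $z$ as a single piece via the vertex structure of the $\ell_1\cap\ell_\infty$ ball. Instead it continues the same block decomposition past the head: with $I^1,\dots,I^m$ the index sets of the $k$ largest, next $k$ largest, etc.\ entries, it proves the purely geometric inclusion $T_k\subseteq\rho\cdot\mathrm{conv}(S_k)$ by bounding $\sum_i\|x_{I^i}\|_2$. The only ingredients are $\|x_{I^{j+1}}\|_2\le\|x_{I^j}\|_1/\sqrt{k}$ (each entry of block $j{+}1$ is dominated by the average of block $j$) and $\sum_{j\ge2}\|x_{I^j}\|_1\le\sqrt{k}-t$ with $t=\|x_{I^1}\|_1$, giving
\[
\sum_i\|x_{I^i}\|_2\ \le\ \gamma+\min\Bigl\{\tfrac{t}{\sqrt{k}},\sqrt{1-\gamma^2}\Bigr\}+1-\tfrac{t}{\sqrt{k}},\qquad \gamma=\|x_{I^1}\|_2,
\]
and an explicit two-variable optimization in $(\gamma,t)$ subject to $\gamma\le t\le\sqrt{k}\,\gamma$ yields $1+\sqrt{k/(k+1)}$. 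No polytope-vertex argument, no $\lambda^p/\lambda^k$ comparison, and no $\lceil\cdot\rceil$ ever appears.

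Your approach can be pushed through as well, but the endgame is not quite what you predict. The tail bound is not a function of $\alpha$ alone, and the maximum does \emph{not} occur at the ``crossover'' $\alpha=\sqrt{k/(k+1)}$. In your Case~1 ($\|z\|_1\le k\|z\|_\infty$) one has directly $z^\top Az\le\|z\|_\infty\|z\|_1\,\lambda^k(A)\le k\|z\|_\infty^2\,\lambda^k(A)\le\tfrac{k}{k+1}\lambda^k(A)$, using only $\|z\|_\infty\le 1/\sqrt{k+1}$; hence $\alpha+g\le 1+\sqrt{k/(k+1)}$ with the maximum at $\alpha=1$. In Case~2 the ceiling costs at most a factor $\lceil t\rceil/t<1+1/k$, and the resulting bound $\alpha+\|z\|_1\sqrt{k+1}/k\le 1+\sqrt{k+1}(\sqrt{k}-1)/k$ is strictly below $1+\sqrt{k/(k+1)}$. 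So the constant emerges from Case~1 via the $\ell_\infty$ control alone, not from any balancing in $\alpha$; the $\ell_2$ constraint (iii) is in fact never needed for the upper bound. What the paper's block decomposition buys is a direct two-parameter calculus problem, bypassing both the case split on $\|z\|_1/\|z\|_\infty$ and the integrality nuisance.
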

Proof of Theorem~\ref{thm:app-ratio} is provided in Section~\ref{sec:app1}. While we have improved upon the bound presented in~\cite{vershynin2016high}, we do not know if this new bound is tight. 

The approximation ratio $1 + \sqrt{\frac{k}{k + 1}}$ from Theorem 1 yields an almost $100 \%$ gap (see formal definition of gap in Section 4) in the worst case. From a practitioners' viewpoint, a $100 \%$ gap is obviously far from ideal and would not be considered as ``solving'' the problem. However, as we shall see in Section 4, the $\ell_1$-relaxation does provide very good dual bounds in many instances. Moreover, as stated above the approximation ratio of $1 + \sqrt{\frac{k}{k + 1}}$ is the best we can prove; however this bound may be significantly away from the actual bound.

Theorem \ref{thm:app-ratio} has implications regarding existence of polynomial-time algorithms to obtain a constant-factor approximation guarantee for \ref{eq:ell_1-relax}. In particular, the proof of Theorem \ref{thm:app-ratio} implies that if one can obtain a solution for \ref{eq:ell_1-relax} which is within a constant factor, say $\theta$, of $\text{OPT}_{\ell_1}$, then a solution for \ref{eq:SPCA} problem can be obtained, which is within a constant factor (at most $\theta\rho \approx 4\theta$) of $\lambda^k(A)$. Therefore, the $\ell_1$-relaxation is also {\color{black} inapproximable} in general.

\subsection{{\color{black}From $\ell_1$-relaxation to convex integer programming model}}\label{sec:cip}
A classical integer programming approach to finding dual bounds of \ref{eq:SPCA} would be to go to an extended space involving the product of $x$-variables and include one binary variable per $x$-variable in order to model the $\ell_0$-norm constraint, resulting in a very large number of binary variables. In particular, a typical model could be of the form:
\begin{eqnarray}
\textup{max} &  \textup{tr}(AX) \\
\textup{s.t.} & -z_i \leq x_i \leq z_i, ~ i \in [n] \\
& \sum\limits_{j = 1}^n z_i \leq k \\
& \|x\|_2 \leq 1 \\
& \left[\begin{array}{cc} 1 & x^{\top} \\ x & X\end{array}\right] \succeq 0\\
& \textup{rank}\left(\left[\begin{array}{cc} 1 & x^{\top} \\ x & X\end{array}\right]\right) = 1\\
& z \in \{0, 1\}^n.
\end{eqnarray}
It is easy to see that such a model is challenging due to {\color{black} (a) $n$ binary variables (b) ``quadratic" increase in number of variables ($X$) and (c) the presence of the rank constraint.} Even with significant progress, it is well-known that solving such problems beyond $n$ being a few hundred variables is extremely challenging~\cite{bienstock1996computational,frangioni2007sdp}. Indeed, instances with an arbitrary quadratic objective and bound constraints cannot be generally solved (exactly) by modern state-of-the-art methods as soon as the number of variables exceed a hundred or so~\cite{burer2009globally,bonami2016solving}.

{\color{black} This is how we address the challenges discussed above. 
\begin{enumerate}
\item $n$ binary variables (a): the feasible region of \ref{eq:ell_1-relax} is a convex set. Therefore, we do not have to include binary variables to model the $\ell_0$-norm constraint. We will use \ref{eq:ell_1-relax} as our basic relaxation. 
\item Quadratic increase in number of variables (b) and rank constraint (c): We do not use the $X$ variables to model the quadratic objective. Instead we upper bound the quadratic objective using piecewise linear function via integer programming techniques. 
\end{enumerate}

In other words, since the feasible region of \ref{eq:ell_1-relax} is a convex set and takes care of challenge (a), we model/upper bound the objective function using IP techniques to deal with challenges (b) and (c). Specifically, we follow the following procedure:}
\begin{itemize}
	\item[\textbf{step-0}:] By spectral decomposition, let $A = \sum_{i = 1}^n \lambda_i v_i v_i^{\top}$ where $(\lambda_i)_{i = 1}^n, (v_i)_{i = 1}^n$ are unit norm orthogonal eigen-pairs. Then the objective function of \ref{eq:ell_1-relax} is: $$\sum_{i = 1}^n \lambda_i (x^{\top} v_i)^2.$$ 
	\item[\textbf{step-1}:] Assuming that $\lambda \leq \lambda^k(A)$, we have that $x^{\top}Ax = x^{\top}(A  - \lambda I)x + \lambda$ for $x$ such that $\|x\|_2 = 1$, where $I $ is the identity matrix. Therefore, if we split the eigenvalues into two sets as $\{i: \lambda_i > \lambda \}$ and $\{i : \lambda_i < \lambda \}$,  the objective function can be represented as 
		\begin{align*}
			\lambda + \sum_{i \in \{i: \lambda_i > \lambda\}} (\lambda_i - \lambda) (x^{\top} v_i)^2 + \sum_{i \in \{i: \lambda_i < \lambda\}} (\lambda_i - \lambda) (x^{\top} v_i)^2
		\end{align*}
		where for each eigenvalue $\lambda_i$ that equals to $\lambda$, since $\lambda_i - \lambda = 0$, it does not contribute anything to objective function. 
{\color{black} Note that the first term is convex and the second term is concave. Since the objective is a maximizing, we need to deal with the first term. This idea of splitting the objective function into convex and concave part is a well-studied approach for attacking non-convex quadratic objective functions.} See for example~\cite{bomze2013copositivity, burer2009old} for use of some similar ideas.
	\item[\textbf{step-2}:] \label{sec:cip-point3} For each index $i \in \{i: \lambda_i > \lambda \}$, replace $x^{\top} v_i$ with a single continuous variable $g_i$, and set $\theta_i \gets \max \{ x^{\top} v_i: \|x\|_2 \leq 1, \|x\|_0 \leq k \}$ (or $\theta_i \gets \max \{ x^{\top} v_i: \|x\|_2 \leq 1, \|x\|_1 \leq \sqrt{k} \}$ if we explicitly want a relaxation of \ref{eq:ell_1-relax}) be an upper bound of $g_i$.  Then for each $g_i$ with $i \in \{i: \lambda_i > \lambda \}$, construct a piecewise linear upper approximation $\xi_i$ for $g_i^2$. Such piecewise linear upper approximation is usually modelled via \textit{special ordered sets of type 2} (SOS-2) constraints~\cite{nemhauser1988integer}. 
	\item[\textbf{step-3}:] For $\sum_{i \in \{i: \lambda_i < \lambda\}} ( \lambda_i - \lambda) (x^{\top} v_i)^2$, since $\lambda_i - \lambda < 0$, we obtain a convex constraint $\sum_{i \in \{i: \lambda_i < \lambda \}} - (\lambda_i - \lambda) (x^{\top} v_i)^2 \leq s$.    
\end{itemize}
{\color{black}
Therefore, a convex integer programming problem is obtained as follows:
\begin{align*}
	\begin{array}{rllll}
		\text{OPT}_{\text{convex-IP}} \triangleq \max & ~ \lambda + \sum_{i \in \{i : \lambda_i > \lambda\}} (\lambda_i - \lambda) \xi_i - s & \\
		\text{s.t. } & \left\{
		\begin{array}{lll}
		g_i = x^{\top} v_i \\
		- \theta_i \leq g_i \leq \theta_i
		\end{array}
		\right. & i \in [n] \\
		& \left\{
		\begin{array}{lll}
		g_i = \sum_{j = - N}^N \gamma_i^j \eta_i^j \\
		\xi_i = \sum_{j = - N}^N (\gamma_i^j)^2\eta_i^j \\
		(\eta_i^{-N}, \ldots, \eta_i^N) \in \text{SOS-2} 
		\end{array}
		\right. & i \in \{i: \lambda_i > \lambda \} \\
		& \left\{
		\begin{array}{lll}
		\sum_{i = 1}^n x_i^2 \leq 1 \\
		\sum_{i \in \{i : \lambda_i > \lambda\}} \left( \xi_i - \frac{\theta_i^2}{4N^2} \right)+ \sum_{i \in \{i : \lambda_i \leq \lambda\}} g_i^2 \leq 1
		\end{array}
		\right. & \\
		& \left\{
		\begin{array}{lll}
		\sum_{i = 1}^n y_i \leq \sqrt{k} \\
		y_i \geq x_i, ~ y_i \geq - x_i, ~ \forall i \in [n]
		\end{array}
		\right. & \\
		& ~ \sum_{i \in \{i: \lambda_i < \lambda \}} - (\lambda_i - \lambda) g_i^2 \leq s
	\end{array} \tag{Convex-IP} \label{eq:convex-IP}
\end{align*}

\textbf{Notations and explanations of \ref{eq:convex-IP}:}
\begin{description}
	\item[Variable $g_i$:] The first set of constraints 
		\begin{align*}
			\left\{
			\begin{array}{lll}
			g_i = x^{\top} v_i \\
			- \theta_i \leq g_i \leq \theta_i
			\end{array}
			\right. 
		\end{align*}
		transfers $x^{\top} v_i$ into a single variable for each $i \in [n]$. 
	\item[Variable $\xi_i$:] Based on step-2 above, for each $i \in \{i: \lambda_i > \lambda \}$, the second set of constraints 
		\begin{align*}
			\left\{
			\begin{array}{lll}
			g_i = \sum_{j = - N}^N \gamma_i^j \eta_i^j \\
			\xi_i = \sum_{j = - N}^N (\gamma_i^j)^2\eta_i^j \\
			(\eta_i^{-N}, \ldots, \eta_i^N) \in \text{SOS-2} 
			\end{array}
			\right.
		\end{align*}
		forms $\xi_i$ as a piecewise-linear upper approximation of $g_i^2$. Let $2N + 1$ be the number of splitting points of the domain $[- \theta_i, \theta_i]$ of variable $g_i$, where the set of splitting points $(\gamma_i^j)_{j = - N}^N$ satisfy  
		\begin{align*}
			- \theta_i = \gamma_i^{-N} < \ldots \gamma_i^0 ~ ( = 0) < \ldots < \gamma_i^N = \theta_i.
		\end{align*}
		Without any prior information of the optimal solution, we partition the set $[- \theta_i, \theta_i]$ equally to minimize the (worst-case) upper bounds, i.e.,  by letting $(\gamma_i^j)_{j = - N}^N \gets \left( \frac{j}{N} \cdot \theta_i \right)_{j = - N}^N$ be the value of $j^{\text{th}}$ splitting point. See Section~\ref{sec:app3} for details.
	\item[Quadratic constraints:] The third set of constraints does the following: Since $v_i$'s are orthogonal, then $\sum_{i = 1}^n x_i^2 \leq 1$ implies $\sum_{i = 1}^n g_i^2 \leq 1$. Together with $\xi_i$ representing $g_i^2$, we can obtain the implied inequality: 
		\begin{align*}
			\sum_{i \in  \{ i : \lambda_i > \lambda \}} \xi_i + \sum_{i \in \{i: \lambda_i \leq \lambda \}} g_i^2 \leq 1 + \sum_{i \in \{i: \lambda_i > \lambda\}} \frac{\theta_i^2}{4 N^2}
		\end{align*}
		The second term in the right-hand-side reflects the fact that $\xi_i$ is not exactly equal to  $g_i^2$, but only a piecewise linear upper bound of $g_i^2$. Note that the exact value of the second term in the right-hand-side also depends on the way one splits the set $[- \theta_i, \theta_i]$, the value $\sum_{i \in \{i: \lambda_i > \lambda\}} \frac{\theta_i^2}{4 N^2}$ in above formula is obtained via splitting $[- \theta_i, \theta_i]$ equally, which can be shown as the minimum upper bounds without any prior idea of the optimal solution $x$ of \ref{eq:SPCA} or \ref{eq:ell_1-relax}. See the proof in Section~\ref{sec:app3} for details. 
This constraint (cutting-plane) is not necessarily needed for a correct model -- it is used since it helps improving the dual bound of the LP relaxation and significantly improves the running-time of the solver. 
	\item[$\ell_1$ constraints:] The fourth set of constraints (the fourth one within the curly brackets in Convex IP) introduce new variables $y_i$ to denote $|x_i|$ for $i = 1, \ldots, n$ and model the constraint $$ \sum_{i = 1}^n |x_i| \leq \sqrt{k}.$$ 
	\item[Convex constraint:] The final constraint 
		\begin{align*}
			\sum_{i \in \{i: \lambda_i < \lambda\}} - (\lambda_i - \lambda) g_i^2 \leq s \tag{convex-constraint} \label{eq:convex-constraint}
		\end{align*}
		is a convex constraint that we obtained in step-3 where $x^{\top} v_i$ is replaced by a variable $g_i$ since $g_i = x^{\top} v_i$.
\end{description}
} 
We arrive at the following result:
\begin{proposition} \label{prop:SPCA-bound}
	The optimal objective value $\text{OPT}_{\text{convex-IP}}$ of \ref{eq:convex-IP} is an upper bound on the \ref{eq:SPCA} problem.
\end{proposition}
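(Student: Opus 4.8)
The plan is to prove the proposition by a direct ``lift and verify'' argument: I will show that every $x$ feasible for \ref{eq:SPCA} can be lifted to a point feasible for \ref{eq:convex-IP} whose objective value is at least $x^{\top}Ax$. Since $\text{OPT}_{\text{convex-IP}}$ is the value of a maximization problem, this gives $\text{OPT}_{\text{convex-IP}}\ge x^{\top}Ax$ for every such $x$, and hence $\text{OPT}_{\text{convex-IP}}\ge \lambda^k(A)$. Concretely, given $x$ with $\|x\|_2=1$ and $\|x\|_0\le k$, I would set $g_i:=x^{\top}v_i$ for $i\in[n]$, $y_i:=|x_i|$ for $i\in[n]$, and for each $i$ with $\lambda_i>\lambda$ take $(\eta_i^j)_{j=-N}^N$ to be the (unique) SOS-2 weights that write $g_i$ as a convex combination of its two neighbouring breakpoints among $(\gamma_i^j)_j$, setting $\xi_i:=\sum_j(\gamma_i^j)^2\eta_i^j$ (the value at $g_i$ of the piecewise-linear interpolant of $t\mapsto t^2$). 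Finally I would set $s:=\sum_{i:\lambda_i\le\lambda}(\lambda-\lambda_i)g_i^2$.

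The next step is to check feasibility of this point, constraint by constraint. The defining identities $g_i=x^{\top}v_i$ and the SOS-2 block hold by construction, and $\sum_i x_i^2\le 1$ is just $\|x\|_2=1$. For the perturbed $\ell_2$-ball constraint, orthogonality of $V=[v_1\,\cdots\,v_n]$ gives $\sum_{i=1}^n g_i^2=\|V^{\top}x\|_2^2=\|x\|_2^2=1$, and the elementary interpolation estimate $\xi_i\le g_i^2+\theta_i^2/(4N^2)$ (the linear interpolant of $t\mapsto t^2$ on a subinterval of length $\theta_i/N$ overshoots the function by at most $\theta_i^2/(4N^2)$) then yields $\sum_{i:\lambda_i>\lambda}\xi_i+\sum_{i:\lambda_i\le\lambda}g_i^2\le 1+\frac{1}{4N^2}\sum_{i:\lambda_i>\lambda}\theta_i^2$. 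For the $\ell_1$-block, $\sum_i y_i=\|x\|_1\le\sqrt{k}\,\|x\|_2=\sqrt{k}$ by Cauchy--Schwarz applied on the support of $x$ (which has size at most $k$), and $y_i=|x_i|\ge\pm x_i$. The bounds $-\theta_i\le g_i\le\theta_i$ follow from the definition of $\theta_i$ together with the symmetry of the feasible region of \ref{eq:SPCA} (and of \ref{eq:ell_1-relax}) under $x\mapsto -x$: one has $x^{\top}v_i\le\theta_i$ and also $-x^{\top}v_i=(-x)^{\top}v_i\le\theta_i$. The last inequality of the model holds with equality by the choice of $s$.

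It then remains to bound the objective from below. Since $t\mapsto t^2$ is convex, its piecewise-linear interpolant lies above it, so $\xi_i\ge g_i^2$ for every $i$ with $\lambda_i>\lambda$; as $\lambda_i-\lambda>0$ on that index set,
\begin{align*}
\lambda+\sum_{i:\lambda_i>\lambda}(\lambda_i-\lambda)\xi_i-s
&\ge \lambda+\sum_{i:\lambda_i>\lambda}(\lambda_i-\lambda)g_i^2-\sum_{i:\lambda_i\le\lambda}(\lambda-\lambda_i)g_i^2\\
&= \lambda+\sum_{i=1}^n(\lambda_i-\lambda)g_i^2 = \sum_{i=1}^n\lambda_i g_i^2 = x^{\top}Ax,
\end{align*}
where the third equality uses $\sum_i g_i^2=1$ and the last uses $g_i=x^{\top}v_i$ together with $A=\sum_i\lambda_i v_i v_i^{\top}$. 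Thus the lifted point is feasible for \ref{eq:convex-IP} with objective at least $x^{\top}Ax$, and taking the maximum over feasible $x$ proves the proposition.

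I do not expect a genuine obstacle here: the argument is a routine feasibility check once the lift is chosen correctly. The two steps that require a little care are (i) the interpolation-error estimate that certifies the perturbed $\ell_2$-ball constraint --- one must track the breakpoint spacing $\theta_i/N$ correctly and not confuse $\theta_i$ with $\theta_i^2$ --- and (ii) the observation that, although $\theta_i$ is defined through a one-sided maximum, the symmetry of the feasible set makes $\theta_i$ a valid bound on $|g_i|$ as well; missing either point would break feasibility. Everything else reduces to Cauchy--Schwarz on a sparse support, orthogonality of the eigenbasis, and convexity of the square.
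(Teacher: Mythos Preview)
Your proposal is correct and follows essentially the same ``lift and verify'' argument as the paper's proof: construct $(x,y,g,\xi,\eta,s)$ from an optimal (or arbitrary feasible) $x$ for \ref{eq:SPCA}, check feasibility for \ref{eq:convex-IP}, and use $\xi_i\ge g_i^2$ from convexity of $t\mapsto t^2$ together with $\sum_i g_i^2=1$ to bound the objective below by $x^{\top}Ax$. Your write-up is in fact more explicit than the paper's in verifying the bounds $|g_i|\le\theta_i$ via the symmetry $x\mapsto -x$ and in spelling out the interpolation-error estimate $\xi_i\le g_i^2+\theta_i^2/(4N^2)$ that certifies the perturbed $\ell_2$-ball constraint.
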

Proposition~\ref{prop:SPCA-bound} is formally verified in Appendix~\ref{sec:app2}. 

Next combining the result of Theorem~\ref{thm:app-ratio} with the quality of the approximation of the objective function of \ref{eq:ell_1-relax} by \ref{eq:convex-IP}, we obtain the following result:

\begin{proposition} \label{prop:convex-IP-bound}
	The optimal objective value $\text{OPT}_{\text{convex-IP}}$ of \ref{eq:convex-IP} is upper bounded by 
	\begin{align*}
		\text{OPT}_{\textup{convex-IP}} \leq \rho^2 \lambda^k(A) + \frac{1}{4N^2} \sum_{i \in \{i: \lambda_i > \lambda\}} (\lambda_i - \lambda) \theta_i^2.
	\end{align*}
\end{proposition}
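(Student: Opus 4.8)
The plan is to take an optimal solution of \ref{eq:convex-IP}, strip off the error incurred by the piecewise‑linear over‑estimation of the squared terms (which will account for exactly the additive term $\tfrac{1}{4N^2}\sum_{i:\lambda_i>\lambda}(\lambda_i-\lambda)\theta_i^2$), and recognize what is left as the objective of \ref{eq:ell_1-relax} evaluated at a point that is feasible for \ref{eq:ell_1-relax}; Theorem~\ref{thm:app-ratio} then converts this into a bound in terms of $\lambda^k(A)$.

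Concretely, let $(x,g,\xi,\eta,s,y)$ attain $\text{OPT}_{\text{convex-IP}}$. Two facts drive the estimate. First, the piecewise‑linear secant over‑approximation of $t\mapsto t^2$ on a mesh of width $\theta_i/N$ gives, for each $i$ with $\lambda_i>\lambda$, the sandwich $g_i^2\le \xi_i\le g_i^2+\tfrac{\theta_i^2}{4N^2}$ (the same elementary estimate used to justify the cutting plane in the model). Second, the constraint defining $s$ forces $s\ge \sum_{i:\lambda_i\le\lambda}(\lambda-\lambda_i)g_i^2$. Substituting the upper bound on $\xi_i$ into the objective and then replacing $-s$ by the larger quantity $\sum_{i:\lambda_i\le\lambda}(\lambda_i-\lambda)g_i^2$ merges the two eigenvalue groups and yields
\[
\text{OPT}_{\text{convex-IP}}\ \le\ \lambda+\sum_{i=1}^n(\lambda_i-\lambda)g_i^2+\frac{1}{4N^2}\sum_{i:\lambda_i>\lambda}(\lambda_i-\lambda)\theta_i^2 .
\]
Since $g_i=x^{\top}v_i$ and the $v_i$'s form a complete orthonormal system, $\sum_i g_i^2=\|x\|_2^2$ and $\sum_i\lambda_i g_i^2=x^{\top}Ax$, so the first two terms equal $x^{\top}Ax+\lambda(1-\|x\|_2^2)$. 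The constraints of \ref{eq:convex-IP} imply $\|x\|_2\le1$ and $\|x\|_1\le\sqrt k$, so $x$ is feasible for \ref{eq:ell_1-relax}; hence $x^{\top}Ax\le\text{OPT}_{\ell_1}\le\rho^2\lambda^k(A)$ by Theorem~\ref{thm:app-ratio}.

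The only remaining term is $\lambda(1-\|x\|_2^2)\ge0$, and disposing of it is where I expect the real work to be. The plan is to argue that $\text{OPT}_{\text{convex-IP}}$ is attained with $\|x\|_2=1$: if $\text{OPT}_{\text{convex-IP}}\le\lambda$ the desired inequality is immediate, since $\lambda\le\lambda^k(A)\le\rho^2\lambda^k(A)$ and the extra sum is nonnegative; otherwise one shows that at an optimal point the $\ell_2$‑ball constraint must be active, because whenever $x^{\top}(A-\lambda I)x>0$ scaling $x$ radially outward strictly increases the reformulated objective, and such scaling only fails to remain feasible once $\|x\|_2$ reaches $1$ — unless the $\ell_1$ constraint binds first. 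Ruling out, or absorbing, that last possibility (the case $\|x\|_1=\sqrt k$ with $\|x\|_2<1$) is the crux of the argument; once $\|x\|_2=1$ is in force we get $\lambda(1-\|x\|_2^2)=0$, and the chain above gives $\text{OPT}_{\text{convex-IP}}\le\rho^2\lambda^k(A)+\tfrac{1}{4N^2}\sum_{i:\lambda_i>\lambda}(\lambda_i-\lambda)\theta_i^2$, as claimed.
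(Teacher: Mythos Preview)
Your decomposition is exactly the paper's: split $\xi_i$ into $g_i^2$ plus the secant error, set $s$ to its lower bound, collapse the two eigenvalue groups into $\lambda+\sum_i(\lambda_i-\lambda)g_i^2$, and invoke Theorem~\ref{thm:app-ratio} on the $x$-part. The only difference is that you are more careful at one step where the paper is not.

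Specifically, the paper's proof writes, without further comment,
\[
\lambda+\sum_{i=1}^n(\lambda_i-\lambda)\bar g_i^2=\bar x^{\top}A\bar x,
\]
and then bounds the right side by $\rho^2\lambda^k(A)$ via $\bar x\in T_k$. As you correctly observe, the left side equals $\bar x^{\top}A\bar x+\lambda(1-\|\bar x\|_2^2)$, and the extra term vanishes only if $\|\bar x\|_2=1$. The paper does not justify this; it silently carries over the identity from the proof of Proposition~\ref{prop:SPCA-bound}, where the point in question was an \ref{eq:SPCA} optimizer and hence did satisfy $\|x^\ast\|_2=1$. So the ``crux'' you flagged---disposing of $\lambda(1-\|\bar x\|_2^2)$ when only $\|\bar x\|_2\le 1$ is known---is a genuine gap that the paper's own argument leaves open as well; you have not missed a trick that the authors supply. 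Your scaling heuristic (if the shifted quadratic is positive, push $x$ outward until a constraint binds) is the natural first attempt, but as you note it does not by itself rule out the case where $\|x\|_1=\sqrt{k}$ binds with $\|x\|_2<1$, and the discretized objective further complicates any optimality argument at the \ref{eq:convex-IP} maximizer.
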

A proof of Proposition~\ref{prop:convex-IP-bound} is presented in Appendix~\ref{sec:app3}.

Finally, let us discuss why we expect \ref{eq:convex-IP} to be appealing from a computational viewpoint. Unlike typical integer programming approaches, the number of binary variables in \ref{eq:convex-IP} is $(2N + 1) \cdot |\{i: \lambda_i > \lambda \}|$ which is usually significantly smaller than $n$. Indeed, heuristics for SPCA generally produce good values of $\lambda$, and in  almost all experiments we found that $|\{i: \lambda_i > \lambda \}|  \ll n$. Moreover, $N$ is a parameter we control. In order to highlight the ``computational tractability" of \ref{eq:convex-IP}, we formally state the following result: 

\begin{proposition} \label{prop:convex-IP-poly}
	Assuming the number of splitting points $N$ and the size of set $\{i: \lambda_i > \lambda\}$ is fixed, the \ref{eq:convex-IP} problem can be solved in polynomial time. 
\end{proposition}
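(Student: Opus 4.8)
The plan is to exploit the fact that the \emph{only} combinatorial (non-convex) ingredient in \ref{eq:convex-IP} is the collection of SOS-2 constraints, of which there is exactly one per index in $P := \{i : \lambda_i > \lambda\}$, each involving the $2N+1$ variables $(\eta_i^{-N},\dots,\eta_i^{N})$. First I would recall the SOS-2 semantics: such a constraint on $2N+1$ ordered variables forces all but at most two \emph{consecutive} of them to be zero, so it is satisfied precisely when the support of $(\eta_i^{-N},\dots,\eta_i^N)$ is contained in one of the $2N$ consecutive index pairs $\{j,j+1\}$, $j=-N,\dots,N-1$ (singletons being degenerate cases of a pair). Enumerating, for each $i\in P$, which of these $2N$ pairs is ``active'' yields at most $(2N)^{|P|}$ support patterns in total; for fixed $N$ and fixed $|P|$ this is a constant, independent of $n$. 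The feasible region of \ref{eq:convex-IP} is then a union of $(2N)^{|P|}$ convex pieces, one for each pattern, and the optimum is the best objective value over these pieces.

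Next I would show that, for a fixed pattern, the residual problem obtained from \ref{eq:convex-IP} by imposing $\eta_i^\ell = 0$ for every index $\ell$ outside the active pair of $i$ (for all $i\in P$) is a convex optimization problem of size polynomial in $n$. Indeed, after this substitution the variables $g_i$ and $\xi_i$ for $i\in P$ become affine functions of the two surviving $\eta$-variables and of $x$ (via $g_i = x^\top v_i$), so their defining equations are linear; the objective $\lambda + \sum_{i\in P}(\lambda_i-\lambda)\xi_i - s$ is linear; the $\ell_1$-type constraints and the box constraints $-\theta_i\le g_i\le\theta_i$ are linear; and the remaining constraints $\sum_{i=1}^n x_i^2\le 1$, $\;\sum_{i:\lambda_i\le\lambda}(\lambda-\lambda_i)g_i^2\le s$, and $\sum_{i\in P}\xi_i+\sum_{i:\lambda_i\le\lambda}g_i^2\le 1+\frac{1}{4N^2}\sum_{i\in P}\theta_i^2$ are convex, since $\lambda-\lambda_i\ge 0$ on the relevant index set and sums of squares are convex. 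Hence each residual problem is a convex, second-order-cone-representable program with $O(n)$ variables and constraints, solvable in polynomial time (to any prescribed accuracy) by interior-point or ellipsoid methods.

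Combining these two observations gives the result: $\text{OPT}_{\text{convex-IP}}$ equals the maximum over the $\le (2N)^{|P|}$ patterns of the optimal values of the corresponding convex programs, and since there are constantly many patterns (for fixed $N$ and $|P|$) and each subproblem is solved in $\mathrm{poly}(n)$ time, the whole problem is solved in $\mathrm{poly}(n)$ time. The main obstacle is bookkeeping rather than a genuine difficulty: one must state the SOS-2 semantics precisely — including the implicit normalization $\sum_j \eta_i^j = 1$, $\eta_i^j\ge 0$ of the piecewise-linear model — so that the count of $2N$ active pairs per constraint is exactly right, and one must be careful about what ``polynomial time'' means for the convex subproblems, either invoking polynomial-time solvability of SOCPs up to fixed accuracy or noting that an optimizer with a polynomial-size rational description is attained so that an exact statement holds. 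Neither point requires ideas beyond standard mixed-integer and convex optimization.
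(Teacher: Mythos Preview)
The paper does not include a standalone proof of this proposition; it is stated as a remark on computational tractability and left to the reader, with only the surrounding discussion (that the number of binary variables is $(2N+1)\cdot|\{i:\lambda_i>\lambda\}|$) serving as justification. Your argument is exactly the intended one: enumerate the at most $(2N)^{|P|}$ SOS-2 support patterns, observe that each residual problem is a convex (second-order-cone representable) program in $O(n)$ variables, and conclude. This is correct and matches what the paper implicitly relies on, so there is nothing to add beyond the bookkeeping caveats you already flag about the SOS-2 normalization and the meaning of polynomial-time solvability for the convex subproblems.
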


\textcolor{black}{Note that the convex integer programming method which is solvable in polynomial time, does not contradict the inapproxamability of the \ref{eq:SPCA} problem, since $\text{OPT}_{\text{convex-IP}}$ is upper bounded by the sum of $\rho^2 \lambda^k(A)$ and a term corresponding to the sample covariance matrix.} 

\subsection{Improving the running time of Convex-IP}\label{sec:cip2}
\subsubsection{Perturbation of the covariance matrix $A$:}
{\color{black} In practice, we do the following (sequence of) perturbation on covariance matrix $A$ to reduce the running time of solving convex IP. Again let $\lambda$ (obtained from some heuristic method) be a lower bound on the $\lambda^k(A)$, let $A = \sum_{i = 1}^n \lambda_i v_i v_i^{\top}$ be the spectral decomposition of $A$ with $\lambda_1 \geq \ldots \geq \lambda_n \geq 0$. }
\begin{enumerate}
	\item Set $\bar{\lambda} \triangleq \max \{\lambda_i: \lambda_i \leq \lambda \}$ (where $\lambda_1, \lambda_2, \dots, \lambda_n$ are the eigenvalues of $A$). We assume $\bar{\lambda} < \lambda$. {\color{black}However, when $\bar{\lambda} \triangleq \max\{\lambda_i: \lambda_i \leq \lambda \} = \lambda$, one can apply Algorithm~\ref{algo:pert} to obtain a matrix $\bar{A} \succeq A$ such that none of the eigenvalues of $\bar{A}$ equals $\lambda$. We then replace $A$ by $\bar{A}$. Now letting $\lambda_1, \lambda_2, \dots, \lambda_n$ to be  the eigenvalues of (the updated) $A$ and $\bar{\lambda} \triangleq \max \{\lambda_i: \lambda_i \leq \lambda \}$, we obtain that $\bar{\lambda} < \lambda$ for $\bar{A}$. }

\begin{algorithm}
\caption{\textcolor{black}{Perturbation of $A$}} \label{algo:pert}
\begin{algorithmic}[1]
\State \emph{Input}: Sample covariance matrix $A$ and $\lambda$. 
\State \emph{Output}: A perturbed sample covariance matrix $\bar{A}$ with distinct eigenvalues such that $\bar{A} \succeq A$ {\color{black} and none of the eigenvalues of $\bar{A}$ equals $\lambda$}.
\Function{Perturbation Method}{$A, \lambda$}\label{function:heuristic}
\State Compute spectral decomposition on $A$ as $A = V^{\top} \Lambda V$, {\color{black} where $\Lambda = \text{diag}(\lambda_1, \dots, \lambda_n)$. Let $\lambda_{i_1} > \cdots > \lambda = \lambda_{i_j} > \cdots \lambda_{i_p} \geq 0$ be all its distinct values of eigenvalues where $p \leq n$.}
\State Set $\Delta \lambda \gets \min \{\lambda_{i_j} - \lambda_{i_{j + 1}}\,|\, j = 1, \ldots, p - 1\}$.
\State Set $\bar{\Lambda} \gets \Lambda + \text{diag}\left( \frac{i - 1}{n} \epsilon \,|\, i = n, \ldots, 1 \right)$ {\color{black} with $\epsilon = \frac{1}{2}\Delta \lambda$.}
\State \Return $\bar{A} \gets V^{\top} \bar{\Lambda} V$.
\EndFunction
\end{algorithmic}
\end{algorithm}		
	\item Perturb the covariance matrix $A = \sum_{i = 1}^n \lambda_i v_i v_i^{\top}$ by $\bar{A} = \sum_{i \in \{i: \lambda_i > \lambda\}} \lambda_i v_i v_i^{\top} + \sum_{i \in \{i: \lambda_i \leq \lambda\}} \bar{\lambda} v_i v_i^{\top}$. Note that the objective value $\text{OPT}_{\text{convex-IP}}(\bar{A})$ in \ref{eq:convex-IP} is an upper bound on $\text{OPT}_{\text{convex-IP}}(A)$. {\color{black} This is because if $(x, y, g, \xi, \eta, s)$ is a feasible solution of \ref{eq:convex-IP}, then the objective function value of \ref{eq:convex-IP} corresponding to $\bar{A}$ is at least as large as that of $A$.} Replace $A$ by $\bar{A}$. 
	\item Therefore, the convex constraint $\sum_{i \in \{i: \lambda_i \leq \lambda\}} - (\lambda_i - \lambda) g_i^2 \leq s$ in \ref{eq:convex-IP} can be replaced by $\sum_{i \in \{i: \lambda_i \leq \lambda\}} - (\bar{\lambda} - \lambda) g_i^2 \leq s$, i.e., $\sum_{i \in \{i: \lambda_i \leq \lambda\}} g_i^2 \leq \dfrac{s}{\lambda - \bar{\lambda}}$. 
	\item {\color{black} Let $(\bar{x}, \bar{y}, \bar{g}, \bar{\xi}, \bar{\eta}, \bar{s})$ be an optimal solution for \ref{eq:convex-IP}. Since the convex constraint achieves equality for any optimal solution of \ref{eq:convex-IP}, i.e., 
		\begin{align*}
			& \sum_{i \in \{i: \lambda_i \leq {\lambda}\}} - ({\lambda} - \bar{\lambda}) \bar{g}_i^2 = \bar{s}
		\end{align*}
		together with	
		\begin{align*}
		&\sum_{i = 1}^n \bar{g}_i^2 = \sum_{i \in \{i: \lambda_i \leq {\lambda}\}} \bar{g}_i^2 +  \sum_{i \in \{i: \lambda_i > {\lambda}\}} \bar{g}_i^2 \leq 1 \\
			& 1\leq \sum_{i \in \{i: \lambda_i >  {\lambda} \}} \bar{\xi}_i + \sum_{i \in \{i: \lambda_i \leq  {\lambda} \}} \bar{g}_i^2 \leq 1 + \frac{1}{4N^2} \sum_{i \in \{i: \lambda_i > {\lambda} \}} \theta_i^2,
		\end{align*}
		imply the following inequalities:
		\begin{align*}
			& 1 - \dfrac{\bar{s}}{{\lambda} - \bar{\lambda}} \leq \sum_{i \in \{i: \lambda_i > {\lambda} \}} \bar{\xi}_i \leq 1 + \frac{1}{4N^2} \sum_{i \in \{i: \lambda_i > {\lambda}\}} \theta_i^2 - \dfrac{\bar{s}}{{\lambda} - \bar{\lambda}}, \\
			& \sum_{i \in \{i: \lambda_i > {\lambda}\}} \bar{g}_i^2 \leq 1 - \dfrac{\bar{s}}{{\lambda} - \bar{\lambda}}.
		\end{align*}
}

\end{enumerate}
Thus a simplified convex IP corresponding to the perturbed {\color{black} covariance matrix is:
\begin{align*}
	\begin{array}{rllll}
		\text{OPT}_{\text{pert-convex-IP}} \triangleq \max & ~ {\lambda} + \sum_{i \in \{i : \lambda_i > {\lambda}\}} (\lambda_i - {\lambda}) \xi_i - s & \\
		\text{s.t. } & \left\{
		\begin{array}{lll}
		g_i = x^{\top} v_i \\
		- \theta_i \leq g_i \leq \theta_i
		\end{array}
		\right. & i \in \{i: \lambda_i > {\lambda} \} \\
		& \left\{
		\begin{array}{lll}
		g_i = \sum_{j = - N}^N \gamma_i^j \eta_i^j \\
		\xi_i = \sum_{j = - N}^N (\gamma_i^j)^2\eta_i^j \\
		(\eta_i^{-N}, \ldots, \eta_i^N) \in \text{SOS-2} 
		\end{array}
		\right. & i \in \{i: \lambda_i > {\lambda} \} \\
		& \left\{
		\begin{array}{lll}
		\sum_{i = 1}^n x_i^2 \leq 1 \\
		\sum_{i \in \{i: \lambda_i > {\lambda}\}} g_i^2 \leq 1 - \frac{s}{{\lambda} - \bar{\lambda}} \\
		1 - \frac{s}{{\lambda} - \bar{\lambda}} \leq \sum_{i \in \{i: \lambda_i > {\lambda}\}} \xi_i \leq 1 + \sum_{i \in \{i: \lambda_i > {\lambda}\}}\frac{\theta_i^2}{4 N^2} - \frac{s}{{\lambda} - \bar{\lambda}}
		\end{array}
		\right. & \\
		& \left\{
		\begin{array}{lll}
		\sum_{i = 1}^n y_i \leq \sqrt{k} \\
		y_i \geq x_i, ~ y_i \geq - x_i, ~ \forall i \in [n]
		\end{array}
		\right. & \\
		& ~ v^{\top} y \leq b_{(v)}
	\end{array} \tag{Pert-Convex-IP} \label{eq:pert-convex-IP}
\end{align*}
where the quadratic constraints in \ref{eq:pert-convex-IP} are updated based on the discussion above and the final constraint $v^{\top} y \leq b_{(v)}$ represents the cutting planes that we add, see Proposition~\ref{prop:cut} for details. }

\begin{proposition} \label{prop:Pert-Convex-IP}
	The optimal objective value $\text{OPT}_{\text{Pert-Convex-IP}}$  is upper bounded by 
	\begin{align*}
		& ~ \text{OPT}_{\textup{Pert-Convex-IP}} \leq \rho^2 \lambda^k(A) +  \rho^2 (\bar{\lambda} - \lambda_{\min} (A))+ \frac{1}{4N^2} \sum_{i \in \{i: \lambda_i > \lambda\}} (\lambda_i - \lambda) \theta_i^2.
 	\end{align*}
\end{proposition}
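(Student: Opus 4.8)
The plan is to deduce Proposition~\ref{prop:Pert-Convex-IP} from Proposition~\ref{prop:convex-IP-bound} applied to the perturbed matrix $\bar A \triangleq \sum_{i \in \{i:\lambda_i>\lambda\}}\lambda_i v_i v_i^{\top} + \sum_{i \in \{i:\lambda_i\le\lambda\}}\bar\lambda v_i v_i^{\top}$, combined with an elementary perturbation estimate for $\lambda^k(\cdot)$. Throughout I write $I_1 \triangleq \{i:\lambda_i>\lambda\}$ and $I_1^c \triangleq \{i:\lambda_i\le\lambda\}$.

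\emph{Step 1: reduce to \eqref{eq:convex-IP} built from $\bar A$.} I would first argue that $\text{OPT}_{\text{Pert-Convex-IP}} \le \text{OPT}_{\text{convex-IP}}(\bar A)$, where $\text{OPT}_{\text{convex-IP}}(\bar A)$ denotes the optimal value of \eqref{eq:convex-IP} constructed from $\bar A$ (using $\lambda$, which is still a valid lower bound since $\bar A \succeq A$ forces $\lambda \le \lambda^k(A) \le \lambda^k(\bar A)$, and noting $\bar\lambda<\lambda$ so the index set $\{i:\bar\lambda_i>\lambda\}$ equals $I_1$). This is precisely the content of the derivation in Section~\ref{sec:cip2}: passing from \eqref{eq:convex-IP} for $\bar A$ to \eqref{eq:Pert-Convex-IP} either rewrites a constraint equivalently (steps~2--3, using that all eigenvalues of $\bar A$ in $I_1^c$ equal $\bar\lambda$, so $\sum_{i\in I_1^c}-(\bar\lambda-\lambda)g_i^2\le s$ is $\sum_{i\in I_1^c}g_i^2\le s/(\lambda-\bar\lambda)$) or adjoins a valid implied inequality (step~4), which can only shrink the feasible set. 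The one place deserving care is step~4, which uses that $\sum_{i=1}^n g_i^2=\|x\|_2^2=1$ and that $\sum_{i\in I_1^c}-(\bar\lambda-\lambda)g_i^2\le s$ holds at equality at optimality; I would check these two facts so that including the derived inequalities does not raise the optimal value.

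\emph{Step 2: apply Proposition~\ref{prop:convex-IP-bound} to $\bar A$.} The key simplification is that the perturbation leaves the ``large'' eigen-pairs untouched: for $i\in I_1$ the eigenvalue $\lambda_i$ is unchanged, the eigenvectors $v_i$ can be taken unchanged, and hence $\theta_i=\max\{x^{\top}v_i:\|x\|_2\le1,\|x\|_1\le\sqrt k\}$ is unchanged; also $\rho\le 1+\sqrt{k/(k+1)}$ depends only on $k$. Thus Proposition~\ref{prop:convex-IP-bound}, applied with $A$ replaced by $\bar A$, yields
\[
\text{OPT}_{\text{convex-IP}}(\bar A) \le \rho^2\,\lambda^k(\bar A) + \frac{1}{4N^2}\sum_{i\in I_1}(\lambda_i-\lambda)\theta_i^2 .
\]

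\emph{Step 3: perturbation bound on $\lambda^k$.} Since $\bar\lambda=\max\{\lambda_i:\lambda_i\le\lambda\}\ge\lambda_i$ for every $i\in I_1^c$, we have $\bar A-A=\sum_{i\in I_1^c}(\bar\lambda-\lambda_i)v_iv_i^{\top}\succeq 0$, and for any $x$ with $\|x\|_2=1$, using orthonormality of the $v_i$,
\[
x^{\top}\bar A x = x^{\top}A x + \sum_{i\in I_1^c}(\bar\lambda-\lambda_i)(x^{\top}v_i)^2 \le x^{\top}A x + (\bar\lambda-\lambda_{\min}(A))\sum_{i=1}^n (x^{\top}v_i)^2 = x^{\top}Ax + \bigl(\bar\lambda-\lambda_{\min}(A)\bigr).
\]
Maximizing over $\|x\|_2=1,\ \|x\|_0\le k$ gives $\lambda^k(\bar A)\le \lambda^k(A)+\bar\lambda-\lambda_{\min}(A)$, where the correction term is nonnegative. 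Chaining Steps~1--3 then gives exactly
\[
\text{OPT}_{\text{Pert-Convex-IP}} \le \rho^2\lambda^k(A) + \rho^2\bigl(\bar\lambda-\lambda_{\min}(A)\bigr) + \frac{1}{4N^2}\sum_{i\in I_1}(\lambda_i-\lambda)\theta_i^2 .
\]

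I expect the only real obstacle to be the bookkeeping in Step~1: verifying carefully that the chain of manipulations in Section~\ref{sec:cip2} (several of which invoke ``at an optimal solution'' arguments) does not increase the optimal value, so that $\text{OPT}_{\text{Pert-Convex-IP}}\le\text{OPT}_{\text{convex-IP}}(\bar A)$. Once that one-sided equivalence is secured, Steps~2--3 are the short perturbation estimate above plus a direct appeal to Proposition~\ref{prop:convex-IP-bound}.
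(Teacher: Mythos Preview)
Your proposal is correct and follows essentially the same approach as the paper: apply Proposition~\ref{prop:convex-IP-bound} to $\bar A$ and then bound $\lambda^k(\bar A)$ in terms of $\lambda^k(A)$ via the perturbation $\bar A-A=\sum_{i\in I_1^c}(\bar\lambda-\lambda_i)v_iv_i^{\top}$. The only cosmetic difference is that the paper invokes the subadditivity $\lambda^k(\bar A)\le\lambda^k(A)+\lambda^k(\bar A-A)$ and then bounds $\lambda^k(\bar A-A)\le\bar\lambda-\lambda_{\min}(A)$, whereas you derive the same inequality pointwise; also, the paper leaves your Step~1 (that \ref{eq:Pert-Convex-IP} does not exceed \ref{eq:convex-IP} for $\bar A$) implicit, so your version is in fact more carefully justified.
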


Note that in \ref{eq:pert-convex-IP}, we do not need the variables $g_i, i \in \{i: \lambda_i \leq \lambda\}$ which greatly reduces the number of variables since in general $|\{i: \lambda_i \geq \lambda\}| \ll n$. 
In practice, we note a significant reduction in running time, while the dual bound obtained from \ref{eq:pert-convex-IP} model remains reasonable. More details are presented in Section \ref{sec:num-exp}.

\subsubsection{Refining the splitting points}
Since the \ref{eq:pert-convex-IP} model runs much faster than the \ref{eq:convex-IP} model, we run the \ref{eq:pert-convex-IP} model iteratively. In each new iteration, we  add one extra splitting point describing each $\xi_i$ function. In particular, once we solve the \ref{eq:pert-convex-IP} model, we add one splitting point at the optimal value of $g_i$.

\subsubsection{Cutting planes}
\begin{proposition}\label{prop:cut}
Let $x \in \mathbb{R}^n$. Let $|x_{i_1}| \geq |x_{i_2}| \geq \dots \geq |x_{i_{n-1}}| \geq |x_{i_n}|$. Then let ${v}$ be the vector:
\begin{eqnarray}
{v}_{i_j} = \left\{ \begin{array}{rl} |x_{i_j}| & \textup{ if } j \leq k\\ 
|x_{i_k}| & \textup{ if } j >k. \\ 
\end{array} \right.
\end{eqnarray} 
Also let $b_{(v)} := \| (v_{i_1}, v_{i_2}, v_{i_3}, \dots, v_{i_k} )\|_2$. The inequality 
\begin{eqnarray}
{v}^{\top} y \leq b_{(v)},\label{eq:cut}
\end{eqnarray} is a valid inequality for \ref{eq:SPCA}. 
\end{proposition}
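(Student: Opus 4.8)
The plan is to establish~\eqref{eq:cut} directly, using only the defining constraints $\|x'\|_2 = 1$ and $\|x'\|_0 \le k$ of \ref{eq:SPCA}, by combining the Cauchy--Schwarz inequality with an elementary bound on the $k$ largest entries of the ``capped'' vector $v$. Here $x \in \mathbb{R}^n$ is the fixed point from which $v$ and $b_{(v)}$ are constructed, so $v$ and $b_{(v)}$ are constants and~\eqref{eq:cut} is a linear inequality in $y$. On a feasible point of \ref{eq:SPCA} (as it enters the lifted formulations \ref{eq:convex-IP} and \ref{eq:Pert-Convex-IP}) the variable $y$ may be taken to equal the componentwise absolute value of the underlying solution $x'$; thus the statement to prove is that $\sum_{i=1}^n v_i |x'_i| \le b_{(v)}$ for every $x'$ with $\|x'\|_2 \le 1$ and $\|x'\|_0 \le k$, which is exactly what guarantees that~\eqref{eq:cut} removes no optimal solution of \ref{eq:SPCA}.

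First I would fix such an $x'$ and set $S := \{\, i : x'_i \ne 0 \,\}$, so that $|S| \le k$ and $\sum_{i \in S}(x'_i)^2 = \|x'\|_2^2 \le 1$. Since all entries of $v$ are nonnegative, $\sum_{i=1}^n v_i |x'_i| = \sum_{i \in S} v_i |x'_i|$, and Cauchy--Schwarz gives $\sum_{i \in S} v_i |x'_i| \le \big(\sum_{i \in S} v_i^2\big)^{1/2}\big(\sum_{i \in S}(x'_i)^2\big)^{1/2} \le \big(\sum_{i \in S} v_i^2\big)^{1/2}$. It therefore suffices to show $\sum_{i \in S} v_i^2 \le b_{(v)}^2$ for every $S$ with $|S| \le k$ --- equivalently, that $b_{(v)}^2$ equals the sum of the $k$ largest squared entries of $v$.

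The remaining step is this combinatorial fact, which is immediate from the definition of $v$: its entries are $v_{i_j} = |x_{i_j}|$ for $j \le k$ and $v_{i_j} = |x_{i_k}|$ for $j > k$, and since $|x_{i_1}| \ge \dots \ge |x_{i_k}|$, each capped entry $|x_{i_k}|$ is no larger than any of $|x_{i_1}|, \dots, |x_{i_k}|$. Hence the $k$ largest entries of $v$, counted with multiplicity, are exactly $|x_{i_1}|, \dots, |x_{i_k}|$, whose squares sum to $\sum_{j=1}^k x_{i_j}^2 = b_{(v)}^2$; combining with the previous paragraph gives $\sum_i v_i |x'_i| \le b_{(v)}$. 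The only point demanding any care --- and it is the main obstacle --- is checking that capping the tail entries up to $|x_{i_k}|$ cannot create an entry that displaces one of $|x_{i_1}|, \dots, |x_{i_k}|$ from the top $k$; everything else is routine. As a check, taking $x$ with all coordinates equal in absolute value makes $v$ constant and recovers the familiar implied inequality $\|x'\|_1 \le \sqrt{k}$, so~\eqref{eq:cut} is a strengthening of it.
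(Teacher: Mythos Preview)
Your argument is correct and follows the same idea as the paper's (very terse) justification: a feasible $y$ has support of size at most $k$ and $\|y\|_2 \le 1$, so Cauchy--Schwarz on the support together with the fact that the $k$ largest entries of $v$ are $|x_{i_1}|,\dots,|x_{i_k}|$ gives $v^\top y \le b_{(v)}$. The only difference is that you spell out explicitly the combinatorial step (capping at $|x_{i_k}|$ cannot displace any of the top-$k$ entries), which the paper leaves to the reader.
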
  
The validity of this inequality is clear: If $(x,y)$ is a feasible point, then the support of $y$ is at most $k$ and  $\| y \|_2 \leq 1$. Therefore, ${v}^{\top} y \leq \| (v_{i_1}, v_{i_2}, v_{i_3}, \dots, v_{i_k} )\|_2 = b_{(v)}$. Notice that this inequality is not valid for \ref{eq:ell_1-relax}.  Also see~\cite{phdthesisKim}.

\textcolor{black}{We add these inequalities at the end of each iteration for the model where the seeding $x$ for constructing $v$ is chosen to be the optimal solution of the previous iteration.
}

\section{Proof of Theorem \ref{thm:app-ratio}}\label{sec:app1}
Given a vector $v\in \mathbb{R}^n$, we denote the $j^{th}$ coordinate of $v$ as $v_j$, and for some $J \subseteq [n]$ we denote the projection of $v$ onto the coordinates in the index set $J$ as $v_J$. Define
\begin{align}
	S_k & \triangleq  \{x \in \mathbb{R}^n\,|\, \|x\|_2 \leq 1, \|x\|_0 \leq k\}, \\
	T_k & \triangleq \{x \in \mathbb{R}^n\,|\,\|x\|_2 \leq 1, \|x\|_1 \leq \sqrt{k} \}.
\end{align}

Note that any $x \in T_k$ can be represented as a nonnegative combination of points in $S_k$, i.e., $x = x^1 + \cdots + x^m$ and $x^i \in S_k$ for all $i$. Here we think of each $x^i$ as a projection onto some unique $k$ components of $x$ and setting the other components to $zero$.  Let $y^i = \frac{x^i}{\|x^i\|_2}$, then $y^i \in S_k$.  Now we have, $x = \sum_{i = 1}^m \|x^i\|_2 \cdot y^i$, and therefore
\begin{align}
	\frac{1}{\sum_{i = 1}^m \|x^i\|_2} x = \sum_{i = 1}^m \frac{\|x^i\|_2}{\sum_{i = 1}^m \|x^i\|_2} \cdot y^i.
\end{align}

Thus, if we scale $x \in T_k$ by $\|x^1\|_2 + \ldots + \|x^m\|_2$, then the resulting vector belongs to $\textup{conv}(S_k)$. Since we want this scaling factor to be as small as possible, we solve the following optimization problem:
\textcolor{black}{\begin{align}
	\min \|x^1\|_2 + \ldots + \|x^m\|_2: ~ x = x^1 + \ldots+ x^m; ~ x^i \in S_k, \forall i \in [m]. \tag{Bound} \label{eq:bound}
\end{align}}

Without loss of generality, we assume that $x \geq 0$ and $x_1 \geq x_2 \geq \cdots \geq x_n \geq 0$. Let $x = \bar{v}^1 + \ldots + \bar{v}^m$ where $v^1, \ldots, v^m \in S_k$ is an optimal solution of \ref{eq:bound}. The following proposition presents a  result on an optimal solution of \ref{eq:bound}. 
\begin{proposition} \label{claim:structure-of-sol}
Let $I^1, \ldots, I^m$ be a collection of supports such that: $I^1$ indexes the $k$ largest (in absolute value) components in $x$, $I^2$ indexes the second $k$ largest (in absolute value) components in $x$, and so on (note that $m = \lceil \frac{n}{k} \rceil$). Then $I^1, \ldots, I^m$ is an optimal set of supports for \ref{eq:bound}.
\end{proposition}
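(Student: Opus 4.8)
The plan is to recast the statement as a combinatorial inequality about partitions and then settle it by a majorization argument. As already arranged we may assume $x_1 \geq x_2 \geq \cdots \geq x_n \geq 0$, since the $\ell_2$-norm of any sub-vector of $x$ depends only on the absolute values of its entries. Following the construction preceding the statement, the candidate solutions of \ref{eq:bound} that we compare are those in which each $v^i$ is the restriction of $x$ to a block $I^i$, the blocks $\{I^1,\ldots,I^m\}$ forming a partition of $[n]$ into parts of size at most $k$ (each such $v^i$ lies in $S_k$ since $\|v^i\|_2 \le \|x\|_2 \le 1$); the objective value of such a candidate is $\sum_i \|v^i\|_2 = \sum_i \sqrt{E(I^i)}$, where I write $E(I):=\sum_{j\in I}x_j^2$ for the ``energy'' of $I$. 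So it suffices to prove that the partition from the statement---$I^i$ indexing the $i$-th group of $k$ largest coordinates---minimizes $\sum_i \sqrt{E(I^i)}$ among all partitions of $[n]$ into parts of size at most $k$ (there is no harm in allowing the competing partition $\{J^1,\ldots,J^{m'}\}$ to have $m'\ge m=\lceil n/k\rceil$ parts; it cannot have fewer).

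The core of the argument is to show that the block-energy vector of the proposed partition \emph{majorizes} that of every competing partition. Pad $\big(E(I^1),\ldots,E(I^m)\big)$ with $m'-m$ zeros so both vectors have length $m'$, and order the competitor's energies non-increasingly as $E_{(1)}\ge\cdots\ge E_{(m')}$. The key observation is that, for every $t$, the union $J^{(1)}\cup\cdots\cup J^{(t)}$ of the $t$ highest-energy blocks contains at most $tk$ indices, so $\sum_{i\le t}E_{(i)}$ is a sum of at most $tk$ of the numbers $x_1^2,\ldots,x_n^2$; since these are sorted, that sum is at most $\sum_{j\le \min(tk,n)} x_j^2$, which is exactly $\sum_{i\le t} E(I^i)$ because the first $t$ proposed blocks consist precisely of the $\min(tk,n)$ largest coordinates. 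As the two partitions have equal total energy $\sum_j x_j^2$, this yields the majorization. (The same computation makes transparent why, among optimal partitions, every block but the last has size $k$.)

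To conclude I would invoke Schur-concavity: on $\mathbb{R}_{\ge 0}^{m'}$ the map $(u_1,\ldots,u_{m'})\mapsto \sum_i \sqrt{u_i}$ is a symmetric sum of concave functions of the individual coordinates, hence Schur-concave, so that if $u$ majorizes $w$ then $\sum_i \sqrt{u_i}\le \sum_i \sqrt{w_i}$. Applying this to $u=\big(E(I^1),\ldots\big)$ and $w=\big(E(J^1),\ldots\big)$ gives $\sum_i \sqrt{E(I^i)}\le \sum_i \sqrt{E(J^i)}$ for every competing partition, which is the claim.

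I expect the combinatorial core of the second paragraph to be the main obstacle: recognizing that the right comparison between partitions is a majorization of their block-energy vectors, together with the bookkeeping that the $t$ highest-energy blocks of any partition occupy at most $tk$ coordinates and hence carry energy no more than that of the $tk$ largest squared entries. Once that is in place the Schur-concavity step is standard. A more elementary alternative, if one wishes to avoid majorization, is to induct on $m$: by exchange moves one shows that some optimal partition uses $\{1,\ldots,k\}$ as a (highest-energy) block---if that block is not full, moving an index into it strictly decreases the objective because $\sqrt{a+\delta}+\sqrt{b-\delta}<\sqrt{a}+\sqrt{b}$ whenever $a\ge b$ and $0<\delta\le b$, and if it is full but not equal to the top $k$, swapping a small index out for a larger index in does the same---and one then recurses on $\{k+1,\ldots,n\}$ with $m-1$ parts, the only delicate point being the handling of ties among equal-valued coordinates.
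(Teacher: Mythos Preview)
Your proof is correct, but it takes a different route from the paper's. The paper argues by an exchange/swapping procedure: assume an optimal partition $\bar I^1,\ldots,\bar I^m$ (ordered so that $\|x_{\bar I^1}\|_2\ge\cdots\ge\|x_{\bar I^m}\|_2$), locate the first block $\bar I^p$ that disagrees with $I^p$, and swap the smallest-value index currently in $\bar I^p$ for the largest-value index that should be there; the inequality $\sqrt{a+\delta}+\sqrt{b-\delta}\le\sqrt a+\sqrt b$ (for $a\ge b$, $\delta\ge 0$) shows the objective does not increase, and one iterates. Your argument instead identifies the structural reason in one shot: the block-energy vector of the greedy partition majorizes that of any competitor (because the $t$ highest-energy blocks of a competitor span at most $tk$ indices and hence carry at most the energy of the top $tk$ coordinates), and Schur-concavity of $u\mapsto\sum_i\sqrt{u_i}$ finishes. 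Your approach is cleaner and sidesteps the swap bookkeeping; the paper's is more elementary in that it uses nothing beyond the concavity inequality for two terms. It is worth noting that the ``elementary alternative'' you sketch at the end---fill the top block by exchanges, then recurse---is essentially the paper's proof.
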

\proof{Proof.}
	We prove this result by the method of contradiction. Suppose we have an optimal representation as $x = \bar{v}^1 + \cdots \bar{v}^m$ --- and without loss of generality, we assume that $\|\bar{v}^1\|_2 \geq \cdots \geq \|\bar{v}^m\|_2$. Let $\bar{I}^1, \ldots, \bar{I}^m$ be the set of supports of $\bar{v}^1, \ldots, \bar{v}^m$ respectively, where we assume that the indices within each support vector are ordered such that $$(x_{\bar{I}^j})_1 \geq (x_{\bar{I}^j})_2 \geq \dots \geq (x_{\bar{I}^j})_g$$ for all $j \in \{1, \dots, m\}$ (note that $g = k$ if $j < m$).

Let $\bar{I}^p$ be the first support that is different from $I^p$, i.e., $\bar{I}^1 = I^1, \ldots, \bar{I}^{p - 1} = I^{p - 1}$ and $\bar{I}^p \neq I^p$. Let $I^p_{q}$ be the first index in $I^p$ that does not belong to $\bar{I}^p$ with $q \leq k$ since $\|\bar{I}^p\|_0 = k$. Therefore, $I^p_q$ must be in $\bar{I}^{p'}$ where $p' > p$. \textcolor{black}{Note now that by construction of $I$ and our assumption on $\bar{I}$, we have that $(x_{{I}^p})_q \geq (x_{\bar{I}^p})_q \geq (x_{ {\bar{I}}^p})_k$. Now we exchange the index $I^p_q$ in $\bar{I}^{p'}$ with $\bar{I}^p_k$ in $\bar{I}^p$.} We have:
	\begin{align}
		\sqrt{ \|x_{\bar{I}^p}\|_2^2 + ((x_{I^p})_q)^2 - ((x_{\bar{I}^p})_k)^2 } + \sqrt{ \|x_{\bar{I}^{p'}}\|_2^2 + ((x_{\bar{I}^p})_k)^2 - ((x_{I^p})_q)^2} \leq \|x_{\bar{I}^p}\|_2 + \|x_{\bar{I}^{p'}}\|_2,
	\end{align}
which holds because $\|x_{\bar{I}^p}\|_2 \geq  \|x_{\bar{I}^{p'}}\|_2$ and $((x_{I^p})_q)^2 - ((x_{\bar{I}^p})_k)^2\geq 0$.

Now repeating the above step, we obtain the result. 
\Halmos \endproof
Based on Proposition \ref{claim:structure-of-sol}, for any fixed $x \in T_k$, we can find out an optimal solution of \ref{eq:bound} in closed form. 
Now we would like to know, for which vector $x$, the scaling factor $\|v^1\|_2 + \ldots + \|v^m\|_2$  will be the largest.  Let $\rho$ be obtained by solving the following optimization problem:
\begin{align}
	\begin{array}{lllll}
		\rho = \max_{x} & \|x_{I^1}\|_2 + \cdots + \|x_{I^m}\|_2 \\
		\text{s.t.} & x = x_{I^1} + \cdots + x_{I^m} \\
		& \|x\|_2^2 = \|x_{I^1}\|_2^2 + \cdots + \|x_{I^m}\|_2^2 \leq 1 \\
		& \|x\|_1 = \|x_{I^1}\|_1 + \cdots + \|x_{I^m}\|_1 \leq \sqrt{k} \\
		& x_1 \geq \cdots \geq x_n \geq 0.
	\end{array} \tag{Approximation ratio} \label{eq:app-ratio}
\end{align} 
Then we obtain
\begin{align}
	T_k \subseteq \rho \cdot \text{Conv}\left( S_k \right).
\end{align}

Although the optimal objective value of  \ref{eq:app-ratio} is hard to compute exactly, we can still find an upper bound.

\begin{lemma} \label{lemma:app-ratio}
	The objective value $\rho$ of \ref{eq:app-ratio} is bounded from above by $1 + \sqrt{\frac{k}{k + 1}}$. 
\end{lemma}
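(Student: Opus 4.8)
# Proof Proposal for Lemma~\ref{lemma:app-ratio}

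The plan is to reduce the optimization problem \ref{eq:app-ratio} to a one-dimensional calculation by exploiting the structure of the optimal supports $I^1, \dots, I^m$. Since $x$ is sorted in decreasing order and $I^j$ collects the $j$-th block of $k$ coordinates, the quantities $\|x_{I^1}\|_2 \geq \|x_{I^2}\|_2 \geq \cdots$ are naturally ordered. I would denote $a_j := \|x_{I^j}\|_2$ for $j = 1, \dots, m$, so that the objective is $\sum_j a_j$ and the constraint $\|x\|_2 \le 1$ becomes $\sum_j a_j^2 \le 1$. The first key step is to relate $a_2, \dots, a_m$ to $a_1$: because the entries in block $I^{j}$ are all at least as large as those in block $I^{j+1}$, and each block has $k$ entries, one gets a bound of the form $a_{j+1} \le (\text{something involving } \|x\|_1 \text{ of block } j)$. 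More usefully, the $\ell_1$-norm constraint $\sum_j \|x_{I^j}\|_1 \le \sqrt{k}$ combined with the sortedness lets me control the ``tail'' mass $\sum_{j \ge 2} a_j$.

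The heart of the argument is a two-term reduction: I expect that the worst case collapses to essentially ``block one plus a single aggregated tail block.'' Concretely, I would argue that for fixed $a_1 = \|x_{I^1}\|_2$, the sum $a_2 + a_3 + \cdots + a_m$ is maximized (subject to the $\ell_1$ and $\ell_2$ budgets that remain) when the tail is spread so as to make $\sum_{j\ge 2} a_j$ as large as possible; using Cauchy--Schwarz in the form $\sum_{j \ge 2} a_j \le \sqrt{(m-1)\sum_{j\ge 2} a_j^2} \le \sqrt{(m-1)(1 - a_1^2)}$ is too lossy, so instead I would use the sortedness: each entry in block $j\ge 2$ is at most $\frac{1}{k}\|x_{I^{j-1}}\|_1 \le \frac{1}{\sqrt k}\|x_{I^{j-1}}\|_2$ (since a block of $k$ sorted entries bounded by the average of the previous block). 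This telescoping/majorization bound should give $\sum_{j \ge 2} a_j \le \frac{1}{\sqrt k}\sum_{j\ge 1} \|x_{I^j}\|_1 \le \frac{1}{\sqrt k}\cdot \sqrt k = 1$, but that only yields $\rho \le 2$; to sharpen it to $1 + \sqrt{k/(k+1)}$ I need to couple the $\ell_1$ budget spent on block one with what remains, i.e. if block one uses $\ell_1$-mass $t = \|x_{I^1}\|_1$ then $a_1 \le \sqrt{t^2/k + \text{correction}}$ is wrong — rather $a_1 \le$ (value of $\max\{\|x_{I^1}\|_2 : \|x_{I^1}\|_1 \le t, \text{ entries } \ge \text{ entries of block 2}\}$), and the tail contributes at most $\frac{\sqrt k - t}{\sqrt k}$. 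Optimizing $a_1 + \frac{\sqrt k - t}{\sqrt k}$ over the relationship between $a_1$ and $t$, where the binding case has block one with one large coordinate and $k-1$ coordinates equal to the top of block two, should produce the stated constant.

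I would carry out the steps in this order: (1) set up $a_j$ notation and record the constraints $\sum a_j^2 \le 1$, $\sum \|x_{I^j}\|_1 \le \sqrt k$, with sortedness linking blocks; (2) prove the majorization bound $\|x_{I^{j+1}}\|_1 \le \|x_{I^j}\|_1$ and more refined entrywise bounds; (3) show the tail $\sum_{j\ge 2}a_j$ is bounded by a function of the $\ell_1$-mass not used by block one; (4) reduce to a two-variable (or one-variable after substitution) calculus problem of maximizing $a_1 + (\text{tail bound})$; (5) solve that optimization and verify the maximum equals $1 + \sqrt{\frac{k}{k+1}}$, identifying the extremal configuration (one coordinate of size $\sqrt{\frac{k}{k+1}}$-ish and the rest equal). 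The main obstacle I anticipate is step (3)–(4): getting a tail bound that is simultaneously tight enough to beat the trivial $\rho \le 2$ and clean enough to optimize in closed form. The delicate point is that the $\ell_1$ constraint must be ``charged'' correctly across blocks — a naive Cauchy--Schwarz loses the factor, so the argument has to use the sorted/majorized structure to show the extremal $x$ has a very specific near-two-block shape, and then a direct Lagrange/KKT computation on that shape gives $1 + \sqrt{k/(k+1)}$.
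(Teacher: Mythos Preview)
Your overall strategy matches the paper's: introduce $\gamma=\|x_{I^1}\|_2$ and $t=\|x_{I^1}\|_1$, use sortedness to bound later blocks by the $\ell_1$--mass of earlier blocks, and reduce to a small calculus problem. However, the specific reduction you write down has a genuine gap. You assert that ``the tail contributes at most $\frac{\sqrt k - t}{\sqrt k}$'' and then propose to optimize $a_1 + \frac{\sqrt k - t}{\sqrt k}$. This quantity is \emph{not} an upper bound on $\sum_j a_j$. The sortedness argument only gives $\|x_{I^{j}}\|_2 \le \|x_{I^{j-1}}\|_1/\sqrt{k}$, so summing from $j\ge 2$ yields $\sum_{j\ge 2} a_j \le \frac{1}{\sqrt k}\sum_{j\ge 1}\|x_{I^j}\|_1 \le 1$, which is your crude bound; shifting indices you get $\sum_{j\ge 3} a_j \le \frac{1}{\sqrt k}\sum_{j\ge 2}\|x_{I^j}\|_1 \le \frac{\sqrt k - t}{\sqrt k}$, which bounds only the blocks from the \emph{third} onward. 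A concrete counterexample to your stated tail bound: with $n=2k$ and $x=(1/\sqrt{2k},\dots,1/\sqrt{2k})$ one has $a_1=a_2=1/\sqrt 2$ and $t=\sqrt{k/2}$, so your expression gives $a_1+\frac{\sqrt k - t}{\sqrt k}=\tfrac{1}{\sqrt 2}+1-\tfrac{1}{\sqrt 2}=1$, yet $a_1+a_2=\sqrt 2>1$.

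The missing ingredient is a separate treatment of the second block. One needs both $\|x_{I^2}\|_2 \le t/\sqrt{k}$ (from sortedness: every entry of block~2 is at most the average entry $t/k$ of block~1) and $\|x_{I^2}\|_2 \le \sqrt{1-\gamma^2}$ (from the $\ell_2$ budget). The correct upper bound is therefore
\[
\sum_{j}a_j \;\le\; \gamma \;+\; \min\Bigl\{\tfrac{t}{\sqrt k},\ \sqrt{1-\gamma^2}\Bigr\} \;+\; 1-\tfrac{t}{\sqrt k},
\]
and it is the interaction of the two bounds on block~2 that forces the case split (on whether $t/\sqrt k \gtrless \sqrt{1-\gamma^2}$) and ultimately pins the maximum at $\gamma=\sqrt{k/(k+1)}$. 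Your steps (1)--(3) are fine in spirit, but step (4) as written collapses block~2 into the tail and loses exactly the term that carries the constant; without isolating block~2 and invoking the $\ell_2$ constraint on it, the optimization you describe cannot produce $1+\sqrt{k/(k+1)}$.
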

\proof{Proof.}
	First consider the case when $n \leq 2k$. In this case, $m \leq 2$. Consider the optimization problem:
\begin{eqnarray*}
\theta = \max & u + v\\
\text{s.t.} & u^2 + v^2 \leq 1 
\end{eqnarray*}
If we think of $\|x_{I^1}\|_2$ as $u$ and $\|x_{I^2}\|_2$ as $v$, then we see that  the above problem is a relaxation of \ref{eq:app-ratio} and therefore $\theta = \sqrt{2}$ is an upper bound on $\rho$. Noting that $\sqrt{2} \leq 1 + \sqrt{\frac{k}{k + 1}}$ for all $k \geq 1$, we have the result.  

Now we assume that $n > 2k$ and consequently $m > 2$.  From \ref{eq:app-ratio}, let $\|x_{I^1}\|_1 = t$ and $\|x_{I^1}\|_2 = \gamma$. Based on the standard relationship between $\ell_1$ and $\ell_2$ norm, we have $$\gamma \leq t \leq \sqrt{k} \gamma.$$
Since each coordinate of $x_{I^2}$ is smaller in magnitude than the average coordinate of $x_{I^{1}}$, we have
\begin{eqnarray}
\|x_{I^2}\|_2 \leq \sqrt{\left( \frac{\|x_{I^2}\|_1}{k}\right)^2 k} = \frac{t}{\sqrt{k}}. \label{eq:key2to1}
\end{eqnarray}
Also note that an alternative bound is given by $$\|x_{I^2}\|_2 \leq \sqrt{1 - \gamma^2}.$$ Using an argument similar to the one used to obtain (\ref{eq:key2to1}), we obtain that $$\sum_{i = 3}^m \|x_{I^{i}}\|_2\leq \sum_{i = 2}^{m-1} \sqrt{\left( \frac{\|x_{I^i}\|_1}{k}\right)^2 k}= \frac{1}{\sqrt{k}} \sum_{i = 2}^{m-1}\|x_{I^i}\|_1 \leq \frac{\sqrt{k} - t}{\sqrt{k}}.$$ Therefore we obtain
	\begin{align}
		\sum_{i = 1}^m \|x_{I^i}\|_2 & = \|x_{I^1}\|_2 + \|x_{I^2}\|_2 + \sum_{i = 3}^m \|x_{I^i}\|_2 \leq \gamma + \min\left\{ \frac{t}{\sqrt{k}}, \sqrt{1 - \gamma^2} \right\} + 1 - \frac{t}{\sqrt{k}}. \tag{Upper-Bound} \label{eq:app-UB}
	\end{align}
	Now we consider two cases:
	\begin{enumerate}
		\item If $\frac{t}{\sqrt{k}} \geq \sqrt{1 - \gamma^2}$, then \ref{eq:app-UB} becomes $\gamma + \sqrt{1 - \gamma^2} + 1 - \frac{t}{\sqrt{k}} $. Since $\gamma \geq \frac{t}{\sqrt{k}} \geq \sqrt{1 - \gamma^2}$, $\gamma$ satisfies $\gamma \geq \frac{1}{\sqrt{2}}$. Moreover we have that $t \geq \gamma, t \geq \sqrt{k(1 - \gamma^2)}$. Since $\gamma \leq \sqrt{k(1 - \gamma^2)}$ iff $\gamma \leq \sqrt{\frac{k}{k + 1}}$ we obtain two cases:
			\begin{align}
				\gamma + \sqrt{1 - \gamma^2} + 1 - \frac{t}{\sqrt{k}} & \leq \left\{
				\begin{array}{lll}
					\gamma + \sqrt{1 - \gamma^2} + 1 - \sqrt{1 - \gamma^2} & \text{ if } \gamma \in  \left[\frac{1}{\sqrt{2}}, ~ \sqrt{\frac{k}{k + 1}} \right] \\
					\gamma + \sqrt{1 - \gamma^2} + 1 - \frac{\gamma}{\sqrt{k}} & \text{ if } \gamma \in  \left[\sqrt{\frac{k}{k + 1}}, ~ 1 \right]
				\end{array}
				\right. \notag \\
				& \leq \left\{
				\begin{array}{lll}
					1 + \sqrt{\frac{k}{k + 1}}  \\
					1 + \sqrt{\frac{k}{k + 1}} 
				\end{array}
				\right. 
			\end{align}
			where (i) the first inequality holds when $\gamma = \sqrt{\frac{k}{k + 1}}$, (ii) the second inequality holds since the function $f(\gamma) = \gamma + \sqrt{1 - \gamma^2} + 1 - \frac{\gamma}{\sqrt{k}}$ achieves (local and global) maximum at point $\gamma = \sqrt{\frac{k + 1 - 2\sqrt{k}}{2k + 1 - 2\sqrt{k}}}$ which is less than $\sqrt{\frac{k}{k + 1}}$ for $k = 1, 2, \ldots$, thus $f(\gamma) \leq \max \left\{f\left(\sqrt{\frac{k}{k + 1}} \right), f(1) \right\} = 1 + \sqrt{\frac{k}{k + 1}}$ for part $\gamma \in  \left[\sqrt{\frac{k}{k + 1}}, ~ 1 \right]$.
		\item If $\frac{t}{\sqrt{k}} \leq \sqrt{1 - \gamma^2}$, then  \ref{eq:app-UB} becomes $\gamma + 1$. Note now that $\frac{\gamma}{\sqrt{k}} \leq \frac{t}{\sqrt{k}}\leq \sqrt{1 - \gamma^2}$, implies that $\gamma$ satisfies $\gamma \leq \sqrt{\frac{k}{k + 1}}$. Therefore, $1 + \gamma \leq 1 + \sqrt{\frac{k}{k + 1}}$.  
	\end{enumerate}
	Therefore, this upper bound holds. 
\Halmos \endproof
Therefore, we can show Theorem \ref{thm:app-ratio} holds.
\proof{Proof of Theorem \ref{thm:app-ratio}.}
Since $T_k \subseteq \rho \cdot \text{Conv}\left( S_k \right)$ with $\rho \leq 1 + \sqrt{\frac{k}{k + 1}}$ and the objective function is maximizing a convex function, we obtain that $\lambda^k(A) \leq \text{OPT}_{\ell_1} \leq \rho^2 \cdot \lambda^k(A)$. 
\Halmos \endproof

\section{Numerical experiments} \label{sec:num-exp}
In this section, we report results on our empirical comparison of the performances of \ref{eq:convex-IP} method, \ref{eq:pert-convex-IP} method and the \ref{eq:SDP} relaxation method. 

\subsection{Hardware and Software}
All numerical experiments are implemented on MacBookPro13 with 2 GHz Intel Core i5 CPU and 8 GB 1867 MHz LPDDR3 Memory. Convex-IPs were solved using Gurobi 7.0.2. SDPs were solved using Mosek 8.0.0.60.

\subsection{Obtaining primal solutions} \label{sec:SPCA-heuristic}
We used a heuristic, which is very similar to the truncated power method~\cite{yuan2013truncated}, but has some advantages over the truncated power method. Given $v \in \mathbb{R}^n$, let $I_k(v)$ be the set of indices corresponding to the top $k$ entries of $v$ (in absolute value).

We start with a random initialization $x^0$ such that $\|x^0\|_2 = 1$, and set $I^0 \gets I_k(V^{\top}x^0)$ where $V$ is a square root of $A$, i.e. $A = V^{\top}V$. In the 
$i^{\text{th}}$ iteration, we update  
		\begin{align}
			I^i \gets I_k(V^{\top} x^i), ~ x^{i + 1} \gets {\color{black}\argmax_{\|x\|_2 = 1} }~ x^{\top} A_{I^i} x 
		\end{align}
		where $A_I \in \mathbb{R}^{n \times n}$ is the matrix with $[A_I]_{i,j} = [A]_{i,j}$ for all $i,j \in I$ and $[A_I]_{i,j} = 0$ otherwise. It is easy to see that $x^1, x^2, \ldots$ satisfy the condition $\|x\|_0 \leq k$. Moreover, using the fact $A$ is a PSD matrix, it is easy to verify that $(x^{i +1})^{\top} A x^{i+1} \geq (x^{i})^{\top} A x^{i}$ for all $i$. Therefore, in each iteration, the above heuristic method leads to an improved feasible solution for the \ref{eq:SPCA} problem. 

Our method has two clear advantages over the truncated power method:
\begin{itemize}
\item We use standard and efficient numerical linear algebra methods to compute eigenvalues of small $k\times k$ matrices. 
\item The termination criteria used in our algorithm is also simple: if  $I^i = I^{i'}$ for some $i' < i$, then we stop. Clearly, this leads to a finite termination criteria.
\end{itemize}

\textcolor{black}{In practice, we stop using a stopping criterion based on improvement and number of iterations instead of checking $I^i = I^{i'}$. Details are presented in Algorithm \ref{algo:heuristic}.}
	
\begin{algorithm}
\caption{Primal Algorithm} \label{algo:heuristic}
\begin{algorithmic}[1]
\State \emph{Input}: Sample covariance matrix $A$, cardinality constraint $k$, initial vector $x^0$.
\State \emph{Output}: A feasible solution $x^{\ast}$ of \ref{eq:SPCA}, and its objective value. 
\Function{Heuristic Method}{$A, k, x^0$}\label{function:heuristic}
\State Start with an initial (randomized) vector $x^0$ such that $\|x^0\|_2 = 1$ and $\|x^0\|_0 \leq k$. 
\State Set the initial current objective value $\text{Obj} \gets (x^0)^{\top} A x^0$.
\State Set the initial past objective value $\tilde{\text{Obj}}\gets 0$.
\State Set the maximum number of iterations be $i^{\max}$. 
\While{$\text{Obj} - \tilde{\text{Obj}} > \epsilon$ and $i \leq i^{\max}$}
\State Set $\tilde{\text{Obj}} \gets \text{Obj}$.
\State Set $I^i \gets I_k(V^{\top} x^i)$.
\State Set $x^{i + 1} \gets \arg\max_{\|x\|_2 = 1} x^{\top} A_{I^i} x$.
\State Set $\text{Obj} \gets (x^{i + 1})^{\top} A x^{i + 1}$.
\EndWhile
\State \Return $x^{\ast}$ as the final $x$ obtained from while-loop, and \text{Obj}.
\EndFunction
\end{algorithmic}
\end{algorithm}
We use the values of $\epsilon = 10^{-6}$ and $i^{\max} = 20$ in our experiments in Algorithm~\ref{algo:heuristic}. We repeat this algorithm with multiple random initializations. We repeat 20 times and take the best solution. We emphasize that Algorithm \ref{algo:heuristic} may not lead to a global solution of \ref{eq:SPCA}.

Our Algorithm may also be interpreted as a version of the ``alternating method" used regularly as a heuristic for bilinear programs as the sparse PCA problem can be equivalently rewritten as $\textup{max} \{x^{\top}Ay \,|\, \|x\|_2 = \|y\|_2 = 1, \|x\|_0 \leq k, \|y\|_0 \leq k\}$. We have compared our primal method to two standard heuristics for finding primal feasible solutions of the sparse PCA problems in the literature: truncated power method (TPM, \cite{yuan2011truncated}), generalized power method (GPM, \cite{journee2010generalized}) with $\ell_0$-penalty. The performances of all these methods are quite similar to our method (in terms of primal objective function values) on the real instances; see details in Appendix~\ref{sec:LB-comparison}.

\subsection{Implementation of \ref{eq:convex-IP} model and \ref{eq:pert-convex-IP} model}
\subsubsection{Deciding $\lambda$, $N$}
\begin{enumerate}	
	\item Deciding $\lambda$: The size of the set $\{i: \lambda_i > \lambda\}$ denoted by $I_{\text{pos}}$ plays an important role for the computational tractability of our method. \textcolor{black}{So our algorithm inputs an initial value, $I^{\text{ini}}_{\text{pos}}$ }. From the primal heuristic, we obtain a lower bound $\text{LB}^{\text{primal}}$ on $\lambda^k(A)$.  Let 
$$\lambda_{i_1} \geq \lambda_{i_2} \geq \dots \geq \lambda_{i_n}$$
be the eigenvalues of $A$. If $\lambda_{i_{I^{\text{ini}}_{\text{pos}}}} < \text{LB}^{\text{primal}}$, then we set $\lambda \triangleq \lambda_{i_{I^{\text{ini}}_{\text{pos}}}}$. On the other hand, if $\lambda_{i_{I^{\text{ini}}_{\text{pos}}}} > \text{LB}^{\text{primal}}$, then \textcolor{black}{let $l$ be the smallest index} such that $\lambda_{i_{l}} > \text{LB}^{\text{primal}}$ and we set $\lambda \triangleq \lambda_{i_{l }}$.

	\item Deciding $N$: In practice, $\theta_i$ was found to be significantly smaller than $1$. So we used a value of $N =3 $ in all our experiments. 
\end{enumerate}
\subsubsection{Final details}
A total time of $7200$ seconds were given to each instance for running the convex IP (any extra time reported in the tables is due to running time of singular value decomposition and primal heuristics). We have run all our experiments with $k = 10, 20$. For the \ref{eq:convex-IP} method, we use: $(I^{\text{ini}}_{\text{pos}}, N) = (10, 3)$. For the \ref{eq:pert-convex-IP} method, we 
let ``$\text{iter}$'' denote the maximum number of iterations. We used three settings in our experiments:
\begin{align*}
	(I^{\text{ini}}_{\text{pos}}, N, \text{iter})  \in \left\{(5, 3, 10), ~ (10, 3, 3), ~ (15, 3, 2)  \right\}.
\end{align*}

The overall algorithms using the \ref{eq:pert-convex-IP} model and the \ref{eq:convex-IP} model are presented in Appendix~\ref{sec:appconIP}.

\subsection{Data Sets}
We conduct numerical experiments on two types of data sets. Details of these two types of data sets are presented in Appendix~\ref{sec:data-set}. 
\begin{itemize}
	\item \textbf{Artificial data set:} Tables~\ref{tab:toy-example}, \ref{tab:toy-example-20}, \ref{tab:artificial-data},  \ref{tab:artificial-data-20}, \ref{tab:controlling-sparsity}, \ref{tab:controlling-sparsity-20} present results for artificial/synthetic datasets. 
	\item \textbf{Real data set:} Tables~\ref{tab:pitprops}, \ref{tab:bio-internet-data}, \ref{tab:bio-internet-data-20} show results for real data sets.
\end{itemize}

\subsection{Description of the rows/columns in the tables}
Note that the labels for each of the columns in Tables~\ref{tab:toy-example}, \ref{tab:toy-example-20}, \ref{tab:artificial-data},  \ref{tab:artificial-data-20}, \ref{tab:controlling-sparsity}, \ref{tab:controlling-sparsity-20}, \ref{tab:pitprops}, \ref{tab:bio-internet-data}, \ref{tab:bio-internet-data-20} are as follows:
\begin{itemize}
	\item \textbf{Case:} The first part is a name. `\textbf{Case 1}' or `\textbf{Case 2}' denotes the instance number. The second part is the format $(\text{size}, \text{cardinality})$ which denotes the number of columns/rows of the $A$ matrix and the right-hand-side of the $\ell_0$ constraint of the original \ref{eq:SPCA} problem. 
	\item \textbf{LB-$\ell_0$:} denotes the lower bound on the \ref{eq:SPCA} problem obtained from the (heuristic) Algorithm~\ref{algo:heuristic} in Section \ref{sec:SPCA-heuristic}.
	\item \textbf{\#-$\lambda$:} denotes the size of set $\{i \,|\, \lambda_i > \text{LB-}\ell_0\}$ where $\lambda_i$ are the eigenvalues of the covariance matrix. 
	\item \textbf{Convex-IP-$\ell_0$, Pert-Convex-$\text{IP}_0$:} \textcolor{black}{denote the \ref{eq:convex-IP} and the \ref{eq:pert-convex-IP} models.}
	\item \textbf{SDP:} denotes the semidefinite programming relaxation  solved using Mosek. In Appendix~\ref{sec:UB-comparison}, we compare the dual bounds by alternative methods \cite{de2018using} to solve the SDP-relaxation for the real instances. Our conclusion based on our  implementation of other algorithms is that when Mosek solves the instance, the best dual bound is obtained from Mosek. For some slightly larger instances, other algorithms might produce dual bounds. Usually, these dual bounds are extremely poor in quality. Moreover, these other methods do not scale up to instances with $d \geq 1000$. Therefore, we have chosen to present results only from Mosek in Tables~\ref{tab:toy-example}, \ref{tab:toy-example-20}, \ref{tab:artificial-data},  \ref{tab:artificial-data-20}, \ref{tab:controlling-sparsity}, \ref{tab:controlling-sparsity-20}, \ref{tab:pitprops}, \ref{tab:bio-internet-data}, \ref{tab:bio-internet-data-20}; and the remaining results are relegated to Appendix~\ref{sec:UB-comparison}.
	\item \textbf{UB:} denotes the upper bound obtained from current dual bound method (i.e., Convex-IP-$\ell_0$, Pert-Convex-$\text{IP}_0$, SDP). 
	\item \textbf{gap:} denotes the approximation ratio (duality gap) obtained by the formula $\textbf{gap}= \frac{\text{UB} - \text{LB-}\ell_0}{\text{LB-}\ell_0}$. 
	\item \textbf{time:} denotes the total running time---we present the overall running time due to singular value decomposition, heuristic method to obtain primal solutions, and solvers (Gurobi, Mosek) used to solve integer programming (set to terminate within 7200 seconds). 
\end{itemize}

The three rows  corresponding to Pert-Convex-IP, corresponds to experiments with three settings: 
$(I_{\text{pos}}, N, \text{iter}) = \left\{ (5, 3, 10), ~ (10, 3, 3), ~ (15, 3, 2) \right\}. $

\subsection{Conclusions and summary of numerical experiments}
Based on numerical results reported in Tables~\ref{tab:toy-example}, \ref{tab:toy-example-20}, \ref{tab:artificial-data},  \ref{tab:artificial-data-20}, \ref{tab:controlling-sparsity}, \ref{tab:controlling-sparsity-20}, \ref{tab:pitprops}, \ref{tab:bio-internet-data}, \ref{tab:bio-internet-data-20} we draw some preliminary observations:
\begin{enumerate}
\item \textbf{Size of instances solved:}
\begin{itemize}
\item SDP: Because of limitation of hardware and software, the SDP relaxation method does not solve instances with input matrix of size greater than or equal to $300 \times 300$. 
\item Convex-IP: The convex IP shows better scalability than the SDP relaxation and produces dual bounds for instances with input matrix of size up to $500 \times 500$. 
\item Pert-Convex-IP: The perturbed convex IP scales significantly better that the other methods. While we experimented with instances up to size $2000 \times 2000$, we believe this method will easily scale to larger instances, when $k = 10, 20$ with $(I_{\text{pos}}, N)$ being chosen appropriately.       
\end{itemize} 
\item \textbf{Quality of dual bound:}
\begin{itemize}
\item SDP vs Best of $\{$Convex-IP, Pert-Convex-IP$\}$: While on some instances SDP obtained better dual bounds, this was not the case for all instances. For example, on the `controlling sparsity' random instances and both the real data sets Eisen-1 and Eisen-2, SDP bounds are weaker. 
\item Convex-IP vs Pert-Convex-IP: If the convex IP solved within the time limit, then usually the bound is better than that obtained for Pert-Convex-IP. In other cases,  Pert-Convex-IP performs better as it is easy to solve and usually solves within 1 hour.  
\item Overall gaps for Best of $\{$Convex-IP, Pert-Convex-IP$\}$: Except for the random instances of type `controlling sparsity' of size $1000\times 1000$, and Lymphoma data set, in all other instances at least one method had a gap less that $10\%$. 
\item Cardinality 10 vs Cardinality 20: When the cardinality budget is allowed to increase, based on our numerical results, we can see that the running time of our \ref{eq:convex-IP} and \ref{eq:pert-convex-IP} methods do not change a lot, since the parameter of cardinality $k$ of \ref{eq:convex-IP} and \ref{eq:pert-convex-IP} method only influences the linear constraint $\sum_{i = 1}^n y_i \leq \sqrt{k}$, which is more robust to changes in the value of the cardinality $k$ than typical cardinality constraint in interger programming.
\end{itemize}

\item {\color{black}\textbf{Comparison of different numbers of splitting points (parameter $N$):} We compare the performances of the Pert-Convex-$\text{IP}_0$ method under distinct initialization splitting points with $(I_{\text{pos}}, N_{\text{ini}}, \text{\# of iterations}) = (5,1,1), (5,3,1), (5,5,1)$, see Table~\ref{tab:comparison-bio}. We present results with just one round of iterations to clearly understand the effect of number of splitting points. We observe that the gap decreases when the number of splitting points increases. On the other hand, the running time increases with the number of splitting points incereasing. However increasing splitting points from 3 to 5 does not significantly improve the bounds.}

\begin{table}[ht] 
\caption {Comparison of distinct splitting points} \label{tab:comparison-bio}
\begin{center}
\resizebox{1 \textwidth}{!}{\begin{tabular}{|l|l|ll|ll|ll|l|l|l|l|l|} \hline
	
\multirow{2}*{\bf Instance $\backslash$ Splitting points }& \multirow{2}*{LB} & \multicolumn{2}{|l|}{\bf $(5,1,1)$} & \multicolumn{2}{|l|}{\bf $(5,3,1)$} & \multicolumn{2}{|l|}{\bf $(5,5,1)$}\\ \cline{3 - 8} 
& & gap & Time & gap & Time & gap & Time \\ \specialrule{.15em}{.05em}{.05em} 
\bf Eisen-1 (79, 10) & 17.335 & 2.619 \% & 2.762 &  0.588 \% & 3.049 & \bf 0.329 \% & 3.127  \\ \hline
\bf Eisen-2 (118, 10) & 11.718 &  13.245 \% & 5.738 &  4.736 \% & 7.194 &  \bf 4.207 \% & 7.78 \\ \hline
\bf Colon (500, 10) & 2641.229 & 30.652 \% & 72.802 & 27.755\% & 73.149 & \bf 27.673 \% & 76.115 \\ \hline 
\bf Lymphoma (500, 10) & 6008.741 &  52.412 \% & 95.561 & 43.956 \% &  83.902 & \bf 43.587 \% & 86.422 \\ \hline
\bf Reddit (2000, 10) & 1052.934 &  8.548 \% & 1628.128 & 4.136 \% & 1450.775 & \bf 3.999 \% & 1488.936 \\ \hline 
\end{tabular}}
\end{center}
\end{table}

{\color{black}
\item \textbf{Comparison between $\ell_1$-relaxation and original sparsity constraint:} To further illustrate why we prescribe the use of $\ell_1$ relaxation to obtain dual bounds of SPCA, we compare the following two models: (1) The \ref{eq:pert-convex-IP} model used in the paper; (2) The same ``perturbed convex IP" where the $\ell_1$ constraint is replaced by a cardinality constraint (with the introduction of binary variables), denoted as \ref{eq:pert-convex-IP-0}. 
\begin{align*}
	\begin{array}{rllll}
		\max & ~ {\lambda} + \sum_{i \in \{i : \lambda_i > {\lambda}\}} (\lambda_i - {\lambda}) \xi_i - s & \\
		\text{s.t. } & \left\{
		\begin{array}{lll}
		g_i = x^{\top} v_i \\
		- \theta_i \leq g_i \leq \theta_i
		\end{array}
		\right. & i \in \{i: \lambda_i > {\lambda} \} \\
		& \left\{
		\begin{array}{lll}
		g_i = \sum_{j = - N}^N \gamma_i^j \eta_i^j \\
		\xi_i = \sum_{j = - N}^N (\gamma_i^j)^2\eta_i^j \\
		(\eta_i^{-N}, \ldots, \eta_i^N) \in \text{SOS-2} 
		\end{array}
		\right. & i \in \{i: \lambda_i > {\lambda} \} \\
		& \left\{
		\begin{array}{lll}
		\sum_{i = 1}^n x_i^2 \leq 1 \\
		\sum_{i \in \{i: \lambda_i > {\lambda}\}} g_i^2 \leq 1 - \frac{s}{{\lambda} - \bar{\lambda}} \\
		1 - \frac{s}{{\lambda} - \bar{\lambda}} \leq \sum_{i \in \{i: \lambda_i > {\lambda}\}} \xi_i \leq 1 + \sum_{i \in \{i: \lambda_i > {\lambda}\}}\frac{\theta_i^2}{4 N^2} - \frac{s}{{\lambda} - \bar{\lambda}}
		\end{array}
		\right. & \\
		& \left\{
		\begin{array}{lll}
		\sum_{i = 1}^n z_i \leq k\\
		z_i \geq x_i, z_i \geq - x_i, z_i \in \{0, 1\}, \forall i \in [n] \\
		\end{array} 
		\right. & \text{($\ell_0$ constraint)} \
	\end{array} \tag{Model-with-$\ell_0$} \label{eq:pert-convex-IP-0}
\end{align*}
We tested on the real-life data for $k = 10$ and $k =20$ in Table~\ref{tab:real-10-l1-l0-comparison}, Table~\ref{tab:real-20-l1-l0-comparison}. All parameters $(I_{\text{pos}}, N_{\text{ini}}, \# \text{iter})$ are also listed in Table~\ref{tab:real-10-l1-l0-comparison}, Table~\ref{tab:real-20-l1-l0-comparison} which are the same as the parameters that used in the Section 4.3.2 (except for $\# \text{iter} = 1$ here).  

\begin{table}[!h]
\caption{Comparison: Real Instances, cardinality parameter $k = 10$}
\label{tab:real-10-l1-l0-comparison} 
\begin{center} 
\resizebox{ 0.9\textwidth}{!}{\begin{tabular}{|l|l|l|l|l|l|l|l|l|l|l|l|l|l|l|l|} \hline

\multirow{2}*{\bf (size, index)} & \multirow{2}*{\bf($I_{\text{pos}}$, $N_{\text{ini}}$, \# iter)}  & \multicolumn{2}{|l|}{\ref{eq:pert-convex-IP}} & \multicolumn{2}{|l|}{Model-with-$\ell_0$}  \\ \cline{3 - 6}
 & & Gap & Time & Gap & Time  \\ \hline 
 
 Eisen Data 1 (79) & (5, 3, 1) & 0.588 \% & 2.86 & \bf 0.392 \% & 8.591 \\ 
  & (10, 3, 1) & 0.796 \% & 3.863 &  0.525 \% & 99.168 \\
  & (15, 3, 1) & 0.865 \% & 10.049 & 0.588 \% & 685.519 \\ \hline
  
 Eisen Data 2 (118) & (5, 3, 1) & 4.736 \% & 6.576 &  4.48 \% & 86.251 \\ 
  & (10, 3, 1) & 2.364 \% & 27.525 &  2.321 \% & 2105.51 \\  
  & (15, 3, 1) & 1.997 \% & 195.356 & \bf 1.971 \% & 5935.205 \\ \hline 
 
 Matrix CovColon (500) & (5, 3, 1) & 27.755 \% & 90.362 &  4.48 \% & 86.251 \\ 
  & (10, 3, 1) & 2.364 \% & 27.525 & \bf 2.321 \% & 2105.51 \\ 
  & (15, 3, 1) & 5.349 \% & 2610.972 & 11.51 \% & 7288.835 \\  \hline 
  
 Matrix LymphomaCov (500) & (5, 3, 1) &  43.956 \% & 87.159 & 47.93 \% & 7305.024 \\ 
  & (10, 3, 1) &  23.662 \% & 355.236 & 39.431 \% & 7289.135 \\ 
  & (15, 3, 1) & \bf 17.863 \% & 4224.933 & 39.526 \% & 7309.047 \\ \hline 
  
 Reddit (2000) & (5, 3, 1) &4.136 \% & 1867.157 & 5.826 \% & 8765.165 \\ 
  & (10, 3, 1) & \bf 3.446 \% & 1831.221 & 8.867 \% & 8638.037 \\ 
  & (15, 3, 1) & 3.523 \% & 3726.841 & 10.356 \% & 8542.98 \\ \hline

\end{tabular}}
\end{center}
\end{table}

\begin{table}[!h]
\caption{Comparison: Real Instances, cardinality parameter $k = 20$} 
\label{tab:real-20-l1-l0-comparison} 
\begin{center} 
\resizebox{ 0.9\textwidth}{!}{\begin{tabular}{|l|l|l|l|l|l|l|l|l|l|l|l|l|l|l|l|} \hline 

\multirow{2}*{\bf (size, index)} & \multirow{2}*{\bf($I_{\text{pos}}$, $N_{\text{ini}}$, \# iter)}  & \multicolumn{2}{|l|}{\ref{eq:pert-convex-IP}} & \multicolumn{2}{|l|}{Model-with-$\ell_0$}  \\ \cline{3 - 6}
 & & Gap & Time & Gap & Time  \\ \hline 
 
 Eisen Data 1 (79) & (5, 3, 1) & \bf 0.559 \% & 3.183 & 1.298 \% & 7204.468 \\ 
  & (10, 3, 1) &  0.813 \% & 20.568 & 2.985 \% & 7204.059 \\ 
  & (15, 3, 1) &  0.886 \% & 1016.839 & 5.519 \% & 7229.677 \\ \hline  
  
 Eisen Data 2 (118) & (5, 3, 1) &  1.837 \% & 6.48 & 2.65 \% & 8062.349 \\ 
  & (10, 3, 1) & 1.18 \% & 46.001 & 4.223 \% & 7211.949 \\ 
  & (15, 3, 1) & \bf1.087 \% & 443.759 & 3.664 \% & 7205.331  \\ \hline 
  
 Matrix CovColon (500) & (5, 3, 1) &  17.014 \% & 75.267 & 18.539 \% & 7268.644 \\ 
  & (10, 3, 1) & 6.528 \% & 372.802 & 12.903 \% & 7271.37 \\  
  & (15, 3, 1) &  \bf6.066 \% & 7275.58 & 12.737 \% & 7273.013 \\ \hline 
  
 Matrix LymphomaCov (500) & (5, 3, 1) & 24.042 \% & 91.786 & 26.622 \% & 7288.825 \\ 
  & (10, 3, 1) & 14.498 \% & 214.784 & 24.381 \% & 7302.236 \\ 
  & (15, 3, 1) & \bf11.811 \% & 3349.161 & 35.286 \% & 8831.009 \\ \hline 
 
 Reddit (2000) & (5, 3, 1) & \bf4.286 \% & 4652.869 & 7.139 \% & 8708.004 \\ 
  & (10, 3, 1) & 4.288 \% & 1677.933 & 9.647 \% & 8546.823 \\ 
  & (15, 3, 1) & 4.776 \% & 4274.327 & 12.157 \% & 8560.558 \\ \hline 

\end{tabular}}
\end{center}
\end{table}

Based on the Table~\ref{tab:real-10-l1-l0-comparison} \ref{tab:real-20-l1-l0-comparison}, following conclusions can be obtained: 
\begin{enumerate}
	\item For instances with relative small size ($\leq 500$): the upper bounds (UB) obtained from \ref{eq:pert-convex-IP-0} is a slightly better than the upper bounds (UB) from \ref{eq:pert-convex-IP}, but the running time used for \ref{eq:pert-convex-IP-0} is much longer than \ref{eq:pert-convex-IP}. 
	\item For instances with relative large size ($\geq 500$): both the upper bounds and the running time obtained from \ref{eq:pert-convex-IP} method are significantly better than those  obtained from \ref{eq:pert-convex-IP-0}. In another words, the \ref{eq:pert-convex-IP} is more scalable. 
\item Effect of $k$: We see that for $k =20$ the performance of \ref{eq:pert-convex-IP} method is even more dramatically better than that of \ref{eq:pert-convex-IP-0}. In fact, now \ref{eq:pert-convex-IP} beats \ref{eq:pert-convex-IP-0} on quality of bound and time even for small ($\leq 500$) instances. Indeed, this is another nice property of the $\ell_1$-relaxation, namely it handles larger values of $k$ more robustly.
\end{enumerate}
}
\end{enumerate}

\begin{table}[H]
\caption {Spiked Covariance Recovery - Cardinality 10} \label{tab:toy-example}
\begin{center}
\resizebox{0.9 \textwidth}{!}{\begin{tabular}{|l|l|l|l|l|l|l|l|l|l|l|l|l|l|l|l|l|l|} \hline
	
\multirow{2}*{\bf Case }& \multirow{2}*{\bf LB-$\ell_0$}& \multirow{2}*{\bf \#-$\lambda$} & \multicolumn{2}{|l|}{\bf Convex-IP-$\ell_0$} & \multicolumn{2}{|l|}{\bf Pert-Convex-$\text{IP}_0$} & \multicolumn{2}{|l|}{\bf SDP} \\ \cline{4 - 9} 

& & &  gap & Time &  gap & Time & gap & Time \\ \hline

\bf Case 1 (200, 10) & 511.95 & 1 &  0.005 \% & 380 &   0.007 \% & 76 & \bf 0.001 \% & 1277 \\ 
\bf  &  &  &  &  &  0.005 \% & 230 & &    \\ 
\bf  &  &  &  &  &  0.005 \% & 1605 & &  \\
\hline
\bf Case 2 (200, 10) & 592.45 & 1 &  0.003 \% & 469 &   0.006 \% & 615 &  \bf 0.002 \% & 1458  \\
\bf  &  &  &  &  &   0.006  \% & 236 & &  \\ 
\bf  &  &  &  &  &  0.005 \% & 325 &  &   \\
\hline
\bf Case 1 (300, 10) & 414.04 & 1 &  \bf 0.027 \% & 1692 &  0.03 \% & 642 & NaN & - \\ 
\bf  &  &  & &  &    0.029 \% & 407 & & \\ 
\bf  &  &  & &  &    0.027 \% & 796 &  & \\
\hline
\bf Case 2 (300, 10) & 568.56 & 1 & \bf 0.011 \% & 1067 &  0.016 \% & 82 &NaN & -  \\ 
\bf  &  &  & &  &    0.014 \% & 493 & &\\ 
\bf  &  &  & &  &    0.012 \% & 942 & & \\
\hline
\bf Case 1 (400, 10) & 478.24 & 1 & \bf 0.025 \% & 2598 &   0.04 \% & 793 & NaN & -  \\ 
\bf  &  &  & &  &     0.03\% & 610 & & \\ 
\bf  &  &  & &  &    0.03\% & 1495 & & \\
\hline
\bf Case 2 (400, 10) & 426.91 & 1 & \bf 0.037 \% & 3374 &  0.06 \% & 181 & NaN & -  \\ 
\bf  &  &  & &  &   0.05 \% & 846 & & \\ 
\bf  &  &  & &  &   0.04 \% & 2137 & & \\
\hline
\bf Case 1 (500, 10) & 256.82 & 1 & \bf 0.164 \% & 7525 &  0.21 \% & 1345 & NaN & - \\ 
\bf  &  &  & &  &    0.18 \% & 1512 & & \\ 
\bf  &  &  & &  &   0.17 \% & 3279 & & \\
\hline
\bf Case 2 (500, 10) & 551.74 & 1 & \bf 0.029 \% & 7196 &  0.04 \% & 152 & NaN & - \\ 
\bf  &  &  & &  &   0.04 \% & 725 & &  \\ 
\bf  &  &  & &  &    0.03 \% & 1694 & & \\
\hline
\bf Case 1 (1000, 10) & 315.16 & 1 & NaN & - & 0.57 \% & 1147 & NaN & - \\ 
\bf  &  &  & &  &    0.52 \% & 776 & &  \\ 
\bf  &  &  & &  &    \bf 0.53 \% & 3633 & &  \\
\hline
\bf Case 2 (1000, 10) & 383.44 & 1 & NaN & - & 0.34 \% & 2745 & NaN & - \\ 
\bf  &  &  & &  &   \bf 0.32 \% & 403 &  &  \\ 
\bf  &  &  & &  &   0.34 \% & 3643 & & \\
\hline
\end{tabular}}
\end{center}
\end{table}

\pagebreak

\begin{table}[H]
\caption {Spiked Covariance Recovery - Cardinality 20} \label{tab:toy-example-20}
\begin{center}
\resizebox{0.9 \textwidth}{!}{\begin{tabular}{|l|l|l|l|l|l|l|l|l|l|l|l|l|l|l|l|} \hline
	
\multirow{2}*{\bf Case }& \multirow{2}*{\bf LB-$\ell_0$}& \multirow{2}*{\bf \#-$\lambda$} & \multicolumn{2}{|l|}{\bf Convex-IP-$\ell_0$} & \multicolumn{2}{|l|}{\bf Pert-Convex-$\text{IP}_0$} & \multicolumn{2}{|l|}{\bf SDP} \\ \cline{4 - 9} 

& & &  gap & Time & gap & Time & gap & Time \\ \hline

\bf Case 1 (200, 20) & 516.756 & 1 & 2.05 \% & 493 &  \bf 0.008 \% & 746 & - \% & -  \\ 
\bf  &  &  & &  & 0.073 \% & 3116 &  &  \\ 
\bf  &  &  & &  & 0.573 \% & 7214 &  &  \\
\hline

\bf Case 2 (200, 20) & 593.651 & 1 & 0.98 \% & 1847 &  \bf 0.005 \% & 323 & -\% & -\\ 
\bf  &  &  & &   &  0.006 \% & 5992 &  &   \\ 
\bf  &  &  & &   &  0.102 \% & 7215 &  &  \\
\hline

\bf Case 1 (300, 20) & 499.92 & 1 & 0.70 \% & 1848 &  \bf 0.018 \% & 745& -\% &- \\ 
\bf  &  & &  &  & 0.021 \% & 4799 &  &   \\ 
\bf  &  & &  &  &  0.399 \% & 7230 &  &  \\
\hline

\bf Case 2 (300, 20) & 600.553 & 1 & 1.13 \% & 1771 &  0.014 \% & 530 & -\% & - \\ 
\bf  &  & &  &  &  \bf 0.013 \% & 2964&  &  \\ 
\bf  &  & &  &  &  0.272 \% & 7232 &  &  \\
\hline

\bf Case 1 (400, 20) & 483.995 & 1 & 2.74 \% & 6398 &  \bf 0.034 \% & 1186 & -\% -& \\ 
\bf  &  & & &  &  0.168 \% & 7262 &  &   \\ 
\bf  &  & & &  &  0.832 \% & 7255 &  &   \\
\hline

\bf Case 2 (400, 20) & 428.275 & 1 & 1.92 \% & 7426 &  \bf 0.045 \% & 576 & -\% & - \\ 
\bf  &  & &  &  &  0.074 \% & 6965 &  &   \\ 
\bf  &  & &  &  &  0.53 \% & 7251& - &  \\
\hline

\bf Case 1 (500, 20) & 294.35 & 1 & 1.19 \% & 7027 &  \bf 0.162 \% & 1341 & -\% & - \\ 
\bf  &  & &  &  &   0.165 \% & 6087&  &    \\ 
\bf  &  & &  &  &   1.285 \% & 7294 &  &    \\
\hline

\bf Case 2 (500, 20) & 571.15 & 1 & 1.96 \% & 4628 &  \bf 0.039 \% & 1862&- \% & - \\ 
\bf  &  &  & &  &   0.2 \% & 1935&  &   \\ 
\bf  &  &  & &  &   1.215 \% & 3360 &  &  \\
\hline

\bf Case 1 (1000, 20) & 414 & 1 & - \% & - &  0.53 \% & 3133 & - \% & - \\ 
\bf  &  & &  &  &  \bf 0.50 \% & 2760 &  &   \\ 
\bf  &  & &  &  &  \bf 0.50 \% & 5844 &  &   \\
\hline

\bf Case 2 (1000, 20) & 391.795 & 1 & - \% & - & \bf 0.311 \% & 4756 & -\% &- \\ 
\bf  &  & & &  &   0.74 \% & 3596 &    &  \\ 
\bf  &  & & &  &   2.906 \% & 7516&    &  \\
\hline

\end{tabular}}
\end{center}
\end{table}

\pagebreak

\begin{table}[H]
\caption {Synthetic Example - Cardinality 10} \label{tab:artificial-data}
\begin{center}
\resizebox{0.9 \textwidth}{!}{\begin{tabular}{|l|l|l|l|l|l|l|l|l|l|l|l|l|l|l|l|} \hline
	
\multirow{2}*{\bf Case }& \multirow{2}*{\bf LB-$\ell_0$} & \multirow{2}*{\bf \#-$\lambda$} & \multicolumn{2}{|l|}{\bf Convex-IP-$\ell_0$} & \multicolumn{2}{|l|}{\bf Pert-Convex-$\text{IP}_0$} & \multicolumn{2}{|l|}{\bf SDP}\\ \cline{4 - 9}

& & & gap & Time & gap & Time & gap & Time \\ \specialrule{.15em}{.05em}{.05em}  

	\bf Case 1 (200, 10) & 5634.143 & 3 & 11.884 \% & 7205 & 0.14 \% & 38 & \bf 0.10 \% & 1092  \\ 
	\bf & & & & & 0.15 \% & 16 &  &   \\
	\bf & & & & & 0.15 \% & 186 &  &  \\
	\hline 
	\bf Case 2 (200, 10) & 7321.23 & 3 & 1.703 \% & 7205 &  0.13 \% & 23 & \bf 0.09 \% & 1086  \\ 
	\bf & & & & &  0.13 \% & 13 &  &     \\
	\bf & & & & &  0.12 \% & 47 &  &     \\
	\hline 
	\bf Case 1 (300, 10) & 4157.46 & 3 & 51.072 \% & 7210 & \bf 0.27 \% & 83 & NaN & - \\ 
	\bf & & & & &  0.29 \% & 21 &  &     \\
	\bf & & & & &  0.27 \% & 486 &  &    \\
	\hline 
	 \bf Case 2 (300, 10) & 5135.50 & 3 & 65.275 \% & 7210  & 0.23 \% & 62 & NaN & - \\ 
	\bf & & & & &  \bf 0.22 \% & 59 &  &   \\
	\bf & & & & &  0.23 \% & 58 &  &   \\
	 \hline
	 \bf Case 1 (400, 10) & 6519.37 & 3 & 55.308 \% & 7219 &  \bf 0.22 \% & 98 & NaN & -  \\ 
	\bf & & & & &  0.23 \% & 23 &  &    \\
	\bf & & & & &  \bf 0.22 \% & 349 &  &   \\
	 \hline 
	 \bf Case 2 (400, 10) & 5942.05 & 3 &  45.396 \% & 7218 &  \bf 0.36 \% & 56 & NaN & - \\ 
	\bf & & & & &  0.42 \% & 29 &  &   \\
	\bf & & & & &  0.41 \% & 364 &  &  \\ 
	\hline
	 \bf Case 1 (500, 10) & 5125.86 & 3 & 65.98 \% & 7230 &  0.38 \% & 149 & NaN & - \\ 
	\bf & & & & &  0.38 \% & 44 &  &   \\
	\bf & & & & &  \bf 0.37 \% & 132 &   & \\
	\hline 
	 \bf Case 2 (500, 10) & 5545.85 & 3 & 48.328 \% & 7230 & 0.39 \% & 50 & NaN & - \\ 
	\bf & & & & & \bf  0.38 \% & 30 &  &   \\
	\bf & & & & & \bf 0.38 \% & 231 &  &  \\ 
	\hline
	\bf Case 1 (1000, 10) & 5116.08 & 3 & NaN & - &   0.58 \% & 257 & NaN & -  \\ 
	\bf & & & & &  \bf 0.57 \% & 128 &  &    \\
	\bf & & & & &  \bf 0.57 \% & 1373 &   & \\ 
	\hline
	\bf Case 2 (1000, 10) & 6946.12 & 3 & NaN & - &  0.39 \% & 323 & NaN & - \\ 
	\bf & & & & &  0.36 \% & 129 &  &   \\
	\bf & & & & &  \bf 0.34 \% & 1167 &  &  \\ 
	\hline
\end{tabular}}
\end{center}
\end{table}

\pagebreak

\begin{table}[H]
\caption {Synthetic Example- Cardinality 20} \label{tab:artificial-data-20}
\begin{center}
\resizebox{0.9 \textwidth}{!}{\begin{tabular}{|l|l|l|l|l|l|l|l|l|l|l|l|l|l|l|l|} \hline
	
\multirow{2}*{\bf Case }& \multirow{2}*{\bf LB-$\ell_0$} & \multirow{2}*{\bf \#-$\lambda$} & \multicolumn{2}{|l|}{\bf Convex-IP-$\ell_0$} & \multicolumn{2}{|l|}{\bf Pert-Convex-$\text{IP}_0$} & \multicolumn{2}{|l|}{\bf SDP} \\ \cline{4 - 9} 

& & & gap & Time & gap & Time & gap & Time \\ \hline

\bf Case 1 (200, 20) & 11222.152 & 2 & 0.779 \% & 7205 &  \bf 0.041 \% & 2391& -\% & -\\ 
\bf  &  &  & &  &   0.042 \% & 2178 &  &   \\ 
\bf  &  &  & &  &   0.466 \% & 3707&  &    \\
\hline

\bf Case 2 (200, 20) & 14588.507 & 2 & 0.503 \% & 7205 & \bf 0.032 \% & 1285 & -\% & -\\ 
\bf  &  &  & &  &   0.036 \% & 2772 &  &     \\ 
\bf  &  &  & &  &   0.479 \% & 7212 &  &  \\
\hline

\bf Case 1 (300, 20) & 8282.32 & 3 & 13.336 \% & 7212 &  \bf 0.089 \% & 2745 &- \% & - \\ 
\bf  &  &  &  &  &   0.159 \% & 1386 &  &     \\ 
\bf  &  &  &  &  &   1.523 \% & 7227&  &   \\
\hline

\bf Case 2 (300, 20) & 10233.583 & 3 & 4.182 \% & 7210 &   0.078 \% & 1835&-\% & -\\ 
\bf  &  &  &  & & \bf  0.07 \% & 99 &  &    \\ 
\bf  &  &  &  & &   0.817 \% & 7229&  &   \\
\hline

\bf Case 1 (400, 20) & 12976.349 & 3 & 55.172 \% & 7219 & \bf 0.08 \% & 2563& -\% &- \\ 
\bf  &  &  &  &  &  0.105 \% & 5278 &  &     \\ 
\bf  &  &  &  &  &  4.288 \% & 7248&  &  \\
\hline

\bf Case 2 (400, 20) & 11809.325 & 2 & 45.209 \% & 7219 & 0.082 \% & 4257 & -\% & -\\ 
\bf  &  &  &  &  &  0.084 \% & 6934&  &    \\ 
\bf  &  &  &  &  & \bf 0.08 \% & 485 &  &  \\
\hline

\bf Case 1 (500, 20) & 10218.591 & 3 & 65.637 \% & 7231 & \bf 0.13 \% & 3882& -\% & -\\ 
\bf  &  &  &  &  &  0.142 \% & 6568&  &    \\ 
\bf  &  &  &  &  &  2.067 \% & 7288&  &    \\
\hline

\bf Case 2 (500, 20) & 11032.377 & 3 & 48.034 \% & 7229 & \bf 0.114 \% & 6603 & -\% &-  \\ 
\bf  &  &  &  &  &  0.138 \% & 2753&  &    \\ 
\bf  &  &  &  &  &  4.88 \% & 7280&  &   \\
\hline

\bf Case 1 (1000, 20) & 10193.919 & 3 & - \% & - &  1.38 \% & 303 & -\% &- \\ 
\bf  &  &  &  &  &  1.358 \% & 1707  &  &   \\ 
\bf  &  &  &  &  & \bf 0.24 \% & 3257   &  &  \\
\hline

\bf Case 2 (1000, 20) & 13867.929 & 3 & - \% & - &  0.691 \% & 318 & -\% &- \\ 
\bf  &  &  &  &  &  0.674 \% & 1927 &  &    \\ 
\bf  &  &  &  &  & \bf 0.18 \% & 8807 &  &   \\
\hline

\end{tabular}}
\end{center}
\end{table}

\pagebreak

\begin{table}[ht]
\caption {Controlling Sparsity - Cardinality 10} \label{tab:controlling-sparsity}
\begin{center}
\resizebox{0.9 \textwidth}{!}{\begin{tabular}{|l|l|l|l|l|l|l|l|l|l|l|l|l|l|l|l|} \hline
	
\multirow{2}*{\bf Case }& \multirow{2}*{\bf LB-$\ell_0$} & \multirow{2}*{\bf \#-$\lambda$} & \multicolumn{2}{|l|}{\bf Convex-IP-$\ell_0$} & \multicolumn{2}{|l|}{\bf Pert-Convex-$\text{IP}_0$} & \multicolumn{2}{|l|}{\bf SDP} \\ \cline{4 - 9} 

& & & gap & Time & gap & Time &  gap & Time \\ \specialrule{.15em}{.05em}{.05em} 

\bf Case 1 (200, 10) & 706 & 1 & \bf 0.14 \% & 925 & 2.9 \% & 117 & 0.42 \% & 1360  \\ 
\bf  &  & &  & &   2.6 \% & 340 &  &     \\ 
\bf  &  & &  & &  2.6 \% & 3663 &  &    \\ 
\hline
\bf Case 2 (200, 10) & 680 & 1 & \bf 0.14 \% & 1195 & 3.53 \% & 176 & 1.2 \% & 1148 \\ 
\bf  &  & &  & &   3.38 \% & 372 &  &    \\ 
\bf  &  & &  & &   3.53 \% & 3672 &  &   \\ 
\hline
\bf Case 1 (300, 10) & 972 & 1 & \bf 1.4 \% & 1958 & 3.91 \% & 135 & NaN & - \\ 
\bf  &  & &  & &   3.81 \% & 453 &  &    \\ 
\bf  &  & &  & &   3.70 \% & 3635 &  &    \\ 
\hline
\bf Case 2 (300, 10) & 976 & 1 & \bf 1.1 \% & 3007 & 3.79 \% & 278 & NaN & - \\ 
\bf  &  & &  & &   3.48 \% & 1558 &  &     \\ 
\bf  &  & &  & &   3.69 \% & 3772 &  &     \\ 
\hline
\bf Case 1 (400, 10) & 1239 & 1 & \bf 1.3 \% & 7207 &  4.21 \% & 769 & NaN & - \\ 
\bf  &  & &  & &  3.96 \% & 699 &   &  \\ 
\bf  &  & &  & &  3.96 \% & 3699 &   & \\ 
\hline
\bf Case 2 (400, 10) & 1207 & 1 & \bf 1.6 \% & 7206 & 3.56 \% & 221 & NaN & - \\ 
\bf  &  & &  & &   3.48\% & 1894 &    &   \\ 
\bf  &  & &  & &   3.40 \% & 3697 &    &  \\ 
\hline
\bf Case 1 (500, 10) & 1498 & 1 & \bf 2.1 \% & 12180 & 5.21 \% & 1026 & NaN & - \\ 
\bf  &  & &  & &   4.74 \% & 2881 &   &  \\ 
\bf  &  & &  & &   4.81 \% & 3661 &   &  \\ 
\hline
\bf Case 2 (500, 10) & 1498 & 1 & \bf 2.1 \% & 13917 & 4.14 \% & 251 & NaN & - \\ 
\bf  &  & &  & &   4.07 \% & 1039 &  &    \\ 
\bf  &  & &  & &   4.01 \% & 3783 &  &    \\ 
\hline
\bf Case 1 (1000, 10) & 3948 & 1 &  - & - &  59.7 \% & 2206 & NaN & - \\ 
\bf  &  & &  &  &  53.3 \% & 8318 &    &  \\ 
\bf  &  & &  &  &  \bf 49.5 \% & 3600 &    &  \\ 
\hline
\bf Case 2 (1000, 10) & 4002 & 1 &  NaN & - & 58.1 \% & 3270 & NaN & - \\ 
\bf  &  & &  &  &  51.0 \% & 8356 &    &  \\ 
\bf  &  & &  &  &  \bf 47.6 \% & 3600 &    &  \\ 
\hline
\end{tabular}}
\end{center}
\end{table}

\pagebreak

\begin{table}[H]
\caption {Controlling Sparsity - Cardinality 20} \label{tab:controlling-sparsity-20}
\begin{center}
\resizebox{0.9 \textwidth}{!}{\begin{tabular}{|l|l|l|l|l|l|l|l|l|l|l|l|l|l|l|l|} \hline
	
\multirow{2}*{\bf Case }& \multirow{2}*{\bf LB-$\ell_0$} & \multirow{2}*{\bf \#-$\lambda$} & \multicolumn{2}{|l|}{\bf Convex-IP-$\ell_0$} & \multicolumn{2}{|l|}{\bf Pert-Convex-$\text{IP}_0$} & \multicolumn{2}{|l|}{\bf SDP} \\ \cline{4 - 9} 

& &  & gap & Time & gap & Time & gap & Time \\ \hline

\bf Case 1 (200, 20) & 1341.432 & 1 & 0.97 \% & 277 &  0.01 \% & 1434 & -\% &- \\ 
\bf  &  &  & &  &  \bf 0.009 \% & 4726&  &    \\ 
\bf  &  &  & &  &   0.735 \% & 2554&  &   \\
\hline

\bf Case 2 (200, 20) & 1287.45 & 1 & 1.63 \% & 332 &  0.009 \% & 887 & -\% &- \\ 
\bf  &  &  &  & & \bf 0.008 \% & 2847 &  &    \\ 
\bf  &  &  &  & &  1.22 \% & 1971 &  &     \\
\hline

\bf Case 1 (300, 20) & 1839.578 & 1 & 1.25 \% & 1019 & \bf 0.551 \% & 1932& -\% & -\\ 
\bf  &  &  &  & &   0.636 \% & 4854&  &     \\ 
\bf  &  &  &  & &   7.027 \% & 7280&  &    \\
\hline

\bf Case 2 (300, 20) & 1849.485 & 1 & 0.192 \% & 2217 & \bf 0.19 \% & 897 & -\% & -\\ 
\bf  &  &  &  & &   0.796 \% & 7229&  &    \\ 
\bf  &  &  &  & &   4.287 \% & 7226&  &    \\
\hline

\bf Case 1 (400, 20) & 2339.441 & 1 & \bf 1.45 \% & 907 &  2.140 \% & 4343& -\% & -\\ 
\bf  &  &  &  & &   5.47 \% & 7265&  &    \\ 
\bf  &  &  &  & &   9.847 \% & 7248&  &   \\
\hline

\bf Case 2 (400, 20) & 2273.785 & 1 & \bf 2.34 \% & 3106 &  3.572 \% & 3059& -\% &- \\ 
\bf  &  &  &  & &   5.864 \% & 5164 &  &     \\ 
\bf  &  &  &  & &  10.537 \% & 7249 &  &   \\
\hline

\bf Case 1 (500, 20) & 2870.013 & 1 & \bf 2.34 \% & 2773 &  3.376 \% & 6013& -\% & - \\ 
\bf  &  &  &  & &   4.077 \% & 10870&  &     \\ 
\bf  &  &  &  & &   5.572 \% & 7285&  &   \\
\hline

\bf Case 2 (500, 20) & 2832.149 & 1 & \bf 2.37 \% & 3015 &  3.539 \% & 5011& -\% & -\\ 
\bf  &  &  &  & &  5.087 \% & 7293&  &    \\ 
\bf  &  &  &  & &  5.063 \% & 7283&  &   \\
\hline

\bf Case 1 (1000, 20) & 7535.996 & 1 & -\% &  - &  31.656 \% & 7851&-\% & -\\ 
\bf  &  &  &  & &  27.151 \% & 721 &  &    \\ 
\bf  &  &  &  & &  \bf 25.326 \% & 7518&  &    \\
\hline

\bf Case 2 (1000, 20) & 7759.88 & 1 & - \% & - &  29.393 \% & 311 & -\% & -\\ 
\bf  &  &  &  & &   25.230 \% & 809&  &   \\ 
\bf  &  &  &  & &  \bf  23.433 \% & 7510&  &  \\
\hline

\end{tabular}}
\end{center}
\end{table}

\begin{table}[ht]
\caption {First six sparse principal components of Pitprops} \label{tab:pitprops}
\begin{center}
\resizebox{0.8 \textwidth}{!}{
\begin{tabular}{|l|l|l|l|l|l|l|l|l|l|l|l|} \hline
	\multirow{2}*{\bf Cardinality }& 
	\multirow{2}*{\bf LB-$\ell_0$} & \multicolumn{2}{|l|}{\bf Convex-IP-$\ell_0$} & \multicolumn{2}{|l|}{\bf Pert-Convex-IP} & 
	\multicolumn{2}{|l|}{\bf SDP} \\ 
	
	& & gap & Time & gap & Time & gap & Time  \\ \specialrule{.15em}{.05em}{.05em} 
	
	 \bf Cardinality 5 & 3.406 & 3.2 \% & 0.40 & 6.0 \% & 0.34 & \bf 1.5 \% & 3.70 \\ \hline 
	 \bf Cardinality 2 & 1.882 & 1.4 \% & 0.23 & 3.6 \% & 0.34 & \bf 0 \% & 2.49\\ \hline
	 \bf Cardinality 2 & 1.364 & 3.8 \% & 0.30 & 7.6 \% & 0.85 & \bf 1.0 \% & 2.69 \\ \hline
	 \bf Cardinality 1 & 1 & 1.8 \% & 0.75 & 3.5 \% & 1.02 & \bf 0 \% & 2.40  \\ \hline
	 \bf Cardinality 1 & 1 & 2.2 \% & 0.30 & 3.6 \% & 0.61 & \bf 0 \% & 2.42\\ \hline
	 \bf Cardinality 1 & 1 & 1.2 \% & 0.30 & 2.1 \% & 0.51 & \bf 0 \% & 2.32 \\ \hline
	 \bf Sum of above & 9.652 & 2.5 \% & 2.28 & 4.8 \% & 3.67 & \bf 0.7 \% & 16.02  \\ \hline
\end{tabular}}
\end{center}
\end{table}

\begin{table}[ht]
\caption {Biological and Internet Data - Cardinality 10} \label{tab:bio-internet-data}
\begin{center}
\resizebox{0.9 \textwidth}{!}{\begin{tabular}{|l|l|l|l|l|l|l|l|l|l|l|l|l|l|l|l|} \hline
	
\multirow{2}*{\bf Case }& \multirow{2}*{\bf LB-$\ell_0$} & \multirow{2}*{\bf \#-$\lambda$}  & \multicolumn{2}{|l|}{\bf Convex-IP-$\ell_0$} & \multicolumn{2}{|l|}{\bf Pert-Convex-$\text{IP}_0$} & \multicolumn{2}{|l|}{\bf SDP} \\ \cline{4 - 9} 

& & & gap & Time & gap & Time & gap & Time \\ \specialrule{.15em}{.05em}{.05em}

\bf Eisen-1 (79, 10) & 17.33 & 1 & 0.3 \% & 4.6 &  \bf 0.12 \% & 63 & 2.2 \% & 15 \\ 
\bf  &  &  &  & &   0.17 \% & 113 &  &  \\
\bf  &  &  &  & &  0.4 \% & 412 &  &   \\
\hline
\bf Eisen-2 (118, 10)& 11.71 & 1 & \bf 1.4 \% & 96 &  4.10 \% & 69 & 2.0 \% & 52 \\ 
\bf  &  &  &  & &    2.13 \% & 139 &  &   \\
\bf  &  &  &  & &   1.70 \% & 385 &  &   \\
\hline
\bf Colon (500, 10) & 2641 & 1 & 14.7 \% & 9000 & 27.7 \% & 708 & NaN & - \\ 
\bf  &  &  &  &  &  9.58 \% & 1181 &  &  \\
\bf  &  &  &  &  &  \bf 6.89 \% & 353 &  &    \\
\hline
\bf Lymphoma (500, 10) & 6008 & 3 & 20.7 \% & 3723 &  41 \% & 610 & NaN & -\\ 
\bf  &  &  &  & &   21 \% & 1526 &  &    \\
\bf  &  &  &  & &   \bf 17 \% & 2808 &  &    \\
\hline
\bf Reddit (2000, 10) & 1052 & 1 &  NaN & - &  3.59 \%  & 5663 & NaN & - \\
\bf  &  &  &  & &    \bf 2.142 \% & 8584 &  &    \\
\bf  &  &  &  & &    3.615 \% & 4318 &  &    \\
\hline
\end{tabular}}
\end{center}
\end{table}

\begin{table}[ht]
\caption {Biological and Internet Data - Cardinality 20} \label{tab:bio-internet-data-20}
\begin{center}
\resizebox{0.9  \textwidth}{!}{\begin{tabular}{|l|l|l|l|l|l|l|l|l|l|l|l|l|l|l|l|} \hline
	
\multirow{2}*{\bf Case }& \multirow{2}*{\bf LB-$\ell_0$} & \multirow{2}*{\bf \#-$\lambda$} & \multicolumn{2}{|l|}{\bf Convex-IP-$\ell_0$} & \multicolumn{2}{|l|}{\bf Pert-Convex-$\text{IP}_0$} & \multicolumn{2}{|l|}{\bf SDP}  \\ \cline{4 - 9} 

& & & gap & Time &gap & Time & gap & Time  \\ \specialrule{.15em}{.05em}{.05em} 

\bf Eisen-1 (79, 20) & 17.719 & 1 & 1.30 \% & 742 &  \bf 0.062 \% & 450& 2.37\% & 13 \\ 
\bf  &  &  &  & &   0.102 \% & 7928&  &   \\
\bf  &  &  &  & &   0.333 \% & 7205&  &   \\
\hline

\bf Eisen-2 (118, 20) & 19.323 & 1 & 2.02 \% & 64 &  1.309  \% & 283& 2.28\% & 53\\ 
\bf  &  &  &  &  & \bf 0.502 \% & 904&  &  \\
\bf  &  &  &  &  &  1.294 \% & 7206&  &   \\
\hline

\bf Colon (500, 20) & 4255.694 & 1 & 15.3 \% & 7230 &  16.537 \% & 4510& - \% & - \\ 
\bf  &  &  & &  &  \bf 5.77 \% & 2931&   & \\
\bf  &  &  & &  &   5.89 \% & 7286 &   & \\
\hline

\bf Lymphoma (500, 20) & 9082.158 & 2 & 18.7 \% & 7239 &  22.569 \% & 1677& - \% & - \\ 
\bf  &  &  &  & &   12.3 \% & 1442&  &   \\
\bf  &  &  &  & &  \bf 11.81 \% & 3721&  &   \\
\hline

\bf Reddit (2000, 20) & 1119.046 & 1 & - \% & - &  \bf 4.256 \% & 7920& - \% & -\\ 
\bf  &  &  &  & &   4.288 \% & 1677&  &   \\
\bf  &  &  &  & &   4.776 \% & 4274&  &   \\
\hline

\end{tabular}}
\end{center}
\end{table}

\section{Acknowledgements}
We would like to thank Munmun De Choudhury for providing us with the internet data set. We would like to thank the anonymous reviewers for their constructive comments that significantly improved the presentation of this paper. Rahul Mazumder acknowledges research support from ONR-N000141812298, NSF-IIS-1718258.

\bibliographystyle{plain}
\bibliography{Ref,rahul_dbm3} 

\begin{thebibliography}{10}

\bibitem{allen2011sparse}
Genevera~I Allen and Mirjana Maleti{\'c}-Savati{\'c}.
\newblock Sparse non-negative generalized {PCA} with applications to
  metabolomics.
\newblock {\em Bioinformatics}, 27(21):3029--3035, 2011.

\bibitem{Bagroy:2017:SMB:3025453.3025909}
Shrey Bagroy, Ponnurangam Kumaraguru, and Munmun De~Choudhury.
\newblock A social media based index of mental well-being in college campuses.
\newblock In {\em Proceedings of the 2017 CHI Conference on Human Factors in
  Computing Systems}, CHI '17, pages 1634--1646, New York, NY, USA, 2017. ACM.

\bibitem{bertsimas_berk_2016}
Lauren Berk and Dimitris Bertsimas.
\newblock Certifiably optimal sparse principal component analysis.
\newblock {\em technical report}, 2016.

\bibitem{berthet2013optimal}
Quentin Berthet and Philippe Rigollet.
\newblock Optimal detection of sparse principal components in high dimension.
\newblock {\em The Annals of Statistics}, 41(4):1780--1815, 2013.

\bibitem{bienstock1996computational}
Daniel Bienstock.
\newblock Computational study of a family of mixed-integer quadratic
  programming problems.
\newblock {\em Mathematical Programming}, 74(2):121--140, 1996.

\bibitem{bomze2013copositivity}
Immanuel~M Bomze and Gabriele Eichfelder.
\newblock Copositivity detection by difference-of-convex decomposition and
  $\omega$-subdivision.
\newblock {\em Mathematical Programming}, 138(1-2):365--400, 2013.

\bibitem{bonami2016solving}
Pierre Bonami, Oktay G{\"u}nl{\"u}k, and Jeff Linderoth.
\newblock Solving box-constrained nonconvex quadratic programs.
\newblock {\em Optimization online}, pages 26--76, 2016.

\bibitem{burer2005local}
Samuel Burer and Renato~DC Monteiro.
\newblock Local minima and convergence in low-rank semidefinite programming.
\newblock {\em Mathematical Programming}, 103(3):427--444, 2005.

\bibitem{burer2009old}
Samuel Burer and Anureet Saxena.
\newblock Old wine in a new bottle: The {MILP} road to {MIQCP}.
\newblock {\em Optimization Online}, 2009.

\bibitem{burer2009globally}
Samuel Burer and Dieter Vandenbussche.
\newblock Globally solving box-constrained nonconvex quadratic programs with
  semidefinite-based finite branch-and-bound.
\newblock {\em Computational Optimization and Applications}, 43(2):181--195,
  2009.

\bibitem{cadima1995loading}
Jorge Cadima and Ian~T Jolliffe.
\newblock Loading and correlations in the interpretation of principle
  compenents.
\newblock {\em Journal of Applied Statistics}, 22(2):203--214, 1995.

\bibitem{chan2015worst}
Siu~On Chan, Dimitris Papailiopoulos, and Aviad Rubinstein.
\newblock On the worst-case approximability of sparse {PCA}.
\newblock {\em arXiv preprint arXiv:1507.05950}, 2015.

\bibitem{jordan_07}
A.~d'Aspremont, L.~El. Ghaoui, M.~I. Jordan, and G.~R.~G. Lanckriet.
\newblock A direct formulation for sparse {PCA} using semidefinite programming.
\newblock {\em SIAM Review}, 49:434--448, 2007.

\bibitem{d2007full}
Alexandre d'Aspremont, Francis~R Bach, and Laurent~El Ghaoui.
\newblock Full regularization path for sparse principal component analysis.
\newblock In {\em Proceedings of the 24th international conference on Machine
  learning}, pages 177--184. ACM, 2007.

\bibitem{d2005direct}
Alexandre d'Aspremont, Laurent~E Ghaoui, Michael~I Jordan, and Gert~R
  Lanckriet.
\newblock A direct formulation for sparse {PCA} using semidefinite programming.
\newblock In {\em Advances in neural information processing systems}, pages
  41--48, 2005.

\bibitem{de2018using}
Marianna De~Santis, Franz Rendl, and Angelika Wiegele.
\newblock Using a factored dual in augmented lagrangian methods for
  semidefinite programming.
\newblock {\em Operations Research Letters}, 46(5):523--528, 2018.

\bibitem{d2014approximation}
Alexandre d’Aspremont, Francis Bach, and Laurent El~Ghaoui.
\newblock Approximation bounds for sparse principal component analysis.
\newblock {\em Mathematical Programming}, 148(1-2):89--110, 2014.

\bibitem{d2008optimal}
Alexandre d’Aspremont, Francis Bach, and Laurent~El Ghaoui.
\newblock Optimal solutions for sparse principal component analysis.
\newblock {\em Journal of Machine Learning Research}, 9(Jul):1269--1294, 2008.

\bibitem{frangioni2007sdp}
Antonio Frangioni and Claudio Gentile.
\newblock {SDP} diagonalizations and perspective cuts for a class of
  nonseparable miqp.
\newblock {\em Operations Research Letters}, 35(2):181--185, 2007.

\bibitem{hastie2015statistical}
Trevor Hastie, Robert Tibshirani, and Martin Wainwright.
\newblock {\em Statistical learning with sparsity}.
\newblock CRC press, 2015.

\bibitem{he2011algorithm}
Yunlong He, Renato~DC Monteiro, and Haesun Park.
\newblock An algorithm for sparse {PCA} based on a new sparsity control
  criterion.
\newblock In {\em Proceedings of the 2011 SIAM International Conference on Data
  Mining}, pages 771--782. SIAM, 2011.

\bibitem{jeffers1967two}
JNR Jeffers.
\newblock Two case studies in the application of principal component analysis.
\newblock {\em Applied Statistics}, pages 225--236, 1967.

\bibitem{jolliffe2003modified}
Ian~T Jolliffe, Nickolay~T Trendafilov, and Mudassir Uddin.
\newblock A modified principal component technique based on the lasso.
\newblock {\em Journal of computational and Graphical Statistics},
  12(3):531--547, 2003.

\bibitem{journee2010generalized}
Michel Journ{\'e}e, Yurii Nesterov, Peter Richt{\'a}rik, and Rodolphe
  Sepulchre.
\newblock Generalized power method for sparse principal component analysis.
\newblock {\em Journal of Machine Learning Research}, 11(Feb):517--553, 2010.

\bibitem{phdthesisKim}
Jinhak Kim.
\newblock {\em Cardinality Constrained Optimization Problems}.
\newblock PhD thesis, Purdue University, West Lafayette, Indiana, 8 2016.

\bibitem{Ma2013}
Shiqian Ma.
\newblock Alternating direction method of multipliers for sparse principal
  component analysis.
\newblock {\em Journal of the Operations Research Society of China},
  1(2):253--274, Jun 2013.

\bibitem{magdon2017np}
Malik Magdon-Ismail.
\newblock {NP}-hardness and inapproximability of sparse {PCA}.
\newblock {\em Information Processing Letters}, 126:35--38, 2017.

\bibitem{mazumder2017discrete}
Rahul Mazumder and Peter Radchenko.
\newblock The discrete dantzig selector: Estimating sparse linear models via
  mixed integer linear optimization.
\newblock {\em IEEE Transactions on Information Theory}, 63(5):3053--3075,
  2017.

\bibitem{nemhauser1988integer}
George~L Nemhauser and Laurence~A Wolsey.
\newblock {\em Integer and Combinatorial Optimization. Interscience Series in
  Discrete Mathematics and Optimization}.
\newblock 1988.

\bibitem{papailiopoulos2013sparse}
Dimitris Papailiopoulos, Alexandros Dimakis, and Stavros Korokythakis.
\newblock Sparse {PCA} through low-rank approximations.
\newblock In {\em International Conference on Machine Learning}, pages
  747--755, 2013.

\bibitem{pennebaker2001linguistic}
James~W Pennebaker, Martha~E Francis, and Roger~J Booth.
\newblock Linguistic inquiry and word count: {LIWC} 2001.
\newblock {\em Mahway: Lawrence Erlbaum Associates}, 71(2001):2001, 2001.

\bibitem{Saha:2017:MSS:3171581.3134727}
Koustuv Saha and Munmun De~Choudhury.
\newblock Modeling stress with social media around incidents of gun violence on
  college campuses.
\newblock {\em Proc. ACM Hum.-Comput. Interact.}, 1(CSCW):92:1--92:27, December
  2017.

\bibitem{tausczik2010psychological}
Yla~R Tausczik and James~W Pennebaker.
\newblock The psychological meaning of words: {LIWC} and computerized text
  analysis methods.
\newblock {\em Journal of language and social psychology}, 29(1):24--54, 2010.

\bibitem{vershynin2016high}
Roman Vershynin.
\newblock {\em High-Dimensional Probability An Introduction with Applications
  in Data Science}.
\newblock Draft, 2016.

\bibitem{wang2016statistical}
Tengyao Wang, Quentin Berthet, and Richard~J Samworth.
\newblock Statistical and computational trade-offs in estimation of sparse
  principal components.
\newblock {\em The Annals of Statistics}, 44(5):1896--1930, 2016.

\bibitem{wht_09}
DM. Witten, R.~Tibshirani, and T.~Hastie.
\newblock A penalized matrix decomposition, with applications to sparse
  principal components and canonical correlation analysis.
\newblock {\em Biostatistics}, 10(3):515--534, 2009.

\bibitem{yuan2011truncated}
Xiao-Tong Yuan and Tong Zhang.
\newblock Truncated power method for sparse eigenvalue problems.
\newblock {\em arXiv preprint arXiv:1112.2679}, 2011.

\bibitem{yuan2013truncated}
Xiao-Tong Yuan and Tong Zhang.
\newblock Truncated power method for sparse eigenvalue problems.
\newblock {\em Journal of Machine Learning Research}, 14(Apr):899--925, 2013.

\bibitem{zhang2012sparse}
Youwei Zhang, Alexandre d’Aspremont, and Laurent El~Ghaoui.
\newblock Sparse {PCA}: Convex relaxations, algorithms and applications.
\newblock In {\em Handbook on Semidefinite, Conic and Polynomial Optimization},
  pages 915--940. Springer, 2012.

\bibitem{zhang2002low}
Zhenyue Zhang, Hongyuan Zha, and Horst Simon.
\newblock Low-rank approximations with sparse factors {I}: Basic algorithms and
  error analysis.
\newblock {\em SIAM Journal on Matrix Analysis and Applications},
  23(3):706--727, 2002.

\bibitem{zou2006sparse}
Hui Zou, Trevor Hastie, and Robert Tibshirani.
\newblock Sparse principal component analysis.
\newblock {\em Journal of computational and graphical statistics},
  15(2):265--286, 2006.

\end{thebibliography}

\newpage
\begin{APPENDICES}
\section{Notation} \label{sec:notation}

\begin{table}[!h]
\caption{Notation} 
\label{tab:notation}
\begin{center}
\begin{tabular}{|c|c|} 
\toprule
Notation & Definition\\ 
\midrule
$Y$ & data matrix of size $Y \in \mathbb{R}^{m \times n}$ \\
$A$ & sample covariance matrix $A = \frac{1}{m} Y^{\top} Y$ \\
$\|\cdot\|_0, \|\cdot\|_1, \|\cdot\|_2$ & $\ell_0, \ell_1, \ell_2$ norm \\
$k$ & sparsity parameter of sparse PCA problem \\
$\lambda^k(A)$ & optimal value of $\max_{\|x\|_0 \leq k, \|x\|_2 \leq 1} x^{\top} A x$ \\
$\text{conv}(S)$ & convex hull of set $S$ \\ 
$[n]$ & short notation of index set $\{1, \ldots, n\}$ \\
$\text{diag}(v)$ & diagonal matrix generated from a given vector $v$ \\
$\text{tr}(A)$ & trace of a matrix $A$ \\
$\text{OPT}_{\ell_1}$ & optimal value of $\max_{\|x\|_1 \leq \sqrt{k}, \|x\|_2 \leq 1} x^{\top} A x$ \\
$\rho$ & multiplicative approximation ratio between sparse PCA and its $\ell_1$ relaxation \\
$\{\lambda_i, v_i\}_{i = 1}^n$ & eigenpair of covariance matrix $A$ \\
$\{g_i\}_{i = 1}^n$ & continuous variable $g_i := x^{\top} v_i$ \\
$\{\theta_i\}_{i = 1}^n$ & upper bound of $g_i$ defined as $\theta_i = \max \{x^{\top} v_i: \|x\|_2 \leq 1, \|x\|_0 \leq k\}$  \\
$\{\gamma_i^{j}\}_{j = -N}^N$ & splitting points of interval $[- \theta_i, \theta_i]$ for each $i$ \\ 
$\{\xi_i\}_{i = 1}^n$ & piecewise linear upper approximation of $g_i^2$ \\
$s$ & upper bound of $\sum_{i \in \{i: \lambda_i < \lambda\}} - (\lambda_i - \lambda)(x^{\top} v_i)^2$ \\ 
$2N + 1$ & number of splitting points for interval $[- \theta_i, \theta_i]$ for each $i \in \{i: \lambda_i > \lambda\}$ \\
$\bar{\lambda}$ & $\bar{\lambda} := \max\{\lambda_i: \lambda_i \leq \lambda\}$ \\
$\{\lambda_{i_j}\}_{j = 1}^p$ & $\lambda_{i_1} \geq \cdots \geq \lambda_{i_p} \geq 0$ distinct values of eigenvalues of $A$ \\ 
$\Delta \lambda$ & eigenvalue gap $\Delta \lambda = \min \{\lambda_{i_j} - \lambda_{i_{j + 1}}\}$ for $j = 1, \ldots, p -1$\\
$\bar{A}$ & perturbed covariance matrix of $A$ \\
$(\bar{x}, \bar{y}, \bar{g}, \bar{\xi}, \bar{\eta}, \bar{s})$ & optimal solution for convex-IP \\
$\text{OPT}_{\text{convex-IP}}$ & optimal value of convex integer programming model \\
$\text{OPT}_{\text{pert-convex-IP}}$ & optimal value of perturbed convex integer programming model \\
$b_{(v)}$ & parameter used for cutting planes defined in Section 2.3.3\\
$S_k$ & feasible region of sparse PCA with sparsity parameter $k$ \\
$T_k$ & $\ell_1$ relaxation of sparse PCA with sparsity parameter $k$ \\
$I_{\text{pos}}$ & the size of set $\{i: \lambda_i > \lambda\}$ \\
$I_{\text{pos}}^{\text{ini}}$ & initial input of $\{i: \lambda_i > \lambda\}$ \\ 
$\text{iter}$ & number of iterations used for perturbed convex IP method \\
\bottomrule
\end{tabular}
\end{center}
\end{table}

\section{SDP relaxation}\label{sec:appsdp}
The \ref{eq:SPCA} problem $\max_{\|x\|_2 = 1, \|x\|_0 \leq k} x^{\top} A x$  is equivalent to a nonconvex problem:
\begin{align*}
	\max ~ & \text{tr}(AX) \notag\\
	\text{s.t. } & \text{tr}(X) = 1, \|X\|_0 \leq k^2, X \succeq 0, \text{rank}(X) = 1. 
\end{align*}
Further relaxing this by replacing its rank and cardinality constraints with $\mathbf{1}^{\top} |X| \mathbf{1} \leq k$ gives the standard SDP relaxation:
\begin{align}
	\max ~ & \text{tr}(AX) \notag\\
	\text{s.t. } & \text{tr}(X) = 1, \mathbf{1}^{\top} |X| \mathbf{1} \leq k, X \succeq 0. \tag{SDP} \label{eq:SDP}
\end{align}

\section{  Proof of Proposition \ref{prop:SPCA-bound}}\label{sec:app2}
\begin{proof}
	\textbf{Proof of Proposition \ref{prop:SPCA-bound}:}  Let $x^{\ast} = (x_i^{\ast})_{i = 1}^n$ be an optimal solution of \ref{eq:SPCA}. Then set 
	\begin{align*}
		& \left\{
		\begin{array}{llll}
			g_i^{\ast} & \gets & (x^{\ast})^{\top} v_i, & i \in [n],\\
			\left( (\eta_i^{- N})^{\ast}, \ldots, (\eta_i^{N})^{\ast} \right) & \gets & \left( \eta_i^{- N}, \ldots, \eta_i^{N} \right) \in \text{SOS-2 and } \sum_{j = -N}^{N} \gamma_i^j (\eta_i^j)^{\ast} = g_i^{\ast}, & i \in \{i: \lambda_i > \lambda\},\\ 
			\xi_i^{\ast} & \gets & \sum_{j = - N}^N (\gamma_i^j)^2 \eta_i^j, & i \in \{i: \lambda_i > \lambda\}, \\
			y_i^{\ast} & \gets & |x_i^{\ast}|, & i \in [n], \\
			s_i^{\ast} & \gets & \sum_{i \in \{i: \lambda_i \leq \lambda \}} - (\lambda_i - \lambda) g_i^{\ast}. & 
 		\end{array}
		\right.
	\end{align*}
	Note that the above solution $(x^{\ast}, y^{\ast}, g^{\ast}, \xi^{\ast}, \eta^{\ast}, s^{\ast})$ is a feasible solution for \ref{eq:convex-IP}.  This is easy to verify for all the constraints except the constraint
$\sum_{i\in \{i:\lambda_i > \lambda\}} \xi_i + \sum_{i\in \{i:\lambda_i \leq \lambda\}}g_i^2 \leq 1 + \frac{1}{4N^2}\sum_{i\in \{i:\lambda_i > \lambda\}}\theta_i^2.$ Note that to verify this constraint, it is sufficient  to verify that $\xi_i \leq g_i^2 + \frac{1}{4N^2}\theta_i^2$ for $i\in \{i:\lambda_i > \lambda\}$. This is easily verified based on the size of the discretization and the structure of SOS-2 constraints. 

Moreover, the objective value of feasible solution $(x^{\ast}, y^{\ast}, g^{\ast}, \xi^{\ast}, \eta^{\ast}, s^{\ast})$ is 
	\begin{align*}
		\lambda + \sum_{i \in \{i: \lambda_i > \lambda\}} (\lambda_i - \lambda) \xi_i^{\ast} - s^{\ast}
		\geq & \lambda + \sum_{i \in \{i: \lambda_i > \lambda\}} (\lambda_i - \lambda) ( g_i^{\ast} )^2 - s^{\ast} \\
		= & \lambda + \sum_{i \in \{i: \lambda_i > \lambda\}} (\lambda_i - \lambda) ((x^{\ast})^{\top} v_i)^2 + \sum_{i \in \{i: \lambda_i \leq \lambda\}} (\lambda_i - \lambda) ((x^{\ast})^{\top} v_i)^2 \\
		= & \lambda + \sum_{i = 1}^n (\lambda_i - \lambda) ((x^{\ast})^{\top} v_i)^2. 
	\end{align*}
	Note that the optimal solution $x^{\ast}$ of \ref{eq:SPCA} has property $\| x^{\ast} \|_2 = 1$ and $\sum_{i = 1}^n v_i v_i^{\top} = I_n$. Then $\lambda + \sum_{i = 1}^n (\lambda_i - \lambda) ((x^{\ast})^{\top} v_i)^2 = (x^{\ast})^{\top} A x^{\ast} = \lambda^k(A)$. Therefore, $\text{OPT}_{\text{convex-IP}} \geq \lambda^k(A)$.
\end{proof}

\section{ Proof of Proposition \ref{prop:convex-IP-bound}}\label{sec:app3}
\begin{proof}
	\textbf{Proof of Proposition \ref{prop:convex-IP-bound}:} Let $(\bar{x}, \bar{y}, \bar{g}, \bar{\xi}, \bar{\eta}, \bar{s})$ be an optimal solution for \ref{eq:convex-IP}. Its optimal value then satisfies the following:
	\begin{align*}
		\text{OPT}_{\text{convex-IP}} & = \lambda + \sum_{i \in \{i: \lambda_i > \lambda\}} (\lambda_i - \lambda) \bar{\xi}_i - \bar{s} \\
		& = \lambda + \sum_{i \in \{i: \lambda_i > \lambda\}} (\lambda_i - \lambda) \left( \bar{\xi}_i - \bar{g}_i^2 + \bar{g}_i^2 \right) - \bar{s} \\
		& = \lambda + \sum_{i \in \{i: \lambda_i > \lambda\}} (\lambda_i - \lambda)\left( \bar{\xi}_i - \bar{g}_i^2 \right) + \sum_{i \in \{i: \lambda_i > \lambda\}} (\lambda_i - \lambda) \bar{g}_i^2 - \bar{s}.  
	\end{align*}
	Since variable $s$ satisfies $\sum_{i \in \{i: \lambda_i \leq \lambda\}} - (\lambda_i - \lambda) g_i^2 \leq s$, to maximize the objective function, $\bar{s}$ should be equivalent to $\sum_{i \in \{i: \lambda_i \leq \lambda\}} - (\lambda_i - \lambda) \bar{g}_i^2$, then the above formula can be represented as
	\begin{align}
		& \lambda + \sum_{i \in \{i: \lambda_i > \lambda\}} (\lambda_i - \lambda)\left( \bar{\xi}_i - \bar{g}_i^2 \right) + \sum_{i \in \{i: \lambda_i > \lambda\}} (\lambda_i - \lambda) \bar{g}_i^2 - \bar{s} \nonumber\\
		= & \lambda + \sum_{i \in \{i: \lambda_i > \lambda\}} (\lambda_i - \lambda)\left( \bar{\xi}_i - \bar{g}_i^2 \right) + \sum_{i \in \{i: \lambda_i > \lambda\}} (\lambda_i - \lambda) \bar{g}_i^2 + \sum_{i \in \{i: \lambda_i \leq \lambda\}} (\lambda - \lambda) \bar{g}_i^2 \nonumber\\
		= & \sum_{i \in \{i: \lambda_i > \lambda\}} (\lambda_i - \lambda)\left( \bar{\xi}_i - \bar{g}_i^2 \right) + \bigg( \lambda + \sum_{i = 1}^n (\lambda_i - \lambda) \bar{g}_i^2 \bigg). \label{eq:propconvexIPbet}
	\end{align} 
	By previous results, $ \lambda + \sum_{i = 1}^n (\lambda_i - \lambda) \bar{g}_i^2 = \bar{x}^{\top} A \bar{x}$. Note that due to the $\ell_2-$norm constraint $\|x\|_2 \leq 1$ and the $\ell_1-$norm constraint present in \ref{eq:convex-IP} problem, we have $\bar{x} \in T_k = \{ x \in \mathbb{R}^n : \|x\|_2 \leq 1, \|x\|_1 \leq \sqrt{k}\} \subseteq \rho \cdot \text{Conv}\left(S_k\right)$. Therefore $\bar{x}^{\top} A \bar{x}$ is upper bounded by the value $\rho^2 \cdot \lambda^k(A)$. 
	
	{\color{black} To upper bound the first term in (\ref{eq:propconvexIPbet}), since $g_i = \sum_{j = - N}^N \gamma_i^j \eta_i^j, ~ \xi_i = \sum_{j = - N}^N (\gamma_i^j)^2 \eta_i^j$ for $i \in \{i: \lambda_i > \lambda\}$ and the SOS-2 construction enforces that there are at most two \textit{active} continuous SOS-2 variables $\eta_i^j, \eta_i^{j + 1}$ such that $\eta_i^j + \eta_i^{j + 1} = 1$ with $\eta_i^j, \eta_i^{j + 1} \geq 0$ and the other SOS-2 variables are all zeros, then 
	\begin{align*}
		\xi_i - g_i^2 & ~ = \sum_{j = - N}^N (\gamma_i^j)^2 \eta_i^j - \left( \sum_{j = - N}^N \gamma_i^j \eta_i^j \right)^2 & \\
		& ~ = (\gamma_i^j)^2 \eta_i^j + (\gamma_i^{j + 1})^2 \eta_i^{j + 1} - \left( \gamma_i^j \eta_i^j + \gamma_i^{j + 1} \eta_i^{j + 1} \right)^2 & \text{ for $\eta_i^j, \eta_i^{j + 1}$ active} \\
		& ~ = (\gamma_i^{j + 1} - \gamma_i^j)^2 \eta_i^j (1 - \eta_i^j) & \text{ via $\eta_i^j + \eta_i^{j + 1} = 1$} \\
		& ~ \leq \max_{j = -N, \ldots, N - 1} (\gamma_i^{j + 1} - \gamma_i^j)^2 \cdot \frac{1}{4} 
	\end{align*}
	where in all possible partition of $[- \theta_i, \theta_i]$, the evenly partition of $[- \theta_i, \theta_i]$ achieves the minimum value of $\max_{j = -N, \ldots, N - 1} (\gamma_i^{j + 1} - \gamma_i^j)^2 = \frac{\theta_i^2}{N^2}$. Hence (\ref{eq:propconvexIPbet}) can be upper bounded as follows: }
	\begin{align*}
		\text{OPT}_{\text{convex-IP}} & =  \sum_{i \in \{i: \lambda_i > \lambda\}} (\lambda_i - \lambda)\left( \bar{\xi}_i - \bar{g}_i^2 \right) + \bigg( \lambda + \sum_{i = 1}^n (\lambda_i - \lambda) \bar{g}_i^2 \bigg) \\
		& \leq  \frac{1}{4 N^2} \sum_{i \in \{i: \lambda_i > \lambda\}} (\lambda_i - \lambda) \theta_i^2 + \rho^2 \cdot \lambda^k(A).
	\end{align*}
\end{proof}

{\color{black}
\section{Appendix: Proof of Proposition \ref{prop:convex-IP-poly}}
\begin{proof}
	\textbf{Proof of Proposition \ref{prop:convex-IP-poly}:} Given the heuristic lower bound $\lambda$, the number of splitting points $N$, the size of set $I_{\text{pos}} = |\{i: \lambda_i > \lambda\}|$, for each $i \in \{i: \lambda_i > \lambda\}$, there are at most $2N$ possible choices of \textit{active} SOS-2 variables, i.e., 
	\begin{align*}
		\eta_i^j, \eta_i^{j + 1} > 0, \text{ for } j = - N, \ldots, 0, \ldots, N - 1.  
	\end{align*}
	Thus there are at most $(2N)^{|I_{\text{pos}}|}$ choices of \textit{active} SOS-2 variables for a \ref{eq:convex-IP} problem. For a fixed value of \textit{active} SOS-2 variables, the \ref{eq:convex-IP} problem reduces to be a continuous convex optimization problem which can be solved exactly within polynomial time, say $T$. Thus the \ref{eq:convex-IP} can be solved within $(2N)^{|I_{\text{pos}}|} \cdot T$. 
\end{proof}
}

\section{Proof of Proposition \ref{prop:Pert-Convex-IP}}
\begin{proof}
	\textbf{Proof of Proposition \ref{prop:Pert-Convex-IP}:} Based on Proposition \ref{prop:convex-IP-bound}, we have
	\begin{align*}
		\text{OPT}_{\text{Pert-Convex-IP}} \leq \rho^2 \lambda^k(\bar{A}) + \frac{1}{4N^2} \sum_{i \in \{i: \lambda_i > \lambda\}} (\lambda_i - \lambda) \theta_i^2. 
	\end{align*}
Note that $\bar{A} - A = \sum_{i \in \{i: \lambda_i \leq \lambda\}} (\bar{\lambda} - \lambda_i) v_i v_i^{\top}$. Therefore, 
	\begin{align*}
		\rho^2 \lambda^k(\bar{A}) & = \rho^2 \lambda^k\left(A + (\bar{A} - A) \right)  \\
		& \leq ~ \rho^2 \lambda^k(A) + \rho^2 \lambda^k(\bar{A} - A) \\
		&  \leq ~ \rho^2 \lambda^k(A) + \rho^2 (\bar{\lambda} - \lambda_{\min} (A)) .
	\end{align*} 
\end{proof}

\section{Convex-IP Method and Pert-Convex-IP Method}\label{sec:appconIP}
Algorithm \ref{algo:convex-IP-method} presents all the details of the convex IP solved. Algorithm \ref{algo:Pert-Convex-IP-method} presents all the details of the Pert-Convex-IP solved. 
\begin{algorithm}[h!]
\caption{Convex-IP Method} \label{algo:convex-IP-method}
\begin{algorithmic}[1]
\State \emph{Input}: Sample covariance matrix $A$, cardinality constraint $k$, size of set $\{i: \lambda_i > \lambda \}$ we desire, number of one branch splitting points $N$.
\State \emph{Output}: Lower and upper bound of \ref{eq:SPCA} or \ref{eq:ell_1-relax} based on the choice of $\theta_i$. 
\Function{Convex-IP Method}{$A, k, I_{\text{pos}}, N$}\label{function:convex-IP-method}
\State Set lower bound and warm starting point $(\text{LB}, \bar{x}) \gets \Call{Heuristic Method}{A, k, x^0}$.
\State Set parameter $\lambda_{I_{\text{pos} + 1}} \leq \lambda \leq \text{LB}$ if possible, otherwise set $\lambda \gets \text{LB}$.
\State Set splitting points $\gamma_i^j$ as above based on $N$ and the choice of $\theta_i$, see Section \ref{sec:cip} [\ref{sec:cip-point3}] .
\State To warm start, add additional splitting points based on the point $\bar{x}$.
\State Add cutting-plane (\ref{eq:cut}) to the model based on the choice of $\theta_i$. 
\State Run \ref{eq:convex-IP} problem.
\State Set $\text{UB} \gets$ \ref{eq:convex-IP} if running to the optimal, or the current dual bound obtained from \ref{eq:convex-IP}.
\State \Return $\text{LB, UB}$. 
\EndFunction
\end{algorithmic}
\end{algorithm}

\begin{algorithm}[h!]
\caption{Pert-Convex-IP Method} \label{algo:Pert-Convex-IP-method}
\begin{algorithmic}[1]
\State \emph{Input}: Sample covariance matrix $A$, cardinality constraint $k$, size of set $\{i: \lambda_i > \lambda \}$ we desire, number of one branch splitting points $N$, maximum number of iterations $\text{iter}$.
\State \emph{Output}: Lower and upper bound of \ref{eq:SPCA} or \ref{eq:ell_1-relax} based on the choice of $\theta_i$. 
\Function{Pert-Convex-IP Method}{$A, k, I_{\text{pos}}, N, \text{iter}$}\label{function:Pert-Convex-IP-method}
\State Set lower bound and warm starting point $(\text{LB}, \bar{x}) \gets \Call{Heuristic Method}{A, k, x^0}$.
\State Set parameter $\lambda_{I_{\text{pos} + 1}} \leq \lambda \leq \text{LB}$ if possible, otherwise set $\lambda \gets \text{LB}$.
\State Set parameter $\bar{\lambda} \triangleq \max \{\lambda_i: \lambda_i \leq \lambda \} < \lambda$ if possible.  
\State Set splitting points $\gamma_i^j$ as above based on $N$ and the choice of $\theta_i$, see Section \ref{sec:cip} [\ref{sec:cip-point3}].
\State To warm start, add additional splitting points based on the point $\bar{x}$.
\While{current iteration does not exceed the maximum number of iterations $\text{iter}$ or time limit is not up}
\State Run \ref{eq:pert-convex-IP} problem.
\State Set $\text{UB} \gets$ \ref{eq:pert-convex-IP} if running to the optimal, or the current dual bound obtained from \ref{eq:pert-convex-IP}.
\State Set $\hat{x} \gets$ current feasible solution obtained from \ref{eq:pert-convex-IP}.
\State Add additional splitting points based on solution obtained in solving \ref{eq:pert-convex-IP} problem. 
\State Add cutting-plane (\ref{eq:cut}) to the model based on the choice of $\theta_i$. 
\EndWhile
\State \Return $\text{LB, UB}$.
\EndFunction
\end{algorithmic}
\end{algorithm}

\section{Description of Data Sets} \label{sec:data-set}
\subsection{Artificial Data Sets}
We first conduct numerical experiments on three types of artificial data sets, denoted as the spiked covariance recovery from the paper \cite{papailiopoulos2013sparse}, the synthetic example from the paper \cite{zou2006sparse}, and the controlling sparsity case from the paper \cite{d2005direct}.  A description of each of these three types of instances is presented below:
\subsubsection{Spiked covariance recovery}
Consider a covariance matrix $\Sigma$, which has two sparse eigenvectors with dominated eigenvalues and the rest eigenvector are unconstrained with small eigenvalues. Let the first two dominant eigenvectors $v_1, v_2$ of $\Sigma$ be: 
\begin{align}
	& [v_1]_i = \left\{
	\begin{array}{ll}
		\frac{1}{\sqrt{10}} & i = 1, \ldots, 10, \\
		0 & \text{ otherwise }
	\end{array}
	\right., & ~ 
	[v_2]_i = \left\{
	\begin{array}{ll}
		\frac{1}{\sqrt{10}} & i = 11, \ldots, 20, \\
		0 & \text{ otherwise }
	\end{array}
	\right., 
\end{align}
with the eigenvalues corresponding to the first two dominant eigenvectors be $\lambda_1 \gg 1$ and $\lambda_2 \gg 1$, and the remaining eigenvalues be 1. For example, in our numerical experiments, set $\Sigma \gets 399 \cdot v_1 v_1^{\top} + 299 \cdot v_2 v_2^{\top} + I$.

We have four distinct settings under the spiked covariance recovery case. Let $n$ be the number of features, i.e., the size of the sample covariance matrix of our numerical cases. Let $m$ be the number of samples we generated. We set $n = \{200, 300, 400, 500, 1000\}$ and $m = \{ 50 \}$. Therefore, under each setting of $n$, we generate $m$ random samples $x_i \sim N(0, \Sigma)$, and get our sample covariance matrix $\hat{\Sigma} = \frac{1}{50} \sum_{i = 1}^{50} x_i x_i^{\top}$. In Table \ref{tab:toy-example}, for each setting, we repeat the experiment for 2 times (case 1, case 2), and compare the dual bounds obtained from all three methods. 

\subsubsection{Synthetic Example}
Given $n$, let $n_1, n_2, n_3 \in \left\{ \lceil \frac{n}{3}\rceil, \lfloor \frac{n}{3} \rfloor \right\}$ such that $n_1 + n_2 + n_3 = n$. Let $\mathbf{0}_{p \times q}$ be the matrix of all zeros with size $p \times q$. Let $\mathbf{1}_p$ be the vector of all ones with length $p$. 

Then:
\begin{align}
	\Sigma = \begin{pmatrix}
		290 \cdot \mathbf{1}_{n_1} \mathbf{1}_{n_1}^{\top} + I_{n_1} & \mathbf{0}_{n_1 \times n_2} & - 87 \cdot \mathbf{1}_{n_1} \mathbf{1}_{n_3}^{\top} \\
		\mathbf{0}_{n_2 \times n_1} & 300 \cdot \mathbf{1}_{n_2} \mathbf{1}_{n_2}^{\top} + I_{n_2} & 277.5 \cdot \mathbf{1}_{n_2} \mathbf{1}_{n_3}^{\top} \\
		- 87 \cdot \mathbf{1}_{n_3} \mathbf{1}_{n_1}^{\top} & 277.5 \cdot \mathbf{1}_{n_3} \mathbf{1}_{n_2}^{\top} & 582.7875 \cdot \mathbf{1}_{n_3} \mathbf{1}_{n_3}^{\top} + I_{n_3} 
	\end{pmatrix}.
\end{align} 

In our experiments, we set $n = \{200, 300, 400, 500, 1000\}$, and generate $m = 50$ samples such that $x_i \sim N(0, \Sigma)$. Again, the sample empirical covariance matrix is $\hat{\Sigma} = \frac{1}{50} \sum_{i = 1}^{50} x_i x_i^{\top}$. In Table \ref{tab:artificial-data}, for each setting of $n$, we repeat the experiment twice (case 1, case 2), and compare dual bounds obtained from all three methods. 

\subsubsection{Controlling Sparsity}
Like the spiked covariance recovery case, the covariance matrix $\Sigma$ of controlling sparsity case can also be represented as the summation of a term generated by sparse eigenvector with dominated eigenvalue and the remaining part with small eigenvalues. 
Generate a $n \times n$ matrix $U$ with uniformly distributed coefficients in $[0,1]$ which can be seen as white noise. Let $v \in \{0,1\}^n$ be a sparse vector with $\|v\|_0 \leq k$. We then form a test matrix $\Sigma = U^{\top} U + \sigma v v^{\top}$, where $\sigma$ is the signal-to-noise ratio and is set to 15.

In our experiments, we set $n = \{200, 300, 400, 500, 1000\}$ and generate $m = 50$ samples $x_i \sim N(0, \Sigma)$ for $i = 1, \ldots, 50$. Therefore the sample empirical covariance matrix is $\hat{\Sigma} = \frac{1}{50} \sum_{i = 1}^{50} x_i x_i^{\top}$. In Table \ref{tab:controlling-sparsity}, for each setting of $n$, we repeat the experiment twice (case 1, case 2), and compare dual bounds obtained from all three methods.

\subsection{Real Data Sets}
We conduct numerical experiments on three types of real data sets, the benchmark pitprops data from \cite{jeffers1967two}, biological data from  \cite{d2007full,papailiopoulos2013sparse,yuan2013truncated} and large-scale data collected from internet. 

\subsubsection{Pitprops Data}
The PitProps data set in \cite{jeffers1967two} (consisting of 180 observations with 13 measured variables) has been a standard benchmark to evaluate algorithms for sparse PCA.

Based on previous work, we also consider the first six $k-$sparse principal components. Note the $i$-th $k-$sparse principal component $x^i$ is obtained by solving $\arg \max_{\|x\|_2 = 1, \|x\|_0 \leq k} x^{\top} A^i x$ where $A^1 \gets A$ and $A^i \gets (I - x^{i - 1} (x^{i - 1})^{\top}) A^{i - 1} (I - x^{i - 1} (x^{i - 1})^{\top})$ for $i = 2, \ldots, 6$. Table \ref{tab:pitprops} lists the six extracted sparse principal direction with cardinality setting $5-2-2-1-1-1$.

\subsubsection{Biological Data}
In Table \ref{tab:bio-internet-data} we present numerical experiments on four biological data sets. The first two biological data sets (Eisen-1, Eisen-2) are from \cite{yuan2013truncated}. The Colon cancer data set is from Alon et al. (1999). The Lymphoma data set is from Alizadeh et al. (2000). 

\subsubsection{Large-scale Internet Data}

In Table \ref{tab:bio-internet-data} we also present numerical experiments on internet dataset. This dataset is constructed out of textual posts shared on the popular social media Reddit. Based on prior work~\cite{Bagroy:2017:SMB:3025453.3025909, Saha:2017:MSS:3171581.3134727}, the archive of all public Reddit posts shared on Google’s Big Query was utilized to obtain a set of 3292 posts from the subreddit r/stress from December 2010 to January 2017. The r/stress community allows individuals to self-report and disclose their stressful experiences and is a support community. For example, two (paraphrased) post excerpts say: ``Feel like I am burning out (again...) Help: what do I do?"; and ``How do I calm down when I get triggered?". The community is also heavily moderated; hence these 3292 posts were considered to be indicative of actual stress.~\cite{Saha:2017:MSS:3171581.3134727}.

Then on this collected set of posts, standard text-based feature extraction techniques were applied per post, starting with cleaning the data (stopword elimination, removal of noisy words, stemming), and then building a language model with the n-grams in a post ($n$=2). The outcomes of this language model provided us with 1950 features, after including only the top most statistically significant features. Additionally, the psycholinguistic lexicon Linguistic Inquiry and Word Count (LIWC)~\cite{pennebaker2001linguistic} was leveraged  to obtain features aligning with 50 different empirically validated psychological categories, such as positive affect, negative affect, cognition, and function words. These features have been extensively validated in prior work to be indicative of stress and similar psychological constructs~\cite{tausczik2010psychological}. Our final dataset matrix comprised 3092 rows, corresponding to the 3092 posts, and 2000 features in all.

The purpose of testing the sparse PCA technique on this dataset is to identify those features that are theoretically guaranteed to be the most salient in describing the nature of stress expressed in a post. In turn, these salient features could be utilized by a variety of stakeholders like clinical psychologists, and community moderators and managers to gain insights into stress-related phenomenon as well as to direct interventions as appropriate.

The final $A$ matrix can be found on the website: \newline https://www2.isye.gatech.edu/~sdey30/publications.html

{\color{black}
\section{Comparison with Existing Primal Heuristics for Lower Bounds}
\label{sec:LB-comparison}
In this section, we compare our method Algorithm~\ref{algo:heuristic} for obtaining good primal feasible solutions with two standard heuristics methods for sparse PCA in the literature: truncated power method (TPM, \cite{yuan2011truncated}), generalized power method (GPM, \cite{journee2010generalized}) with $\ell_0$-penalty. See Table~\ref{tab:LB} for a comparison on all the real instances. 
\begin{table}[!h]
\caption{Compare with existing primal methods}
\label{tab:LB}
\vskip - 0.15in
\begin{center}
\begin{small}
\begin{sc}
\resizebox{1 \textwidth}{!}{
\begin{tabular}{|l||c|c||c|c||c|c|c|c|}
\toprule

\multirow{2}*{Instance} & \multicolumn{2}{|c||}{SPCA-Primal (Our method)} & \multicolumn{2}{|c||}{TPM} & \multicolumn{2}{|c|}{GPM} \\ \cline{2 - 7}

 & LB & Time & LB & Time & LB & Time  \\ 

\midrule
Pitprops $k = 5$ & \bf 3.406 & 0.1  & \bf 3.406 & 0.0  & \bf 3.406 & 0.1  \\
Eisen-1 $k = 10$ & \bf 17.335 & 0.0  & \bf 17.335 & 0.0  & \bf 17.335 & 2.3  \\
Eisen-2 $k = 10$ & \bf 11.718 & 0.0  & \bf 11.718 & 0.0  & 11.605 & 4.1  \\
CovColon $k = 10$ & \bf 2641.228 & 0.4  & \bf 2641.228 & 0.4  & \bf 2641.228 & 59.7  \\
Lymp $k = 10$ & \bf 5911.412 & 0.3  & \bf 5911.412 & 0.2  & 5753.563 & 81.4 \\
Reddit $k = 10$ & \bf 1052.020 & 7.4  & \bf 1052.020 & 4.5  & \bf 1052.020 & 1881.4  \\

\bottomrule
\end{tabular}
}

\end{sc}
\end{small}
\end{center}
\vskip -0.1in
\end{table}
As we can see, all the methods produce solutions with more or less the same objective function values. 
}

{\color{black}
\section{Comparison with Existing Methods for Dual Bounds}
\label{sec:UB-comparison}
In this section, we compare the performance of our convex integer program method with (1) Mosek, in our experience one of the best commercial implementations of SDP solvers; and (2) two variants of the approach presented in \cite{de2018using}, which uses the main idea of \cite{burer2005local}. The variants are listed as follows:

\begin{enumerate}
	\item \textbf{DADAL:} Directly using code available online from \cite{de2018using}: Dual Alternating Direction Augmented Lagrangian (DADAL) method can be used to find out the upper bounds of the SDP problem. In order to use the freely available implementation, the DADAL method requires the remodeling of the original problem into the following standard format:
		\begin{align*}
			\min \langle \bm{A}, \bm{X} \rangle & ~ \text{s.t} ~ \mathcal{A}(\bm{X}) = \bm{b}, ~ \bm{X} \succeq \bm{0}.
		\end{align*}

		Thus to find the dual bounds of the sparse PCA with covariance matrix of size $d$, we need to (1) add additional auxiliary variables for inequality constraints, (2) reformulate the variables into a p.s.d. matrix. For the step-(1), the original sparse PCA problem can be formulated in the following fashion: 
\begin{align*}
	\min & ~ \langle - \bm{A}, \bm{X} \rangle & \tag{SDP-equality} \label{eq:SDP-format} \\
	\text{s.t.} & ~ \langle \bm{I}_d, \bm{X} \rangle + \mu_1 = 1 \\
	& ~ \langle \bm{I}_{d^2}, \text{diag}(\bm{Y}) \rangle + \mu_2 = k \\
	& ~ \langle \bm{E}_{ij}^+, \bm{X} \oplus \text{diag}(\bm{Y}) \rangle + \gamma_{ij}^+ = 0, ~ \forall ~ ij \\
	& ~ \langle \bm{E}_{ij}^-, \bm{X} \oplus \text{diag}(\bm{Y}) \rangle + \gamma_{ij}^- = 0 , ~ \forall ~ ij \\
	& ~ \bm{X}, \text{diag}(\bm{Y}), \text{diag}(\bm{\gamma}^+), \text{diag}(\bm{\gamma}^-), \text{diag}(\mu) \succeq \bm{0}	
\end{align*}
where $\oplus$ is the direct sum of two matrices, i.e., $\bm{A} \oplus \bm{B} := \begin{pmatrix} \bm{A} & 0 \\ 0 & \bm{B} \\ \end{pmatrix}$, the matrix $\text{diag}(\bm{Y})$ is a short notation of $\text{diag}(\text{vec}(\bm{Y}))$ with $\text{vec}(\bm{Y})$ the vectorization of matrix $\bm{Y}$, and the matrix $\bm{E}_{ij}^+, \bm{E}_{ij}^-$ are 
\begin{align*}
	& \bm{E}_{ij}^+ := \begin{pmatrix}
		E_{ij}  &  0 \\
		0 & - \text{diag}(\text{vec}(E_{ij})) \\
	\end{pmatrix}, ~ \bm{E}_{ij}^- := \begin{pmatrix}
		- E_{ij}  &  0 \\
		0 & - \text{diag}(\text{vec}(E_{ij})) \\
	\end{pmatrix}, & \forall i,j \in [d] \times [d] 
\end{align*}
with $E_{ij} \in \mathbb{R}^{d \times d}$ the standard basis matrix (i.e., the component $(i,j)$ equals to 1, and the rest components equal to 0). Rewrite the variables of \ref{eq:SDP-format} into a p.s.d. matrix 
\begin{align*} 	
	\tilde{\bm{X}} := \begin{pmatrix}
		\bm{X} & & & & \\
		& \text{diag}(\bm{Y}) & & & \\
		& & \text{diag}(\bm{\gamma}^+) & & \\
		& & & \text{diag}(\bm{\gamma}^-) & \\
		& & & & \mu \\
	\end{pmatrix} \in \mathbb{R}^{(d + 3 d^2 + 2) \times (d + 3 d^2 + 2)}.
\end{align*}
For the step-(2), the \ref{eq:SDP-format} can be further transferred into the standard SDP format as follows:
\begin{align*}
	\min & ~ \langle - \bm{A} \oplus \bm{0}_{d^2} \oplus \bm{0}_{d^2} \oplus \bm{0}_{d^2} \oplus \bm{0}_{2} , \tilde{\bm{X}} \rangle \tag{standard-SDP} \label{eq:standard-SDP}  \\
	\text{s.t.} & ~  \langle \bm{I}_d \oplus \bm{0}_{d^2} \oplus \bm{0}_{d^2} \oplus \bm{0}_{d^2} \oplus \text{diag}(1,0), \tilde{\bm{X}} \rangle = 1 \\
	& ~ \langle \bm{0}_d \oplus \bm{I}_{d^2} \oplus \bm{0}_{d^2} \oplus \bm{0}_{d^2} \oplus \text{diag}(0,1), \tilde{\bm{X}} \rangle = k \\
	& ~ \langle (\bm{E}_{ij}^+ + \bm{E}_{ij}^+) \oplus (\text{diag}(\text{vec}(E_{ij})) + \text{diag}(\text{vec}(E_{ji}))) \oplus \bm{0}_{d^2} \oplus \bm{0}_2, \tilde{\bm{X}} \rangle = 0, ~ \forall i \geq j \\
	& ~ \langle (\bm{E}_{ij}^- + \bm{E}_{ij}^-) \oplus \bm{0}_{d^2} \oplus (\text{diag}(\text{vec}(E_{ij})) + \text{diag}(\text{vec}(E_{ji}))) \oplus \bm{0}_2, \tilde{\bm{X}} \rangle = 0, ~ \forall i \geq j \\
	& ~ \tilde{\bm{X}} \succeq \bm{0} 
\end{align*}
with the size of variable matrix $n = d + 3d^2 + 2$ and the number of linear constraints $m = 2 + d \times (d + 1)$. The code of DADAL method is downloaded from the author's \cite{de2018using} homepage  \footnote{\url{https://www.math.aau.at/or/Software/}}.
	\item \textbf{DADAL-SPCA:} A DADAL-SPCA method designed by us (which uses the main ideas of the DADAL method) works specifically for the sparse PCA problem. As we have seen above, using the standard code of DADAL involves increasing dimension to $(d + 3 d^2 + 2)^2$ which appears to be quiet inefficient for solving the standard SDP relaxation of sparse PCA. Therefore we alternatively pursued the following approach: Consider the primal and dual SDP relaxation of sparse PCA, 
		\begin{align*}
			& \begin{array}{rlll}
				\text{Primal} := \min_{\bm{X}, \bm{Y}} &
				~ \langle - \bm{A}, \bm{X} \rangle & \\
				\text{s.t.} & ~ \langle \bm{I}, \bm{X} \rangle \leq 1 & ~ (\mu_1 \geq 0) \\
				& ~ \langle \bm{1} \bm{1}^{\top}, \bm{Y} \rangle \leq k & ~ (\mu_2 \geq 0) \\
				& ~ \bm{Y} \geq \bm{X} & ~ (\bm{\gamma}^+ \geq 0) \\
				& ~ \bm{Y} \geq - \bm{X} & ~ (\bm{\gamma}^- \geq 0) \\
				& ~ \bm{X} \succeq \bm{0} & ~ (\bm{Z} \succeq \bm{0})
			\end{array} ~ & 
			\begin{array}{rllll}
			\text{Dual} := \max & ~ - \mu_1 - \mu_2 k \\
			\text{s.t.} & ~ \mu_1 \bm{I} + \bm{\gamma}^+ - \bm{\gamma}^- - \bm{A} - \bm{Z} = \bm{0} \\
			& ~ \mu_2 \bm{1} \bm{1}^{\top} - \bm{\gamma}^+ - \bm{\gamma}^- = \bm{0} \\
			& ~ \bm{Z} \succeq \bm{0} \\
			& ~ \mu_1, \mu_2, \bm{\gamma}^+, \bm{\gamma}^- \geq 0
			\end{array}
		\end{align*} 
		with its augmented Lagrangian 
		\begin{align*}
			\mathcal{L}_{\sigma}(\bm{\mu}, \bm{\gamma}, \bm{Z}; \bm{X}, \bm{Y}) := & ~ - \mu_1 - \mu_2 k + \langle \bm{M}_1, \bm{X} \rangle + \langle \bm{M}_2, \bm{Y} \rangle - \frac{\sigma}{2} \|\bm{M}_1\|_F^2 -  \frac{\sigma}{2} \|\bm{M}_2\|_F^2,
		\end{align*}
		where $\bm{M}_1, \bm{M}_2$ are defined as 
		\begin{align*}
			\bm{M}_1 := &~  \mu_1 \bm{I} + \bm{\gamma}^+ - \bm{\gamma}^- - \bm{A} - \bm{Z}, \\
			\bm{M}_2 := & ~ \mu_2 \bm{1} \bm{1}^{\top} -  \bm{\gamma}^+ -  \bm{\gamma}^-. 
		\end{align*}
		We initialize $\bm{X}^0, \bm{Y}^0, \bm{Z}^0$ as follows: Compute eigenvalue decomposition of $\bm{A} = \bm{V} \bm{\Lambda}_{\bm{A}} \bm{V}^{\top}$, let $\bm{v}_1$ be the leading eigenvector of $\bm{V}$ with respect to the largest eigenvalue. Set 
		\begin{align*}
		    \bm{X}^0 \gets & ~ \bm{v}_1 \bm{v}_1^{\top}, \\
		    \bm{Y}^0 \gets & ~ |\bm{X}^0|, \\
		    \bm{Z}^0 \gets & ~ \bm{0},
 		\end{align*}
 		along with the starting augmented Lagrangian parameter $\sigma^0$. 
		In $(k + 1)$-th iteration, update each variable based on the following rule which is similar as the DADAL method proposed in \cite{de2018using}.
		\begin{align*}
			\bm{\mu}^{k + 1}, \bm{\gamma}^{k + 1} \gets & ~ \argmax_{\bm{\mu} \geq 0, \bm{\gamma} \geq 0} \mathcal{L}_{\sigma^k} (\bm{\mu}, \bm{\gamma}, \bm{Z}^k; \bm{X}^k, \bm{Y}^k) \\
			\bm{Z}^{k + 1} \gets & ~ \left( - \frac{\bm{X}^k}{\sigma^k} + \bm{\mu}_1^{k + 1} \bm{I} + (\bm{\gamma}^+)^{k + 1} - (\bm{\gamma}^-)^{k + 1} - \bm{A} \right)_{\succeq 0} \\
			\bm{X}^{k + 1} \gets & ~ - \sigma \cdot \left( - \frac{\bm{X}^k}{\sigma^k} + \bm{\mu}_1^{k + 1} \bm{I} + (\bm{\gamma}^+)^{k + 1} - (\bm{\gamma}^-)^{k + 1} - \bm{A} \right)_{\preceq 0} \\
			\bm{Y}^{k + 1} \gets & ~ |\bm{X}^{k + 1}| \\
			\text{Update } \sigma & ~ \text{based on Algorithm 1 in \cite{de2018using}}
		\end{align*}
		where $(\bm{A})_{\succeq 0}, (\bm{A})_{\preceq 0}$ denote the positive semi-definite, negative semi-definite part of symmetric matrix $\bm{A}$. That is: Let $\bm{A} = \bm{U} \bm{\Sigma} \bm{U}^{\top}$ be its eigenvalue decomposition. Represent $\bm{\Sigma} = \bm{\Sigma}^+ + \bm{\Sigma}^-$ where $\bm{\Sigma}^+_{ii} = \max\{\bm{\Sigma}_{ii}, 0\}$ and $\bm{\Sigma}^-_{ii} = \min\{\bm{\Sigma}_{ii}, 0\}$, then 
		\begin{align*}
			(\bm{A})_{\succeq 0} := & ~ \bm{U} \bm{\Sigma}^+ \bm{U}^{\top}, \\
			(\bm{A})_{\preceq 0} := & ~ \bm{U} \bm{\Sigma}^- \bm{U}^{\top}. 
		\end{align*} 
		
	\begin{remark}
	The way we update our dual variables (and primal variables) in each iteration, there is no guarantee that the dual variables satisfy the equality constraints in the dual, namely,
    \begin{align*}
		\bm{M}_1 := &~  \mu_1 \bm{I} + \bm{\gamma}^+ - \bm{\gamma}^- - \bm{A} - \bm{Z} = 0, \\
		\bm{M}_2 := & ~ \mu_2 \bm{1} \bm{1}^{\top} -  \bm{\gamma}^+ -  \bm{\gamma}^- = 0. 
	\end{align*}
    Therefore, it is not true that we can always obtain exact dual bounds from every iteration. We store the dual bounds of iterations where the equality constraints are satisfied within a tolerance of $0.01$, i.e., 
    \begin{align*}
        \|\bm{M}_1\|_F + \|\bm{M}_2\|_F \leq 0.01.
    \end{align*}
    Moreover, after the final iteration, we add one more step by solving the following \emph{linear program}, 
    \begin{align*}
    \begin{array}{rll}
        \mu^{\mathrm{final}}, \bm{\gamma}^{\mathrm{final}} := \argmax_{\mu, \bm{\gamma}} & ~ - \mu_1 - \mu_2 k \\
			\text{s.t.} & ~ \mu_1 \bm{I} + \bm{\gamma}^+ - \bm{\gamma}^- - \bm{A} - \bm{Z}^{\mathrm{final}} = \bm{0}, \\
			& ~ \mu_2 \bm{1} \bm{1}^{\top} - \bm{\gamma}^+ - \bm{\gamma}^- = \bm{0}, \\
			& ~ \mu_1, \mu_2, \bm{\gamma}^+, \bm{\gamma}^- \geq 0,
	\end{array} \tag{final-dual} \label{eq:final-dual}
    \end{align*}
    where $\bm{Z}^{\mathrm{final}} \succeq 0$ is the dual variable obtained in the final step of DADAL-SPCA. It is easy to observe that $(\mu^{\mathrm{final}}, \bm{\gamma}^{\mathrm{final}}, \bm{Z}^{\mathrm{final}})$ is a dual feasible solution, and therefore a dual bound can be obtained from this dual feasible solution. 
    \end{remark}

	\emph{Stopping criteria:} The stopping criteria includes three conditions. Meeting any of the criteria stops the DADAL-SPCA algorithm. 
    \begin{enumerate}
        \item The maximum number of iteration is set to be $200$.
        \item The stopping criteria quantity $\delta$ proposed in  Algorithm 1 \cite{de2018using} is set to be 0.001, i.e., at the end of each iteration, we compute the primal and dual infeasibility errors as follows:
        \begin{align*}
            r_P := & ~ \frac{\max\{\text{Tr}(X) - 1, 0\} + \max\{ \langle \bm{1} \bm{1}^{\top}, \bm{Y} \rangle - k, 0\} }{1 + \sqrt{1 + k^2} }, \\
            r_D := & \frac{\|\bm{M}_1\|_F + \|\bm{M}_2\|_F}{1 + \|\bm{A}\|_F}, 
        \end{align*}
        and set $\delta := \max\{r_P, r_D\}$. 
        \item Since there is no closed form solution of the following updating step:
    \begin{align*}
        \bm{\mu}^{k + 1}, \bm{\gamma}^{k + 1} \gets & ~ \argmax_{\bm{\mu} \geq 0, \bm{\gamma} \geq 0} \mathcal{L}_{\sigma^k} (\bm{\mu}, \bm{\gamma}, \bm{Z}^k; \bm{X}^k, \bm{Y}^k),
    \end{align*}
    we use commercial solver Gurobi (called via Python) to solve this quadratic programming sub-problem in each iteration. For small instances (i.e., $d < 500$, Pitprops, Eisen-1, Eisen-2), the total time limit given for Gurobi solver is $3600$ seconds (1 hour); and for middle-size instance (i.e., $d = 500$, CovColon, Lymp), the total time limit given for Gurobi solver is $7200$ seconds (2 hours), and for large instance (i.e., $d = 2000$, Reddit), the total time limit given for Gurobi solver is $18000$ seconds (5 hours).
    \end{enumerate}
\end{enumerate}

Algorithm~\ref{algo:dual-bound-DADAL-SPCA} is the pseudocode of finding dual bounds using DADAL-SPCA.

\begin{algorithm}[h!]
\caption{Dual Bound DADAL-SPCA} \label{algo:dual-bound-DADAL-SPCA}
\begin{algorithmic}[1]
\State \emph{Input}: Covariance matrix $\bm{A}$, sparsity parameter $k$, maximum number of iteration $ T_{\max}$, total time limit for solver $T_{\text{total}}$, starting Lagrangian augmented parameter $\sigma^0$. 
\State \emph{Output}: Dual bound of sparse PCA. 
\Function{Dual Bound Method}{$\bm{A}, k, T_{\max}, T_{\text{total}}$} \label{function:dual-bound-DADAL-SPCA}
\State Compute eigenvalue decomposition on $\bm{A}$, let $\bm{v}_1$ be its leading eigenvector. 
\State Initialize $\bm{X} \gets \bm{v}_1 \bm{v}_1^{\top}, \bm{Y} \gets |\bm{X}|, \bm{Z} \gets \bm{0}^{d \times d}, (\mu_1, \mu_2) \gets (0,0), \bm{\gamma}^{\pm} \gets \bm{0}^{d \times d}$.
\State Run DADAL-SPCA with stopping criteria described above with starting Lagrangian augmented parameter $\sigma^0 \in \{0.001, 0.01, 0.1, 1\}$, and return $\text{UB}^{\text{DADAL-SPCA}}$.
\State Solve \ref{eq:final-dual} for a dual bound $\text{UB}^{\text{final-dual}}$.
\State \Return $\text{UB} \gets \min\{\text{UB}^{\text{final-dual}}, ~  \text{UB}^{\text{DADAL-SPCA}}\}$.
\EndFunction
\end{algorithmic}
\end{algorithm}

The gap obtained by DADAL-SPCA as described above with various values of $\sigma^0$ is reported in Table~\ref{tab:DADAL-SPCA-comparison}.

\begin{table}[!h]
\caption{DADAL-SPCA under different starting augmented Lagrangian parameter $\sigma^0$.}
\label{tab:DADAL-SPCA-comparison}
\vskip - 0.15in
\begin{center}
\begin{small}
\begin{sc}
\resizebox{ \textwidth}{!}{
\begin{tabular}{|l||c||c|c||c|c||c|c||c|c||c|c|c|}
\toprule

\multirow{2}*{Instance $\backslash ~ \sigma^0$} &
\multirow{2}*{LB} & 
\multicolumn{2}{|c||}{$\sigma^0 = 0.001$} &
\multicolumn{2}{|c||}{$\sigma^0 = 0.01$} & 
\multicolumn{2}{|c||}{$\sigma^0 = 0.1$} &
\multicolumn{2}{|c||}{$\sigma^0 = 1$} 
\\ \cline{3 - 10}

 & & gap $\%$ & Time & gap $\%$ & Time & gap $\%$ & Time & gap $\%$ & Time \\

\midrule

Pitprops $k = 5$ & 3.406 & 3.96 & 6 & 1.79 & 5 & 1.70 & 2 & \bf 1.64 & 3 \\

Eisen-1 $k = 10$ & 17.33 & 2.23 & 270 & \bf 2.19  & 225 & 11.07 & 294 & 39.10 & 288 \\

Eisen-2 $k = 10$ & 11.71 & 2.32 & 1053 & 2.37 & 610  & \bf 2.08 & 898  & 2.12 & 897  \\

CovColon $k = 10$ & 2641 & 14.16 & 7492 & \bf 13.51 & 7281 & 19.05  & 7369  & 26.82  & 7301  \\

Lymp $k = 10$ & 6008 & \bf 29.67 & 7339  & 34.79  & 7331  & 46.84  & 7367  & 59.09  & 7373  \\

Reddit $k = 10$ & 1052 & - & O.M. & - & O.M. & - & O.M. & - & O.M. \\
\bottomrule
\end{tabular}
}
\end{sc}
\end{small}
\end{center}
\vskip -0.1in
\end{table}
 The ``Time'' column in Table~\ref{tab:DADAL-SPCA-comparison} denotes the total running time used for the DADAL-SPCA method. We can see that the ``Time'' of CovColon, Lymp reported in Table~\ref{tab:DADAL-SPCA-comparison} are greater than time limit for solver, 
    since additional time are required to implement the other four updating steps in each iteration. The out of memory (O.M.) for Reddit instance is due to the memory limitation to load Reddit instance $d = 2000$ for the update step  
    \begin{align*}
        \bm{\mu}^{k + 1}, \bm{\gamma}^{k + 1} \gets & ~ \argmax_{\bm{\mu} \geq 0, \bm{\gamma} \geq 0} \mathcal{L}_{\sigma^k} (\bm{\mu}, \bm{\gamma}, \bm{Z}^k; \bm{X}^k, \bm{Y}^k).
    \end{align*}
    We tried to solve the  \ref{eq:final-dual} linear program for Reddit instance, but the LP did not solve in 5 hours. (This LP has order $d^2$ variables, whereas the number of variables of convex integer program is order $d I_{\text{pos}} N$ and $I_{\text{pos}} N \ll d$ in this instance.)

To complete the comparison, we also list the comparison between our model in paper and DADAL, DADAL-SPCA, Mosek in Table~\ref{tab:UB-comparison}. 

\begin{table}[!h]
\caption{Compare with existing SDP methods}
\label{tab:UB-comparison}
\vskip - 0.15in
\begin{center}
\begin{small}
\begin{sc}
\resizebox{1 \textwidth}{!}{
\begin{tabular}{|l||c||c|c||c|c||c|c||c|c|c|}
\toprule

\multirow{2}*{Instance} &
\multirow{2}*{LB} & 
\multicolumn{2}{|c||}{Model-in-Paper} & \multicolumn{2}{|c||}{DADAL \cite{de2018using}} & \multicolumn{2}{|c||}{DADAL-SPCA (best)} & \multicolumn{2}{|c|}{Mosek} \\ \cline{3 - 10}

 & & gap $\%$ & Time & gap $\%$ & Time & gap $\%$& Time & gap $\%$& Time \\

\midrule

Pitprops $k = 5$ & 3.406 & 3.26 & 0.4  & 82.43 & 593  & 1.64 & 3  & \bf 1.52 & 5 \\
Eisen-1 $k = 10$ & 17.33 & \bf 0.115 & 63  & - & O.M. & 2.19 & 225 & 2.19 & 15  \\
Eisen-2 $k = 10$ & 11.71 & \bf 1.71 & 385  & - & O.M. & 2.08 & 898 & 1.96 & 52 \\
CovColon $k = 10$ & 2641 & \bf 2.37 & 28  & - & O.M. & 13.51 & 7281  & - & O.M. \\
Lymp $k = 10$ & 6008 & \bf 17.86 & 4225  & - & O.M. & 29.67 & 7339 & - & O.M. \\
Reddit $k = 10$ & 1052 & \bf 2.24 & 8584  & - & O.M. & - & O.M. & - & O.M. \\

\bottomrule
\end{tabular}
}
\end{sc}
\end{small}
\end{center}
\vskip -0.1in
\end{table}

Based on Table~\ref{tab:UB-comparison}, we observe that the SDP-relaxation solved by Mosek produces the best bounds for the small instances (Pitprops, Eisen-1, Eisen-2), while DADAL-SPCA is able to produce bounds for Pitprops, Eisen-1, Eisen-2, CovColon, and Lymp. However, as we can see, except for Pitprops, the best dual bounds are obtained by solving convex IP model of this paper. 

}

\end{APPENDICES}

\end{document}